\numberwithin{equation}{section}
\newcommand{\dd}{\delta}
\newtheorem{thm}{Theorem}[section]
\newtheorem{coro}[thm]{Corollary}  
\newtheorem*{thm*}{Theorem}   
\newtheorem*{lemma*}{Lemma}
\newtheorem{prop}[thm]{Proposition}
\newtheorem{lemma}[thm]{Lemma}
\newtheorem{rem}[thm]{Remark}
\newtheorem{ex}[thm]{Exemple}
\newtheorem*{ex*}{Exemple \ref{exJ2} (continued)}
\newcommand{\Bl}{\operatorname{Bl}}
\newcommand{\coes}{\c c\~oes }
\renewcommand{\O}{{\mathcal O}}
\newcommand{\I}{{\mathcal I}}
\newcommand{\p}{{\mathbb P}}
\newcommand{\rk}{\operatorname{rk}}
\newcommand{\map}{\dasharrow}
\newcommand{\im}{\operatorname{Im}}
\renewcommand{\th}{\operatorname{th}}
\newcommand{\Str}{\operatorname{Str}}
\newcommand{\Conf}{\operatorname{Conf}}
\def\S{Section~}
\def\og{\leavevmode\raise.3ex\hbox{$\scriptscriptstyle\langle\!\langle$~}}
\def\fg{\leavevmode\raise.3ex\hbox{~$\!\scriptscriptstyle\,\rangle\!\rangle$}}
\def\ro[#1]{{\textcolor{black}{#1}}}
\def\ri[#1]{{\textcolor{green}{#1}}}
\begin{document}

\title[Varieties $n$--covered by curves of degree $\dd$]{On  projective varieties $n$--covered by  curves of degree $\dd$}  

\author[Luc Pirio]{Luc Pirio} 
\address{Luc PIRIO, IRMAR, UMR 6625 du CNRS, Universit\' e Rennes1, Campus de beaulieu, 
35000 Rennes, France}
\email{luc.pirio@univ-rennes1.fr}
\author[Francesco Russo]{Francesco Russo} 
\address{Francesco RUSSO, Dipartimento di Matematica e Informatica Universit\` a degli Studi di Catania 95125  Catania, Italy}
\email{frusso@dmi.unict.it}

\maketitle


\section*{Introduction}

The  theory of  rationally connected varieties
is quite recent and  was formalized in  \cite{campana}, \cite{KMM} and \cite{kollar}, although  these varieties
were intensively studied from different points of view by classical algebraic geometers, see for example \cite{darboux}, \cite{segre}, \cite{scorza}, \cite{bompiani}, \cite{bol}, \cite{lane}.\smallskip 

An important  result in this theory, see \cite[Theorem IV.3.9]{kollar},  asserts that through  $n$ general  points of a smooth rationally connected complex variety $X$ there passes  an irreducible rational  curve, which can be taken also to be  smooth as soon as $\dim(X)\geq 3$. From this one deduces that  for $\dim(X)\geq 3$ a fixed smooth curve of arbitrary genus can be embedded into $X$ in such a way that it passes through  $n$ arbitrary fixed general points.
 When a (rationally connected) variety  $X$ is embedded in some projective space $\p^N$ (or more generally when a polarization or an arbitrary Cartier divisor is fixed on $X$), one can consider the property of being  generically $n$-(rationally) connected by (rational) curves of a fixed degree $\dd$.

This stronger condition depends on the embedding, on the number $n\geq 2$, on the degree $\delta\geq 1$ and natural constraints  for the existence of such varieties immediately appear.
\smallskip

In this paper we shall study complex irreducible projective varieties   $X={X}(r+1, n,\delta)\subset\p^N$ of dimension $\dim(X)=r+1$ such that through $n\geq 2$ general points there passes an irreducible curve $C$ of degree $\dd\geq 1$ or more generally pairs $(X,D)$ with  $D$ a Cartier divisor
on a proper irreducible complex variety $X$ which is  $n$-covered by irreducible curves $C$ such that $(D\cdot C)=\delta\geq n-1$.
It is well known that  a $X(r+1,2,1)\subset\p^N$ is necessarily  a $\p^{r+1}$ linearly embedded in $\mathbb P^N$ and that a non-degenerate $X(r+1, n, n-1)\subset\p^{n+r-1}$ is a variety of minimal degree $n-1$.
The smooth $X(r+1, 2,2)$'s  were recently  classified in \cite{ionescurusso} (see also \cite{Paterno} for a generalization to the polarized case). 
Without some reasonable restrictions  the classification of varieties $X(r+1, n, \dd)\subset\p^N$ becomes immediately  extremely difficult and out of reach, especially for singular varieties.
\smallskip 

Recently (see \cite{trepreauW}, \cite{piriotrepreau} and also \cite{pereirapirio}), it has been realized that the study of these varieties   is also closely related to  an important question in web geometry, namely  the algebraization 
of webs of maximal rank. In order to solve this problem of web geometry,  it was proved in  \cite{PT}
that the dimension of the linear span of such varieties satisfies the inequality 
\begin{equation}\label{bd}
\dim\big(\langle {X}(r+1,n,\dd)\rangle\big)  \leq \overline{\pi}(r,n,\delta)-1,
\end{equation}
 see \S 1. Here 
   $\overline{\pi}(r,n,\dd)={\pi}(r,n,d)$ with $d=\delta+r(n-1)+2$ where  
${\pi}\big(r,n,d\big)$ stands for  the  Castelnuovo-Harris  bound function  for the geometric genus of  nondegenerate irreducible $r$-dimensional projective varieties $Y\subset\mathbb P^{n+r-1}$ of degree $d$, see  \cite{harris} and  also Section \ref{castelnuovovar} for some  details
and  definitions. The bound \eqref{bd} is proved 
geometrically via  the iteration of projections from general osculating spaces to $X(r+1, n, \dd)$ determined
by the irreducible  curves of degree $\dd$ which $n$-cover the variety. This is  a classical tool used also by del Pezzo, Enriques and Castelnuovo to bound the dimension of linear systems on surfaces, see \cite{Ca}, \cite{Ca2}, \cite{En}, \cite[\S 7]{cilibertorusso} and  \cite{PT}. The above bound  also  reveals  a  connection
with  Castelnuovo theory of linear systems on curves and with the so called Castelnuovo varieties, 
see \cite{harris} and \cite{ciliberto}. 
From this point of view, non-degenerate varieties $X={X}(r+1,n,\dd)\subset\p^{
\overline{\pi}(r,n,\delta)-1}$, denoted from now on by  $X=\overline{X}(r+1, n, \delta)$,  are the extremal ones and they are subject to  very strong restrictions -- e. g. they are rational and through $n$ general points there passes a unique rational normal curve of degree $\dd$, see \cite{PT} and 
Theorem \ref{T:strongbompiani} and Theorem \ref{T:Siena} below.
  Due to these numerous geometrical properties, it is possible  in many cases to obtain a complete classification, see for example \cite{PT} or  Theorem \ref{T:strongbompiani}, Theorem \ref{smooth33} , Corollary \ref{class333}
and Corollary \ref{class433} below in this paper. \smallskip 

 Some basic results of \cite{PT} are generalized  here in Theorem \ref{T:Siena}. One proves  the bound $h^0(\O_X(D))\leq\overline\pi(r,n,\dd)$ for a Cartier divisor $D$
on a proper irreducible variety $X$ of dimension $r+1$,  $n$-covered by irreducible curves $C$ such that
$(D\cdot C)=\dd\geq n-1$. We also show that  equality holds  if and only if $\phi_{|D|}$ maps $X$ birationally onto a $\overline X(r+1,n,\dd)$. 
Furthermore, 
if $h^0(\O_X(D))=\overline\pi(r,n,\dd)$ then $X$ is rational and through $n\geq 2$ general points there passes
a unique smooth rational curve $C$ such that $(D\cdot C)=\dd$. 

Another consequence of the previous bound is that  under the same hypothesis we have  $D^{r+1}\leq\dd^{r+1}/(n-1)^r$  if   $D$ is nef,  see Theorem \ref{P:bounddeg}. This is  a generalization of a result usually attributed to Fano in the case  $n=2$, see for example  \cite[Proposition V.2.9]{kollar}.
\smallskip 

The classical roots of this type of results go back to C. Segre, \cite{segre},  who proved that  $\dim(\langle X(2,2,2)\rangle)\leq 5$ and that $\overline X(2,2,2)\subset\p^5$ is projectively equivalent to the Veronese surface. Bompiani
generalized this result
in \cite{bompiani} to  $\dim(\langle X(r+1, 2, \dd)\rangle)\leq { r+1+\dd\choose r+1}-1$ with equality holding if and only if  $\overline X(r+1, 2,\dd)$ is projectively equivalent to the $\dd$--Veronese embedding of $\p^{r+1}$,
see Theorem \ref{T:strongbompiani} here and also \cite{ionescuSiena} and \cite{trepreau}. 

A lot of examples of $\overline{X}(r+1, n,\delta)$ (for arbitrary $n\geq 2$, $r\geq 1$ and $\delta\geq n-1$) have been described in  \cite{PT} via the theory of Castelnuovo varieties and their construction will be briefly recalled in \S \ref{castelnuovovar}. The main result of \cite{PT}
ensures  that these examples {\it of Castelnuovo type} are the only ones except possibly when $n>2$, $r>1$ and $\dd=2n-3$.

Here we  consider in detail the last open case, that is the classification of  varieties $\overline{X}(r+1, n, 2n-3)$, especially for $n=3$.  We immediately point out that there are examples of $\overline{X}(r+1, n, 2n-3)$ of dimension at least 3 that are not of Castelnuovo type. For $n=3$ these varieties
 share very  special structures being related to the theory of cubic Jordan algebras, see \S \ref{S:JordanAlgebra}. Indeed interesting examples of $\overline X(r+1,3,3)\subset\p^{2r+3}$ are the so called  {\it twisted cubics over  complex Jordan algebras of rank 3},
 see \cite{mukai}. There is an infinite family of such varieties:  the Segre embeddings $\mathbb P^1\times Q^r\subset\p^{2r+3}$,  where $Q^r\subset\p^{r+1}$ is an irreducible hyperquadric, and also four smooth exceptional examples associated to the four  simple 
cubic Jordan algebras (these four varieties are also  known as {\it Lagrangian Grassmannians}) and other examples constructed by considering cubic Jordan algebras naturally arising from associative algebras, see \S \ref{S:JordanAlgebra}. For an arbitrary  $\overline X(r+1,3,3)\subset\p^{2r+3}$ we consider the birational projection onto $\p^{r+1}$ from a general tangent space. By studying the geometry
of this birational map  we are able to give an explicit parametrization of these varieties and also to associate  to them
a {\it quadro-quadric} Cremona transformation  from  the projectivization of a general affine tangent space onto a hyperplane in $\p^{r+1}$, see Theorem \ref{Cremonascroll}. From this unexpected connection we  deduce the classification
of arbitrary $\overline X(r+1, 3,3)$ for $r\leq 3$ (even if our method actually works also for $r=4$ or for bigger values of $r$), see Corollary \ref{class333} and Corollary \ref{class433}. As a consequence we also prove that the base locus of a quadro-quadric
Cremona transformation and the base locus of its inverse are projectively equivalent so that essentially these transformations
are involutions, Corollary \ref{involutions}, \ro[ a fact which seems to have been overlooked  as far as we know].
Moreover in Theorem \ref{smooth33} we provide the classification of all smooth $\overline X(r+1,3,3)$, showing that they  are either smooth rational normal scrolls (hence  of Castelnuovo type) or the Segre embeddings of $\p^1\times Q^r$ or one of the  four  Lagrangian Grassmannians. Our approach yields also a geometrical direct proof of the classification of all cubic Jordan algebras whose associated variety is smooth, showing that they are either simple (Lagrangian Grassmannian) or semi-simple ($\p^1\times Q^r$ with $Q^r$ smooth). 
\medskip

The paper is organized as follows. In \S \ref{S:Preliminaries and notations} we introduce some definitions and explain the notation. We also recall the main steps for the proof of the bound \eqref{bd}.
 In \S \ref{S:rationality},   a modern version of Bompiani's theorem \cite{bompiani} is proved, Theorem \ref{T:strongbompiani}. 
 Then, inductively via osculating projections 
 and the study of the rational map $\phi_{|D|}$ we  prove in Theorem \ref{T:Siena} the bound on the dimension of linear systems of Cartier divisors described above  and the consequences of the maximality of this dimension. 
In \S \ref{S:bounddegree} we deduce from the bound on the dimension of the linear span a general bound for the degree of nef divisors on varieties $n$-covered by irreducible curves.
 In \S \ref{S:examples},
after describing  the $\overline{X}(r+1, n, \dd)$ of Castelnuovo type in Section \ref{castelnuovovar},  
 we construct  new examples  of $\overline{X}(r+1, n, \dd)$ when $r>1$, $n>2$ and $\dd=2n-3$. In particular we describe in detail examples of $\overline{X}(r+1,3,3)$ of {\it Jordan type} in Section \ref{S:JordanAlgebra}.  In Section \ref{S:otherexamples} we present some examples of $\overline{X}(3,n, 2n-3)$ which  are not of Castelnuovo type for  $n=4,5,6$.
Section \ref{S:3-covered} concerns the classification of several classes of $\overline{X}(r+1,3,3)$
under different assumptions either on $r$,  Corollary \ref{class333} and Corollary \ref{class433},
 or on the smoothness of the variety, Theorem \ref{smooth33}. We also discuss some open problems pointing towards the equivalence of these apparently  unrelated objects: varieties $\overline X(r+1,3,3)$, quadro-quadric Cremona transformations of $\p^r$ and 
complex Jordan algebras of rank three. 
\smallskip

\hspace{-0.4cm}{\bf Acknowledgements.}
Both authors are grateful to Ciro Ciliberto for some discussions at different stages of the preparation of the paper and
for a lot of suggestions leading to an improvement of the exposition. The first author has considered the
problem studied here when developing a research in common with Jean--Marie Tr\' epreau. He 
learned a lot from the numerous discussions with him on this subject  during last years. The second author expresses his gratitude to Paltin Ionescu for a direct or indirect but undoubtedly rich intellectual exchange of ideas and points of view on some contents of the paper, for his interest in the results
and for a lot of remarks which improved the presentation.

\section{Preliminaries and notation}\label{S:Preliminaries and notations}

We shall consider irreducible varieties $X$ which are projective, or proper, over the complex field $\mathbb C$.

 We will use the following  notation: $r,n$ and $ \dd$ are positive integers such that $n-1\leq \dd$. 
 Then one defines $\rho=\big\lfloor  \frac{\dd}{n-1}\big\rfloor$ and $\epsilon=\dd-\rho (n-1)\in \{0,\ldots,n-2\}$. One also defines  $m=\epsilon+1>0$ and  $m'=n-1-m\geq 0$ so that  $m+m'=n-1$. Finally, for any integer  $k$, one sets  $k^+=\max\{ 0, k\}$. 

For classification results there is no restriction in supposing that
an irreducible variety $X\subset\p^N$  is non--degenerate. Otherwise   $\langle X \rangle \subset\p^N$ will denote  the {\it linear span of $X$ in $\p^N$}, that is the smallest linear subspace of $\p^N$ containing $X$. 
For computational reasons, when dealing with classification results,  we shall define $r$ such that $\dim(X)=r+1$ .

Let $x$ be a smooth point of $X$. For any $\ell\in \mathbb N$, we denote by $Osc^\ell_X(x)\subset \mathbb P^N$ the  {\it $\ell$th-order osculation space of $X$ at $x$}. If  $\psi:(\mathbb C^{r+1},0)\rightarrow  (X,x), u \mapsto \psi(u)$
 is a regular local parametrization of $X$ at $x$, then $Osc^\ell_X(x)$ can be defined as the projective subspace $\big\langle \partial^{|\alpha|}\psi(0)/\partial u^\alpha 
\, | \, \alpha\in \mathbb N^{r+1 },\, |\alpha|\leq \ell
\big\rangle
\subset  
\mathbb P^N$. This space can also be defined more abstractly as the 
linear subspace spanned by the $\ell$-th order infinitesimal neighborhood of $X$ at $x$ and also generalized to the case of arbitrary Cartier divisors on $X$.
Indeed, for every integer $\ell\in\mathbb N$, let $\mathcal P^\ell_X(D)$  denote the $\ell$-th principal part bundle (or $\ell$-th jet bundle) of  $\O_X(D)$. For every
linear subspace $V\subseteq H^0(X,\O_X(D))$ we have a natural homomorphism of sheaves
\begin{equation}\label{evjet}
\phi^\ell : V\otimes\O_X\to \mathcal P^\ell_X(D), 
\end{equation}
sending a section $s\in V$  to its $\ell$-th jet $\phi^{\ell}_x(s)$ evaluated at $x\in X$,
that is $\phi^{\ell}_x(s)$ is represented in local coordinates by the
Taylor expansion of $s$ at $x$, truncated after the order $\ell$. Taking a smooth point $x\in X\subset\p^N=\p(V)$ (Grothendieck's notation) and $\O_X(D)=\O_X(1)$, it is easily  verified that  $Osc_X^{\ell}(x)=\p(\im(\phi^{\ell}_x)).$
If $x\in X$ is a smooth point,  the previous definitions yield $\dim\, (Osc^\ell_X(x)) \leq 
\rk(\mathcal P^{\ell}_X)-1={r+1+\ell  \choose  r+1}-1$ and in general it is  expected that  equality holds at general points of $X\subset\p^N$
as soon as $N\geq {r+1+\ell  \choose  r+1}-1$. In this case, we shall say that the {\it osculation of order $\ell$ of $X$ at $x$ is regular}. 

A curve $C\subset \mathbb P^N$ is a {\it rational normal curve of degree $\dd$} if it is rational, smooth, of degree $\dd$ and its linear span in $\p^N$ has dimension $\dd$. In other terms: the restriction of $| \mathcal O_{\mathbb P^N}(1)|$ to $C$ is the complete linear system of degree $\dd$ on $C\simeq \mathbb P^1$. 

Let us define the following Castelnuovo-Harris function which  bounds the geometric genus of irreducible projective varieties, see Theorem \ref{P:boundCH} below:
\begin{equation}
\label{E:castelnuovoHarrisBound}
{\pi}(r,n,d)=\sum_{ \sigma \geq 0} {\sigma+r-1   \choose \sigma} \Big( d-(\sigma+r)\,(n-1)-1     \Big)^+.
\end{equation}

We will also use the following function that is closely related to ${\pi}(r,n,d)$: 

\begin{equation}
\label{F:defpibar}
\overline{\pi}(r,n,\delta):= m \, {r+\rho+1 \choose r+1 } +m'\, {r+\rho \choose r+1},
\end{equation}
  for $\delta\geq n-1$ fixed with $\rho=\lfloor \frac{\delta}{n-1} \rfloor$, $m=\dd-\rho\,(n-1)+1$ and  $m'=n-1-m$.

It is not difficult to prove that  $\overline{\pi}(r,n,\delta)=\pi(r,n,d)$, see \cite{PT} for details.

An irreducible projective variety $X\subset\p^N$ of dimension $r+1$ that is $n$-covered by a family of irreducible curves of degree $\dd$  will be denoted
by $X(r+1,n,\dd)\subset\p^N$. In most of the  cases we shall also assume that $X(r+1,n,\dd)\subset\p^N$ is  non-degenerate.
\bigskip

 For  reader's convenience we reproduce here some basic results of \cite{PT} on  varieties $X=X(r+1,n,\delta)\subset\p^N$.
For an irreducible curve $C \subset \mathbb P^{N}$  
 of degree $\dd$,  for non-negative integers  $a_1,\ldots,a_\kappa$, with $\kappa>0$ fixed and  such that $\sum_{i=1}^\kappa (a_i+1)\geq \dd+1$ and for $x_1,\ldots,x_\kappa\in C$ pairwise distinct smooth points,  one has:
\begin{equation}
\label{E:oscspanPn}
\big\langle C \big\rangle = \big\langle    
Osc^{a_i}_C(x_i) \, \big| \; i=1,\ldots, \kappa
\big\rangle
\end{equation}
(otherwise there would exist a hyperplane $H\subsetneq\langle C\rangle$ containing $\langle    
Osc^{a_i}_C(x_i) \, \big| \; i=1,\ldots, \kappa\rangle$ and $\dd=\deg(C)=\deg(H\cap C)\geq \sum_{i=1}^\kappa (a_i+1)$, contrary to our assumption).

Let $X=X(r+1,n,\dd)\subset\p^N$ and let $\Sigma$ be a fixed $n$-covering family of irreducible
curves of degree $\dd$ on $X$. 
 If  $x_1,\ldots,x_{n-1}$ are distinct general points on $X$ one can consider  the subfamily 
$ \Sigma_{x_1,\ldots,x_{n-1}}=\{ C \in \Sigma \, | \, x_i \in C \; \mbox{ for } \, i=1,\ldots,n-1\} $. 
Since $\Sigma$ is $n$-covering, the family $\Sigma_{x_1,\ldots,x_{n-1}}$ covers $X$ and we can also assume that the general curve in this family is non-singular at $x_1,\ldots, x_{n-1}$.
Let $\{a_1,\ldots,a_{n-1}\}$ be a set of  $n-1$  non-negative integers such that $\sum_{i=1}^{n-1} (a_i+1)\geq  \dd+1$. By  \eqref{E:oscspanPn} 
and since $Osc^{a_i}_C(x_i)\subset Osc^{a_i}_X(x_i)$ for every $i=1,\ldots,n-1$, it comes that 
$ \langle C \rangle \subseteq  \langle    
Osc^{a_i}_X(x_i) \, | \; i=1,\ldots,n-1 \rangle$
for a general 
$C\in \Sigma_{x_1,\ldots,x_{n-1}}$.
Since the elements of $\Sigma_{x_1,\ldots,x_{n-1}}$ cover $X$, one obtains 
\begin{equation}
\label{E:<X>}
 \langle X \rangle =  \big\langle    
Osc^{a_i}_X(x_i) \, \big| \; i=1,\ldots,n-1 \big\rangle .
\end{equation}
Therefore for these varieties we deduce that 
$
 {\dim}\,(\langle X \rangle) +1\leq \sum_{i=1}^{n-1} {r+1+a_i \choose r+1 } 
$. Taking $a_1=\cdots=a_m=\rho$ and $a_{m+1}=\cdots=a_{n-1}=\rho-1$  and recalling \eqref{F:defpibar}, 
we obtain  the following result for arbitrary $X=X(r+1,n,\dd)$, see \cite{PT}:
\begin{equation}
\label{F:bounddimINTRO}
{\dim}\, \big( \big\langle X \big\rangle  \big)  \leq 
\overline{\pi}\big(r,n,\delta\big)-1.
\end{equation}

Recall that for 
$\overline{\pi}(r,n,\delta)$ defined  in (\ref{F:defpibar}) we have $\overline{\pi}(r,n,\delta)=\pi(r,n,d)$, where the Castelnuovo-Harris bound $\pi(r,n,d)$ is defined in (\ref{E:castelnuovoHarrisBound}) and
where $d$ is defined  as a function of  $\dd$ by $d=\dd+r(n-1)+2$.
Since ${\rm dim}(Osc^\ell_X(x))\leq {r+1+\ell \choose r+1} -1$ for any point $x\in X$ and for any integer $\ell\in \mathbb N$,  we deduce an immediate consequence of \eqref{E:<X>} that for a non--degenerate   $X=X(r+1,n,\dd)\subset\p^{\overline{\pi}(r,n,\dd)-1}$ the following hold:
\begin{enumerate}
\item[{\rm(i)}] the osculation of order $\rho$ of $X$ at a general point $x\in X$ is regular, that is  
\begin{equation}\label{E:dimoscmax}
\dim  \big(Osc^\rho_X(x) \big)= {r+1+\rho \choose r+1} -1\,;
\end{equation}
\item[{\rm(ii)}] if $x_1,\ldots,x_{n-1}$  are general points of $X$, then
\begin{equation}
\label{E:decompoPpi2}
 \langle X \rangle =
\Big(\oplus_{i=1} ^m\,
Osc_{{X}}^{\rho}(x_i)\,\Big) \oplus 
\Big( \oplus_{j=1} ^{m'}\,
Osc_{{X}}^{\rho-1}(x_{m+j})\,\Big)={\mathbb P}^{\overline{\pi}(r,n,\dd)-1}.
\end{equation}
\end{enumerate}
\smallskip

From now on an irreducible non-degenerate projective variety $X=X(r+1,n,\dd)\subset\p^{\overline{\pi}(r,n,\dd)-1}$ 
will be denoted by $\overline X(r+1,n,\dd)\subset\p^{\overline{\pi}(r,n,\dd)-1}$ or simply by $\overline X(r+1,n,\dd)$. In the next sections we shall describe
the notable geometric properties of these varieties and of their covering families.

\section{Rationality of $\overline X(r+1,n,\dd)$ and of  the general curve of the  $n$-covering family}\label{S:rationality}

The following  simple remark, which is surely well known to the experts, will play a central role  several times in our analysis, see also  \cite[Lemma 2.2]{ionescuSiena} and \cite{PT} for  related statements. Since we were unable to find a precise reference for the generality needed, we also include a proof.
\medskip

\begin{lemma}
\label{L:Siena}
Let $\phi:X\map X'$ be a dominant rational map  between proper varieties of the same dimension,
let $\Sigma$ be an irreducible $n$-covering family of irreducible curves on $X$ and let $\Sigma'$
be the induced $n$-covering family on $X'$. If $X'$ is projective, if the restriction of $\phi$ to a general curve $C\in\Sigma$
induces a morphism birational onto its image and if through $n$-general points of $X'$ there passes a unique
curve $C'\in\Sigma'$, then the same is true for $\Sigma$ on $X$ and moreover $\phi$ is a birational
map.
\end{lemma}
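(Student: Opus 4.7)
The plan is to derive both conclusions from a single geometric picture: for $n$ general points $x_1,\dots,x_n$ on $X$, any curve of $\Sigma$ through them must land, via $\phi$, inside the unique curve $C'\in\Sigma'$ passing through the images $x'_i=\phi(x_i)$. To make this precise, I would first replace $X$ by a resolution of the indeterminacy of $\phi$, so that $\phi:X\to X'$ becomes a surjective morphism of varieties of the same dimension, hence generically finite of some degree $d\geq 1$. Pick $x_1,\dots,x_n\in X$ general; the $x'_i$ are then $n$ general points of $X'$, and the hypothesis on $\Sigma'$ gives a unique $C'\in\Sigma'$ through them. Any $C\in\Sigma_{x_1,\dots,x_n}$ is a general member of $\Sigma$, so $\phi|_C$ is birational onto its image; that image is an element of $\Sigma'$ containing all the $x'_i$, hence equals $C'$. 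Thus every such $C$ is an irreducible component of $\phi^{-1}(C')$.

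For the uniqueness in $\Sigma$, observe that $\phi^{-1}(C')$ is a $1$-dimensional closed subscheme of $X$ with only finitely many irreducible components. The point $x_1$, being general in $X$, lies on exactly one of these components; since every $C\in\Sigma_{x_1,\dots,x_n}$ is such a component and contains $x_1$, all these curves coincide. This yields the first assertion of the lemma.

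For the birationality of $\phi$, I would argue by a double count. Each component of $\phi^{-1}(C')$ lying in $\Sigma$ maps birationally onto $C'$; since the total degree of $\phi^{-1}(C')\to C'$ equals $d$, there are at most $d$ such components. On the other hand, the fiber $\phi^{-1}(x'_i)$ is finite of cardinality $d$ for each $i$, so there are $d^n$ generic $n$-tuples $(y_1,\dots,y_n)\in X^n$ with $\phi(y_i)=x'_i$. Each such lift consists of $n$ general points of $X$, so by the uniqueness just proved it lies on a unique curve of $\Sigma$, which in turn maps to $C'$ and so is a component of $\phi^{-1}(C')$. But any single component $C_*$ of $\phi^{-1}(C')$ belonging to $\Sigma$ meets each $\phi^{-1}(x'_i)$ in only one point (because $C_*\to C'$ is birational) and therefore yields only one such lift. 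Comparing counts, $d^n\leq d$, and since $n\geq 2$ we conclude $d=1$, so $\phi$ is birational.

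The delicate step is the last degree count: one must verify that the components of $\phi^{-1}(C')$ which enter the argument genuinely belong to $\Sigma$ (so as to contribute degree one), and this relies on the genericity of $C'$ together with the fact that $\Sigma$ itself is already $n$-covering on $X$. Everything else reduces to standard dimension and fiber-cardinality considerations for generically finite morphisms.
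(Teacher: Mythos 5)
Your proof is correct, and for the uniqueness half it runs on the same engine as the paper's: resolve indeterminacies so $\phi$ becomes a generically finite morphism of degree $d$, note that every curve of $\Sigma$ through $n$ general points maps onto the unique $C'$, and use the fact that $\phi$ is a local isomorphism at the general point $x_1$ to force all such curves to coincide. One caveat on your phrasing: ``$x_1$, being general in $X$, lies on exactly one component of $\phi^{-1}(C')$'' cannot be justified by genericity of $x_1$ alone, since $C'$ varies with the chosen points; the correct justification (and the one the paper uses for its ``$s=1$'' step) is that $\phi$ is \'etale near $x_1$ and $x_1'$ is a smooth point of the irreducible curve $C'$, so the germ of $\phi^{-1}(C')$ at $x_1$ is a single branch. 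Where you genuinely diverge is the proof that $d=1$. The paper fixes $x_1,\ldots,x_{n-1}$, takes a general $x'\in X'$ with preimages $\widetilde x_1,\ldots,\widetilde x_d$, and shows the $d$ curves through $x_1,\ldots,x_{n-1},\widetilde x_i$ all coincide with a single general $C\in\Sigma$ (they all contain $x_1$ and all map to $C'$); since $\phi|_C$ is birational onto $C'$ and $C$ contains all $d$ points over $x'$, one gets $d=1$ at once. You instead make a global count, $d^n$ lifts of $(x_1',\ldots,x_n')$ against at most $d$ degree-one components of $\phi^{-1}(C')$, yielding $d^n\le d$. Both work; the paper's route is more local and sidesteps the need to verify that all $d^n$ lifted $n$-tuples are general in $X^n$ (which does hold, because the bad locus in $X^n$ has image of strictly smaller dimension under the generically finite map $\phi^{\times n}$, so a general point of $(X')^n$ has all its preimages in the good locus), whereas your count makes visible exactly how uniqueness would overdetermine $\phi^{-1}(C')$ if $d>1$.
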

\begin{proof} There exists a desingularization $\alpha:\widetilde X\to X$ with $\widetilde X$ projective and a morphism $\widetilde
\phi:\widetilde X\to X'$ solving the indeterminacies of $\phi$.  Thus without loss of generality we can assume $X$ smooth and projective, that $\phi$ is a morphism and that $\phi$ restricted to a general $C\in\Sigma$
is a morphism birational onto its image. The morphism $\phi$ is generically \' etale by generic smoothness, i.e. there exists an open set $U'\subseteq X'$ such that letting $U=\phi^{-1}(U')$, then $\phi_{|U}:U\to U'$ is an \' etale morphism. Let $d=\deg(\phi)=\deg(\phi_{|U})\geq 1$. We shall prove that $d=1$.

Fix $x_1,\ldots, x_{n-1}\in U$ general points. There exists an open subset $U_1\subseteq U$ such that
for every $x\in U_1$,  there passes exactly $s\geq 1$ curves in $\Sigma$ through  $x_1,\ldots, x_{n-1}$ and $x$.
Since $\phi$ is a proper morphism we can also take $U_1=\phi^{-1}(U'_1)$ with $U'_1\subseteq U'$ open.
Let $x'\in U'_1$ be a (general) point and let $\phi^{-1}(x')=\{\widetilde x_1,\ldots, \widetilde x_d\}$. Let $x'_l=\phi(x_l)$, 
$l=1,\ldots, n-1$,  and let  $\widetilde C_{i,1},\ldots, \widetilde C_{i,s}$ be the curves of $\Sigma$ passing through $x_1,\ldots, x_{n-1}, \widetilde x_i$. Then for every $j=1,\ldots, s$,
the curves $\phi(\widetilde C_{i,j})$ belong to $\Sigma'$ and  they pass through $x'_1,\ldots,x'_{n-1},x'$
so that   they coincide with the unique curve  $C'\in \Sigma'$ having this property. Since $\phi$ is a local isomorphism near $\widetilde x_i$, we deduce  $s=1$. In particular  through $n$ general points of $X$ there passes a unique curve belonging to  $\Sigma$.

Then $\widetilde x_i\in \widetilde C_i$, $x_1\in \widetilde C_i$ and $\phi(\widetilde C_i)=C'$ for every $i=1,\ldots, d$. Since $\phi$ is also a local isomorphism at $x_1$ since $x_1\in U$, we see that $\widetilde C_1=\ldots=\widetilde C_d=C$. Since $C\in \Sigma$ is general,  by hypothesis ${\varphi=\phi_{|C}}:C\to C'$ is a morphism birational onto its image, yielding  $d=1$ because $\widetilde x_i\in C$, $\phi(\widetilde x_i)=x'$ for every $i=1,\ldots, d$, and by the generality of $x'$ we can also suppose that ${\varphi^{-1}}(x')$ consists only of a point.
\end{proof}

We begin to study general properties of $\overline X(r+1,n,\dd)$'s starting  from the case $n=2$. This case was classically considered by Bompiani in \cite{bompiani}, where  the proof was essentially provided for surfaces. Under the assumption that  the general 2-covering curve is  smooth and rational, this  result was  also obtained by Ionescu in \cite[Theorem 2.8]{ionescuSiena}. A  similar but stronger  result holding in the  analytic category and
not only for complex  algebraic varieties has been proved recently by Tr\' epreau in \cite{trepreau}\footnote{Tr\'epreau's version of Bompiani's theorem is the following: {\it let $(X,x)\subset \mathbb P^N$ be a smooth germ of  a $(r+1)$-dimensional analytic variety with regular $\dd$th order osculation (at $x$).   Assume that $X$ is 
1-covered by (germs of)  rational curves of degree $\dd$ passing through $x$ that are (generically) smooth at this point. Then  $X\subset {\nu}_\dd(\mathbb P^{r+1})$}.}. 
\medskip

\begin{thm}
\label{T:strongbompiani}
 An  irreducible projective variety $X=\overline{X}(r+1,2,\dd)\subset\p^{\binom{r+1+\dd}{r+1}-1}$ is projectively equivalent to the Veronese manifold $\nu_\dd(\mathbb P^{r+1})$.
In particular every curve in  the $2$-covering family  is a rational normal curve of degree $\dd$ in the given embedding and there exists a unique such curve passing through two  points of $X$.
\end{thm}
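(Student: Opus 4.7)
For $\delta=1$ the statement reduces to the elementary fact recalled in the introduction that $\overline X(r+1,2,1)$ is a linearly embedded $\mathbb{P}^{r+1}$, so I assume $\delta\geq 2$. Specializing \eqref{E:decompoPpi2} to $n=2$ (whence $\rho=\delta$, $m=1$, $m'=0$) gives $\langle X\rangle=Osc_X^{\delta}(x)=\mathbb{P}^N$ at a general $x\in X$, so the $\delta$-th osculation of $X$ at $x$ is regular of maximal dimension $N=\binom{r+1+\delta}{r+1}-1$. Choosing a regular local parametrization $\widehat\psi:(\mathbb{C}^{r+1},0)\to\mathbb{C}^{N+1}$ lifting $\psi$, the vectors $\partial^\alpha\widehat\psi(0)$ for $|\alpha|\leq\delta$ form a basis of $\mathbb{C}^{N+1}$; after a projective change of coordinates on $\mathbb{P}^N$ I normalize $\widehat\psi(u)=\sum_{|\alpha|\leq\delta}u^\alpha e_\alpha+O(|u|^{\delta+1})$, so that the $\delta$-jet of $X$ at $x$ coincides, in these coordinates, with that of $V:=\nu_\delta(\mathbb{P}^{r+1})$.

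Next I would show that a general curve $C$ of the $2$-covering family is a rational normal curve of degree $\delta$. Since $\deg C=\delta$ one has $\dim\langle C\rangle\leq\delta$. Take a general $x\in C$ and parametrize $C$ locally by $t\mapsto\widehat\psi(\gamma(t))$ with $\gamma'(0)=v\in T_xX\setminus\{0\}$. A direct computation via the chain rule, together with the normalization above, yields for $0\leq k\leq\delta$
\[
(\widehat\psi\circ\gamma)^{(k)}(0)=k!\sum_{|\alpha|=k}v^\alpha e_\alpha+(\text{element of }Osc_X^{k-1}(x)).
\]
The leading terms sit in the mutually independent homogeneous components $\langle e_\alpha:|\alpha|=k\rangle$ and are therefore linearly independent across $k=1,\dots,\delta$, so $\dim Osc_C^\delta(x)=\delta$. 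Combined with $Osc_C^\delta(x)=\langle C\rangle$ from \eqref{E:oscspanPn}, this gives $\dim\langle C\rangle=\delta$, whence $C$ is a rational normal curve of degree $\delta$.

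To conclude, I identify $X$ with $V$. The rational normal curves of the covering family through $x$ are parametrized up to a finite correspondence by their tangent directions $v\in\mathbb{P}(T_xX)\cong\mathbb{P}^r$; choosing a $\mathbb{P}^1$-parametrization of each with $t=0$ at $x$ produces a dominant rational map $\mathbb{P}^r\times\mathbb{P}^1\dashrightarrow X$ contracting $\mathbb{P}^r\times\{0\}$ to $x$, which descends to a birational map $\Phi:\mathbb{P}^{r+1}\dashrightarrow X$. The composition $\widetilde\Phi:\mathbb{P}^{r+1}\dashrightarrow\mathbb{P}^N$ of $\Phi$ with $X\hookrightarrow\mathbb{P}^N$ is defined by $N+1$ homogeneous polynomials of some common degree $d$; by the preceding paragraph, the restriction of $\widetilde\Phi$ to a general line through the base point is a rational normal curve of degree $\delta$, which forces $d=\delta$. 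Thus $\widetilde\Phi$ is defined by a linear subsystem of $|\mathcal{O}_{\mathbb{P}^{r+1}}(\delta)|$, and since $\widetilde\Phi(\mathbb{P}^{r+1})=X$ spans $\mathbb{P}^N$ of dimension $N=\dim|\mathcal{O}_{\mathbb{P}^{r+1}}(\delta)|$, this subsystem is complete. After a further projective transformation of $\mathbb{P}^N$ one therefore has $\widetilde\Phi=\nu_\delta$ and hence $X=\nu_\delta(\mathbb{P}^{r+1})$; the uniqueness of the $2$-covering curve through two general points follows since lines in $\mathbb{P}^{r+1}$ through two general points are unique, and Lemma~\ref{L:Siena} can be invoked to handle the birational step rigorously.

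The main obstacle lies in this identification: passing from the local $\delta$-jet agreement with $V$ at $x$ to the global equality $X=V$. The crux is verifying that the natural birational parametrization $\widetilde\Phi$ is given by a linear system of polynomials of degree exactly $\delta$, which requires careful handling of the indeterminacies of $\Phi$ at $x$ and the common-factor analysis on a general line through the base point. The rational normality of every covering curve through $x$ (established in the second paragraph), together with the matching of dimensions $\dim\langle X\rangle=\binom{r+1+\delta}{r+1}-1=\dim|\mathcal{O}_{\mathbb{P}^{r+1}}(\delta)|$ supplied by the hypothesis, is precisely what makes this work.
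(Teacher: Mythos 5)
Your first two paragraphs are sound: the reduction to a regular $\delta$-th order osculation at a general point, and the jet computation showing that the vectors $(\widehat\psi\circ\gamma)^{(k)}(0)$, $k=0,\dots,\delta$, are independent, so that $\dim\langle C\rangle=\delta$ and a general covering curve is a rational normal curve, are correct; this is a slightly more direct route to the rational normality of the curves than the paper's, which obtains it by projecting from $Osc_X^{\delta-1}(x)$ and invoking Lemma \ref{L:charRN}. The genuine gap is in your third paragraph, exactly at the point you yourself flag as ``the main obstacle''. First, the claim that the curves of $\Sigma_x$ are parametrized ``up to a finite correspondence'' by their tangent directions in $\p(T_xX)$ is unproved (neither dominance nor generic finiteness of the tangent-direction map is clear a priori), and even granting it, a finite correspondence of degree greater than one between $\p^r$ and the parameter space does not produce a rational --- let alone birational --- map $\Phi:\p^{r+1}\map X$: you would need the parameter space of curves through $x$ to be rational and the evaluation map to be birational, and these are precisely the hard points of the theorem. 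Second, your appeal to Lemma \ref{L:Siena} goes in the wrong direction: that lemma yields birationality of a map $X\map X'$ only when uniqueness of the covering curve through $n$ general points is already known on the \emph{target} $X'$; you have no map from $X$ to such a target, while uniqueness on $X$ itself is part of what is to be proved. Third, even with a birational $\widetilde\Phi$ in hand, $d=\delta$ does not follow from the fact that general lines through $o$ map to degree-$\delta$ curves: one only gets $d=\delta+(\mbox{contribution of the base scheme of }\widetilde\Phi\mbox{ along a general line through } o)$, and eliminating a possible fixed divisor and the multiplicity at $o$ is exactly the ``common-factor analysis'' that you defer rather than carry out.

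The paper closes these gaps with an idea for which your proposal has no substitute. For each general $x$ it extracts from the osculating projection $p_T$, $T=Osc_X^{\delta-1}(x)$, a linear system $|\overline D_x|$ of dimension at least $r$, disjoint from a general covering curve, with $(\overline D_x\cdot C)=1$ for general $C\in\Sigma$ and $(\overline D_x\cdot C_x)=0$ for curves through $x$; comparing two general points $x,x'$ and using rational connectedness, these systems glue into a single $|D|$ of dimension at least $r+1$ with $(D\cdot C)=1$, so that $\psi_{|D|}$ maps $X$ onto $\p^{r+1}$ sending general covering curves to lines. Now Lemma \ref{L:Siena} applies in the correct direction (lines through two general points of $\p^{r+1}$ are unique) and gives birationality and uniqueness at once; the completeness of the degree-$\delta$ system defining the inverse then follows from the dimension count. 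To salvage your argument you would need to supply a construction of comparable strength producing a birational map $X\map\p^{r+1}$, not just a correspondence from $\p^{r+1}$ to $X$.
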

\begin{proof} By definition $\rho=\dd$ so that by  \eqref{E:dimoscmax}, for $x\in X$ general we have  $$\dim(Osc_X^{\dd-1}(x))={r+\dd \choose r+1} -1
\qquad \text{ and } \qquad 
 Osc_X^{\dd}(x)=\p^{\binom{r+1+\dd}{r+1}-1}\,.$$ 
Now  let $x\in X$ be a fixed general point and let $T= Osc_X^{\dd-1}(x)=\p^{{r+\dd \choose r+1} -1}$. 
Let $p_T:X\map \p^{\overline \pi(r-1,2,\dd)-1}$ be the restriction to $X$ of the projection
from $T$. The rational map $p_T$ is given by the linear system $|D_x|$ cut on $X$ by hyperplanes containing $T$ so that the corresponding hyperplane
sections have a point of multiplicity $\dd$ at $x\in X$. A general irreducible  curve of degree $\dd$ passing through $x$ is  thus contracted by $p_T$. Let  $X_T=\overline{p_T(X)}$.  We have $\langle X_T\rangle=\p^{\overline \pi(r-1,2,\dd)-1}$ since  $\langle X\rangle=\p^{\overline \pi(r,2,\dd)-1}$. 

We claim that $X_T$ is projectively equivalent to $\nu_\dd(\p^r)\subset\p^{\overline\pi(r-1,2,\dd)-1}$.  
Indeed, let $\pi:\Bl_x (X)\to X$ be the blow-up of $X$ at $x$, let $E=\p^r$ be the exceptional divisor and let $p'_T=p_T\circ\pi:\Bl_x(X)\map X_T$ be
the induced rational map. The restriction of $p'_T$ to $E$ is a rational dominant map from $\p^r$ to $X_T\subset\p^{\overline\pi(r-1,2,\dd)-1}$ given by a sublinear system of $|\O_{\p^r}(\dd)|$ of dimension $\overline \pi(r+1,2,\dd)-1$ so that it is given by the complete linear system $|\O_{\p^r}(\dd)|$
(since $Osc_X^\delta(x)=\p^{\overline\pi(r,2,\dd)-1}$, the restriction of the strict transform of the linear system of  hyperplane sections containing $Osc_X^{\dd-1}(x)$ to $E$ is not zero). Thus the restriction
of $p'_T$ to $E$ induces an isomorphism between $E$ and $X_T$ given by $|\mathcal O_{\p^r}(\dd)|$, proving the claim. Moreover since  a general curve  $C\in\Sigma$ is not contracted by $p_T$, we have that $p_T(C)$ is a curve on $X_T$ of degree $\dd'\leq \dd$. Thus $p_T(C)$ is a smooth rational
curve of degree $\dd$,  $T\cap C=\emptyset$,  the rational map  $p_T$ is defined along  $C$ and it gives  an isomorphism between $C$ and $p_T(C)$.

By solving the indeterminacies of $p'_T$, we can suppose that there exists a smooth
variety $\widetilde X$, a birational morphism $\phi:\widetilde X\to X$ and a morphism $\widetilde p_T:\widetilde X\to X_T\simeq\nu_\dd(\p^r)$ such that $p_T\circ\phi=\widetilde p_T$. Let $\phi^*(|D_x|)=F_x+|\widetilde D_x|$ with $|\widetilde D_x|$ base point free and let 
$|\overline D_x|=\widetilde p_T^*(|\O_{\p^r}(1)|)$. Then $\widetilde D_x\sim \dd \overline D_x$ and $\dim(|\overline D_x|)\geq r$. Moreover for the strict transform of a general curve $C_x$  in $\Sigma$ passing through $x$ we have 
$(\widetilde D_x\cdot C_x)=0$ and $(F_x\cdot C_x)=\dd$ while for the strict transform of a general curve $C\in \Sigma$ we have $(\widetilde D_x\cdot C)=\dd$. Thus $(\overline D_x\cdot C_x)=0$ and
$(\overline D_x\cdot C)=1$. 

Letting $T'=Osc_X^{\dd-1}(x')$ with $x'\in X$ general and performing the same analysis we can also suppose that on $\widetilde X$ the rational map $p_{T'}\circ\phi=\widetilde p_{T'}$ is defined and that
there exists a linear system $|\overline D_{x'}|$ such that $\dim(|\overline D_{x'}|)\geq r$, 
$(\overline D_{x'}\cdot C_{x'})=0$ for general $C_{x'} \in \Sigma_{x'} $ and  $(\overline D_{x'}\cdot C)=1$ for  general $C\in \Sigma$. Since a general $C_{x'}$ in  $ \Sigma_{x'} $ does not pass
through $x$ we have $|\overline D_x|\neq |\overline D_{x'}|$. On the other hand for $x\in X$ general, the linear systems
$|\overline D_x|$ vary in the same linear system $|D|$ on $\widetilde X$ since $\widetilde X$ is rationally connected. 

Thus $\dim(|D|)\geq r+1$,  $(D\cdot C)=1$ for the strict transform of a general curve $C$ in $\Sigma$
and $C$ does not intersect the base locus of $|D|$ by the previous analysis. Let $s+1=\dim(|D|)$ and let  $\psi=\psi_{|D|}:\widetilde X\map \widetilde X'\subseteq\p^{s+1}$ be the associated rational  map. Since $\psi(C)$ is a line passing through two general points of $X'$, we deduce  $X'=\p^{s+1}$  and $r=s$. Moreover by Lemma \ref{L:Siena} the rational map $\psi$ is birational.  Hence  there exists a birational map $\varphi=\phi\circ\psi^{-1}: \mathbb P^{r+1} \dashrightarrow X$ sending a general line in $\p^{r+1}$ onto
a general curve of degree $\dd$ in $\Sigma$. Composing
$\varphi$ with the inclusion $ X \subset \mathbb P^{\overline \pi(r,2,\dd)-1}$ 
 we get a birational map from $\mathbb P^{r+1}$  given by a
sublinear system of $ | \mathcal O_{\mathbb P^{r+1}}(\dd)|$ of dimension
$  {r+1+\dd \choose r+1}-1$, that is $\varphi$ is  given by the complete linear
system  $ | \mathcal O_{\mathbb P^{r+1}}(\dd)|$. In conclusion $X\subset\p^{\overline \pi(r,2,\dd)-1}$
 is projectively equivalent to the Veronese manifold $\nu_\dd(\mathbb P^{r+1})$.
\end{proof}
\medskip

The rationality and the smoothness of the general member $\Sigma$ of the 2-covering family of a $\overline X(r+1,2,\dd)$ can also be deduced differently. Indeed
in the previous proof we saw that  the linear system of hyperplane sections having a point
of multiplicity greater than or equal to $\dd$  at a general $x\in X$ cuts a general $C\in\Sigma_ x$ in the Cartier divisor $\dd x$. By varying $x$ on $C$ we see that
this property holds for the general point of $C$. Thus the smoothness and rationality of a  general
element of $\Sigma$ are consequences
of the following classical and surely well known result, which seems to go back  to Veronese, \cite{veronese},  at least in the projective version. The proof is well known and left to the reader.

\begin{lemma}
\label{L:charRN}
Let $C$ be an irreducible projective curve. Then:
\begin{enumerate}

\item if $C\subset\p^N$ is  non-degenerate and of  degree $\dd$, then $N\leq\dd$ and the  following conditions are equivalent:
\begin{enumerate}
\item $N=\dd$ and $C\subset\p^{\dd}$ is a rational normal curve of degree $\dd$;

\item for a general $x\in C$ there exists a hyperplane $H_x\subset\p^N$ such that $H_x\cap X=\dd\cdot x$ as schemes.
\end{enumerate}
\medskip

\item The following conditions are equivalent:
\begin{enumerate}
\item[(a')] $C$ is a smooth rational curve;

\item[(b')] there exists a Cartier divisor $D$  of degree $\dd\geq 1$ on $C$ such that $\dim(|D|)=\dd$.

\item[(c')] $\O_C(\dd\cdot x_1)\simeq\O_C(\dd\cdot x_2)$  for some $\dd\geq 1$ and for $x_1,x_2\in C$ general points.

\end{enumerate}
\end{enumerate}
\end{lemma}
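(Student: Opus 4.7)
For part~(1) I would first dispense with the bound $N\leq\dd$ classically, by iteratively projecting $C$ from a smooth point of itself, which reduces both $N$ and $\dd$ by one. The implication (a)$\Rightarrow$(b) is then immediate: for $C=\nu_\dd(\p^1)\subset\p^\dd$ and $x=\nu_\dd(t_0)$, the section $(t-t_0)^\dd\in H^0(\O_{\p^1}(\dd))$ defines a hyperplane $H_x$ with $H_x\cap C=\dd\cdot x$ as schemes. The heart of the lemma is (b)$\Rightarrow$(a), and my strategy will be an Abel--Jacobi argument on the normalization $\nu:\tilde C\to C$, reused later for (c')$\Rightarrow$(b').

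For (b)$\Rightarrow$(a), set $L:=\nu^*\O_C(1)$, of degree $\dd$. The defining section of $H_x$ pulls back to a section of $L$ vanishing to order $\dd$ at $\tilde x:=\nu^{-1}(x)$, so that $L\cong\O_{\tilde C}(\dd\tilde x)$ for general $\tilde x$. The key step is to show that the morphism $\tilde C\to\Pic^\dd(\tilde C)$, $y\mapsto\O(\dd y)$, being constant on a dense open (hence identically), forces $g(\tilde C)=0$: for $g\geq 1$, Abel--Jacobi $\tilde C\hookrightarrow J(\tilde C)$ is a closed immersion and the multiplication-by-$\dd$ isogeny on $J(\tilde C)$ has finite kernel, so their composition cannot contract a positive-dimensional image to a point. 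Once $\tilde C\cong\p^1$ and $L\cong\O_{\p^1}(\dd)$, the pullbacks of sections of $\O_C(1)$ have the form $c_x(t-t_x)^\dd$ and projectively trace out the non-degenerate rational normal curve in $\p^\dd$; hence the image of $H^0(C,\O_C(1))\hookrightarrow H^0(L)$ is the full $(\dd+1)$-dimensional space. This forces $N=\dd$ and identifies $\tilde C\to C\hookrightarrow\p^\dd$ with the complete Veronese embedding $\nu_\dd$, so $C=\nu_\dd(\p^1)$.

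For part~(2), (a')$\Rightarrow$(b') and (a')$\Rightarrow$(c') are trivial from $h^0(\p^1,\O(\dd))=\dd+1$ and the fact that effective divisors of a given degree on $\p^1$ are linearly equivalent. For (b')$\Rightarrow$(a'), I will pull back the linear system to the normalization, where $h^0(\nu^*D)\geq h^0(\O_C(D))=\dd+1$, and then combine Clifford's inequality with Riemann--Roch to force $g(\tilde C)=0$ and $\nu^*D$ non-special. Once $\tilde C\cong\p^1$, $|\nu^*D|$ is base-point free, and the inclusion of local rings $\O_{C,p}\hookrightarrow\O_{\tilde C,\tilde p}$ transfers this property back to $|D|$ on $C$. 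Then $\phi_D:C\to\p^\dd$ is a morphism onto a non-degenerate curve of degree at most $\dd$, which by the minimality argument of part~(1) is a rational normal curve; the identity $\dd=\deg(\phi_D)\cdot\dd$ forces $\phi_D$ to be birational. Since $\p^1$ is smooth, the rational inverse $\phi_D^{-1}:\p^1\dashrightarrow C$ extends to a morphism, and both compositions with $\phi_D$ agree with the identity on dense open sets, hence everywhere by separatedness; therefore $\phi_D$ is an isomorphism and $C\cong\p^1$.

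The last implication (c')$\Rightarrow$(b') is a short reduction: setting $L=\O_C(\dd x_1)$, the Abel--Jacobi argument as above gives $\tilde C\cong\p^1$ and $\nu^*L\cong\O_{\p^1}(\dd)$, and the pullbacks $\nu^*s_x$ of the sections $s_x\in H^0(L)$ with divisor $\dd x$ span $H^0(\O_{\p^1}(\dd))$, forcing $h^0(L)=\dd+1$, i.e.\ (b'). The main technical hurdle throughout is the Abel--Jacobi step establishing $g(\tilde C)=0$; the remaining ingredients (the degree bound for non-degenerate projective curves, Clifford's theorem, base-point analysis, and the extension of rational maps from a smooth curve to a projective variety) are all standard.
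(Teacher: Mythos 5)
The paper gives no argument for this lemma at all --- it states that ``the proof is well known and left to the reader'' --- so there is nothing to compare your write-up against; what matters is only whether your blind proof is correct, and it is. The degree bound by iterated inner projection, the Abel--Jacobi/isogeny argument forcing $g(\tilde C)=0$ from the constancy of $y\mapsto\O_{\tilde C}(\dd y)$, the Clifford/Riemann--Roch dichotomy in (b')$\Rightarrow$(a'), and the transfer of base-point-freeness through $\O_{C,p}\hookrightarrow\O_{\tilde C,\tilde p}$ are all sound, and the spanning argument via the sections $(t-t_x)^\dd$ (whose projectivizations sweep out a non-degenerate rational normal curve in $\p\bigl(H^0(\O_{\p^1}(\dd))\bigr)$) correctly pins down both $N=\dd$ and the completeness of the embedding system. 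Two small remarks. First, in (b')$\Rightarrow$(a') you invoke ``the minimality argument of part (1)'' to identify the image of $\phi_D$ with a rational normal curve; strictly speaking part (1) only gives $N\leq\dd$, so you are tacitly using the classification of curves of minimal degree. You can avoid this entirely: once you know $\nu^*H^0(C,\O_C(D))$ has dimension $\dd+1=h^0(\O_{\p^1}(\dd))$, the composite $\p^1=\tilde C\to C\xrightarrow{\phi_D}\p^\dd$ is given by the \emph{complete} system $|\O_{\p^1}(\dd)|$, hence is the Veronese closed immersion, and the surjectivity of $\nu$ then forces $\nu$ to be an isomorphism directly. Second, your statement ``$L\cong\O_{\tilde C}(\dd\tilde x)$'' silently uses that a general $x$ is a smooth point of $C$, so that $\nu^{-1}(x)$ is a single reduced point and the scheme $\dd\cdot x$ pulls back to $\dd\tilde x$; this is fine but worth saying, since it is exactly the hypothesis that makes the Abel--Jacobi step apply on a dense open set.
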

\medskip

Via Lemma \ref{L:charRN} one could also prove differently Theorem \ref{T:strongbompiani} above following the steps of Mori's characterization of projective spaces given in \cite{Mori} because  in this case the family of smooth rational curves  $\Sigma_x$ is easily seen to be proper. 
\medskip

We now investigate the higher dimensional versions of  Lemma \ref{L:charRN} from  the point of view of
proper varieties $n$-covered by irreducible curves of degree $\dd$ with respect to some fixed divisor $D$, providing generalizations
of  \cite[Theorem 2.8]{ionescuSiena} and of  \cite[Theorem 4.4]{BBI}. Part (1) below has been obtained  in \cite{PT} while (i) is an abstract version of  \eqref{bd}. 
\medskip

\begin{thm}\label{T:Siena} Let $X$ be an irreducible proper variety of dimension $r+1$ and let $D$ be a Cartier divisor on $X$. Suppose that through $n\geq 2$ general points
of $X$ there passes an irreducible curve $C$ such that $(D\cdot C)=\dd\geq n-1$. Then:
\begin{enumerate}
\item[{\rm (i)}]   $h^0(X,\mathcal O_X(D))\leq\overline\pi(r, n, \dd)$;
\medskip
\item[{\rm (ii)}] Equality holds in (i)  if and only if $\phi_{|D|}$ maps $X$ birationally onto a $\overline X(r+1, n, \dd)\subseteq\p^{\overline\pi(r,n,\dd)-1}$.
In this case the  general deformation of $C$ does not intersect the indeterminacy locus of $\phi_{|D|}$.

\item [{\rm (iii)}] If equality holds in (i), then 
\begin{enumerate}
\item the variety $X$ is rational;

\item  the general deformation $\overline C$ of $C$ is a smooth rational curve and through $n$ general points of $X$ there passes a unique smooth rational curve $\overline C$ such that $(D\cdot \overline C)=\dd$. 
\end{enumerate}
\end{enumerate}

In particular:
\begin{enumerate}
\item a $\overline X(r+1, n,\dd)\subset\p^{\overline\pi(r,n,\dd)-1}$ is rational, the general curve of the
$n$-covering family is a rational normal curve of degree $\dd$ and through $n$ general points of $X$ there passes
a unique rational normal curve of degree $\dd$;

\item  a $\overline X(r+1,n,\dd)\subset\p^{\overline\pi(r,n,\dd)-1}$ is a linear birational projection of $\nu_{\dd}(\p^{r+1})$ {\rm (}or equivalently, a $\overline X(r+1, n, \dd)$ is the birational image of $\p^{r+1}$
given by a linear system of hypersurfaces of degree $\dd$ and dimension $\overline\pi(r,n,\dd)-1${\rm )}.
\end{enumerate}
\end{thm}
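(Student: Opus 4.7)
The plan is to derive (i) by applying the already-known projective bound \eqref{F:bounddimINTRO} to the image $X' := \overline{\phi_{|D|}(X)}$, and to derive (ii) and (iii) by combining this with Lemma \ref{L:Siena} and the rigidity of non-degenerate $\overline X(r+1,n,\dd) \subset \p^{\overline{\pi}(r,n,\dd)-1}$ asserted in consequence (1) (which is from \cite{PT}).

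For (i), I first replace $D$ by the moving part of $|D|$; this does not change $h^0(D)$, so I may assume $\Bs|D|$ has codimension at least two. A general curve $C$ of the covering family then avoids $\Bs|D|$, so the image $C' = \phi_{|D|}(C)$ is an irreducible curve satisfying $\deg(C') \cdot \deg(\phi_{|D|}|_C) \leq (D \cdot C) = \dd$. Setting $r'+1 := \dim X' \leq r+1$, the family of such $C'$ is an $n$-covering family of irreducible curves of degree $\dd' \leq \dd$ on the projective variety $X' \subset \p^{h^0(D)-1}$. Then the bound \eqref{F:bounddimINTRO} applied on $X'$, together with the monotonicity of $\overline{\pi}(\,\cdot\,,n,\,\cdot\,)$ in its two numeric arguments, yields
\[
h^0(D) - 1 \;=\; \dim\langle X' \rangle \;\leq\; \overline{\pi}(r', n, \dd') - 1 \;\leq\; \overline{\pi}(r, n, \dd) - 1.
\]

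Suppose now that equality holds in (i). Then every inequality in the previous display is an equality: $r' = r$, $\dd' = \dd$, $\phi_{|D|}|_C$ is birational onto $C'$, and $X'$ is a non-degenerate $\overline X(r+1, n, \dd) \subset \p^{\overline{\pi}(r,n,\dd)-1}$. Consequence (1) (due to \cite{PT}) tells us that through $n$ general points of $X'$ there passes a unique curve of the induced covering family. Applying Lemma \ref{L:Siena} to $\phi_{|D|} : X \dashrightarrow X'$ with the associated $n$-covering families then gives simultaneously the birationality of $\phi_{|D|}$ and the corresponding uniqueness on $X$. Moreover, a general deformation of $C$ must avoid $\Bs|D|$, for otherwise the effective degree of $C'$ would drop below $\dd$, contradicting the sharp equality just obtained. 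Parts (iii)(a) (rationality of $X$) and (iii)(b) (smoothness and rationality of the general $\overline C$) are then inherited from the corresponding properties of the embedded $X'$ via the birational map $\phi_{|D|}$. The reverse implication of (ii) is clear: if $\phi_{|D|}$ is birational onto a non-degenerate $\overline X(r+1,n,\dd) \subset \p^{\overline{\pi}(r,n,\dd)-1}$, then $h^0(D) \geq \overline{\pi}(r,n,\dd)$, and equality follows from (i).

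For consequence (1) of the theorem, I specialize to $X = \overline X(r+1,n,\dd) \subset \p^{\overline{\pi}(r,n,\dd)-1}$ non-degenerate with $D$ the hyperplane class; the resulting smooth rational curves of degree $\dd$ are rational normal curves by Lemma \ref{L:charRN}. For consequence (2), I compose a birational parametrization $\varphi : \p^{r+1} \dashrightarrow X$ (which exists by (iii)(a)) with the embedding $X \subset \p^{\overline{\pi}(r,n,\dd)-1}$; choosing $\varphi$ so that the associated linear system on $\p^{r+1}$ becomes a sublinear system of $|\O_{\p^{r+1}}(\dd)|$ (achievable because the general smooth rational curves of degree $\dd$ in $X$ correspond to suitable rational curves in $\p^{r+1}$) exhibits $X$ as a linear birational projection of $\nu_\dd(\p^{r+1})$. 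The main delicate point throughout is the transition from the numerical bound in (i) to the rigidity of the equality case, which hinges on controlling precisely how $\Bs|D|$ can meet a general curve of the covering family and on verifying that $\phi_{|D|}|_C$ is birational onto its image of full degree $\dd$.
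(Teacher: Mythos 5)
Your part (i) coincides with the paper's argument: apply \eqref{F:bounddimINTRO} to $X'=\overline{\phi_{|D|}(X)}$ and use the monotonicity of $\overline\pi$, and your extraction of $r'=r$, $\dd'=\dd$ and the generic injectivity of $\phi_{|D|}$ on a general covering curve from the resulting chain of equalities is also how the paper opens the equality case. The gap is in what you do next: to feed Lemma \ref{L:Siena} you import wholesale from \cite{PT} the statement that through $n$ general points of a $\overline X(r+1,n,\dd)$ there passes a unique curve of the covering family (together with rationality of $X'$). That statement is precisely consequence (1) of the theorem you are proving; the paper does not take it as an input but establishes it inside this very proof, by decomposing $\p^{\overline\pi(r,n,\dd)-1}$ as $Osc^{\rho}_{X'}(x_1)\oplus S$ using \eqref{E:decompoPpi2}, projecting from $S$, identifying the image with $\nu_\rho(\p^{r+1})$ via Theorem \ref{T:strongbompiani}, and then transporting uniqueness and birationality back through $p_S$ with Lemma \ref{L:Siena}. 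This reduction to the case $n=2$ by an osculating projection is the substantive content of the proof, not an optional route; without it (and the paper only reproduces the bound \eqref{F:bounddimINTRO} from the in-preparation reference \cite{PT}), your argument is circular within the paper's own logical structure.

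Two further steps fail as written. First, smoothness of the general deformation $\overline C$ in (iii)(b) is not ``inherited \ldots{} via the birational map'': a curve birational to a smooth rational curve can perfectly well be singular. The paper gets smoothness from Lemma \ref{L:charRN}(2): the restriction of $|D|$ to $\overline C$ has degree $\dd$ and dimension $\dd$ because its image is a rational normal curve of degree $\dd$, and this forces $\overline C\simeq\p^1$ and forces $\phi_{|D|}$ to be defined along $\overline C$ and an isomorphism onto its image. Second, for consequence (2) an arbitrary birational parametrization $\varphi:\p^{r+1}\map X$ does not exhibit $X$ as a linear projection of $\nu_\dd(\p^{r+1})$; one needs the specific parametrization $p_S^{-1}$, which sends a general line to a degree-$\dd$ curve of the covering family, so that the composition with the embedding is given by a sublinear system of $|\O_{\p^{r+1}}(\dd)|$ of dimension $\overline\pi(r,n,\dd)-1$. (A minor point: your claim that a general member of an $n$-covering family avoids a codimension-two base locus is unjustified in general, but it is not actually needed for (i), and your later degree argument for why $\overline C$ misses the indeterminacy locus in the equality case is sound.)
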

\begin{proof} Suppose $h^0(\O_X(D))\geq\overline \pi(r,n,\dd)\geq 2$ and let $$\phi=\phi_{|D|}:X\map X'\subseteq\p(H^0(\O_X(D))=\p^N.$$ The variety $X'\subseteq\p^N$ is irreducible, non-degenerate,  of dimension $\dim(X')=s+1\leq r+1$ and is $n$-covered by irreducible curves of degree $\dd'\leq \dd$.
Therefore by \eqref{F:bounddimINTRO} one gets  
$$h^0(\O_X(D))\leq \overline\pi(s,n,\dd')\leq \overline\pi(r,n,\dd),$$
 yielding $h^0(\O_X(D))=\overline \pi(r,n,\dd)$ and $X'=\overline X(r+1,n,\dd)$. We have thus proved the bound (i) and also that if equality 
holds then $X'=\overline X(r+1,n,\dd)$.

One implication of the first part of (ii) is trivial and we shall prove only the non-trivial implication and the second part of (ii). If $h^0(\O_X(D))=\overline\pi(r,n,\dd)$, then by the previous analysis $\dim(X')=r+1$ and $\deg(\phi(\overline C))=\dd$ for a general deformation $\overline C$ of $C$.

Now we shall prove the birationality of $\phi$ and part (iii) for $n=2$. Later  we shall treat  the general case $n>2$.
If $n=2$, it  follows from Theorem 
\ref{T:strongbompiani} that $X'=\overline X(r+1,2,\dd)$ is projectively isomorphic to $\nu_{\dd}(\p^{r+1})$ so that $\phi(\overline  C)$ is the unique rational normal curve of degree $\dd$ passing through two general points of $X'$. Let $|\overline D|$ be the linear system on $\overline C$ obtained by restricting 
$|D|$ to $\overline C$. Then $\deg(\overline D)=\dd$ and $\dim(|\overline D|)=\dd$ since $\phi(\overline C)$ is a rational normal curve of degree $\dd$. Part (2) of Lemma \ref{L:charRN} implies that $\overline C$ is a smooth rational curve, that $\phi$ is defined along $\overline C$ and that the restriction of $\phi$ to $\overline C$ is an isomorphism onto its image. Thus  from  Lemma \ref{L:Siena} we deduce that $\phi$ is birational
and that through $2$ general points of $X$ there passes a unique smooth rational curve $\overline C$
such that $(D\cdot \overline C)=\dd$.

Suppose $n>2$ and recall  the following notation, see \S \ref{S:Preliminaries and notations}: 
$\rho=\lfloor {\delta}/{(n-1)} \rfloor$,  $ m=\dd-\rho\,(n-1)+1$ and $m'=n-1-m$. 
Let $x_1,\ldots,x_{n-1}$  be $n-1$ general points on  $X'=\overline X(r+1,n,\dd)\subset\p^{\overline\pi(r,n,\dd)-1}$.  By \eqref{E:dimoscmax} and \eqref{E:decompoPpi2}, we know that
the osculating spaces $Osc_{{X'}}^{\rho}(x_i)$ and $Osc_{{X'}}^{\rho-1}(x_{m+j})$ (for $i=1,\ldots,m$ and $j=1,\ldots,m'$) have the maximal possible dimension and are in direct sum in the ambient space, yielding a   decomposition 
$${\mathbb P}^{\overline\pi(r,n,\dd)-1}=\langle 
{X'}\rangle =  Osc_{{X'}}^{\rho}(x_1)   \oplus  S,$$  
where 
$$S=\big( \oplus_{i=2} ^m\,
Osc_{{X'}}^{\rho}(x_i)\,\big) \oplus 
\big( \oplus_{j=1} ^{m'}\,
Osc_{{X'}}^{\rho-1}(x_{m+j})\,\big).$$

Let $S$ be as above, let  
$$p_S:X'\map L=Osc_{{X'}}^{\rho}(x_1)=\p^{\overline\pi(r,2,\rho)-1}$$ be the restriction to $X'$ of the linear projection  from    $S$ onto $L$ and let $X'_S=\overline{p_S(X')}$.
Let  $\Sigma_{x_2,\ldots,x_{n-1}}$ be the family consisting of curves in the covering family $\Sigma$ passing through $x_2,\ldots, x_{n-1}$. The family  $\Sigma_{x_2,\ldots,x_{n-1}}$ is 2-covering and a general $C\in \Sigma_{x_2,\ldots,x_{n-1}}$
has contact of order at least $(m-1)(\rho+1)+m'\rho$ with $S$ and   is not contracted by $p_S$.
Thus  
the irreducible curve $C_S=p_S(C)$ has degree $\rho'\leq \dd-(m-1)(\rho+1)-m'\rho=\rho$. The projections of the curves in $\Sigma_{x_2,\ldots,x_{n-1}}$  produce a 2-covering family of irreducible  curves of degree $\rho'$ on the non-degenerate irreducible variety $X'_S\subset\p^{\overline\pi(r,2,\rho)-1}$.
Then  \eqref{F:bounddimINTRO} implies  that $X'_S=\overline X(r+1,2,\rho)$ is projectively equivalent to $\nu_\rho(\p^{r+1})$ so that  $C_S\subset X'_S$ is a rational normal curve of degree $\rho$ by Theorem \ref{T:strongbompiani}. Moreover, $C$ is a rational normal curve of degree $\dd$ by part (2) of Lemma \ref{L:charRN} since the restriction of $p_S$ to $C$ is given by a linear system of degree $\rho$ and $p_S(C)=C_S$ is a rational normal curve of degree $\rho$.

The dominant rational map $p_S$ is birational and through $n\geq 2$ points of $X'$ there passes a unique rational normal curve of degree $\dd$ by Lemma \ref{L:Siena}. Thus (1) is proved for every  $n\geq 2$.
Applying once again Lemma \ref{L:Siena} to $\phi:X\map X'$  we immediately deduce  also for $n>2$ that the map $\phi$ is birational and  that  through $n> 2$ points of $X$ there passes a unique smooth rational curve $\overline C$ such that $(D\cdot \overline C)=\dd$, proving (a) and (b) of (iii).

To prove (2), let $\varphi=p_S^{-1}:\p^{r+1}\map X'=\overline X(r+1,n,\dd)$. The birational map $\varphi$   sends  a general line $l\subset\p^{r+1}$ onto a general element $C\in \Sigma_{x_2,\ldots,x_{n-1}}$.
The composition of $\varphi$ with the natural inclusion  $X'=\overline X(r+1,n,\dd)\subset\p^{\overline\pi(r,n,\dd)-1}$ is  given by a sublinear system of $|\O_{\p^{r+1}}(\dd)|$ of dimension $\overline\pi(r,n,\dd)-1$, 
showing   that $X'=\overline X(r+1,n,\dd)$ is a birational linear projection of $\nu_\dd(\p^{r+1})$ from a linear space of dimension 
$\overline\pi(r,2,\dd)-\overline\pi(r,n,\dd)-1$.
\end{proof}
\medskip
The preceding statement concerns varieties $X$ that are $n$-covered by irreducible curves of degree $\dd$ with respect to an arbitrary  given Cartier divisor hence it is more general than 
 the  corresponding result  in \cite{PT} considering  embedded $X=\overline{X}(r+1,n,\dd)\subset \p^{\overline \pi(r,n,\dd)-1}$. Note however that the main tool used to prove Theorem \ref{T:Siena}, namely the reduction to the well understood case $n=2$ via an osculating projection, is  the same as in \cite{PT}.

\section{Bound for the top self intersection of a nef divisor}
\label{S:bounddegree}

In this section, as a consequence of part (i) of Theorem \ref{T:Siena},  we prove a bound for the top self intersection of a nef divisor $D$ on a proper
variety $X$ such that through  $n\geq 2$ general points there passes an irreducible curve $C$ with 
$(D\cdot C)=\dd\geq n-1$. In particular we obtain a bound for the degree of varieties $X(r+1,n,\dd)\subset\p^N$. The bound 
\eqref{F:bounddeggen} below generalizes a result usually attributed to Fano, who  proved it for $n=2$. The reader can consult   the modern reference \cite[Proposition V.2.9]{kollar} for the case $n=2$  of Fano's result and also the several  applications given in {\it loc. cit.}, e.g.  to the boundedness of  the number of components of families of smooth Fano varieties of a fixed dimension, see  \cite[Chap.\,V]{kollar}.
\medskip
 
\begin{thm}\label{P:bounddeg} 
Let $X$ be a proper irreducible variety of dimension $r+1$, let $D$ be a nef Cartier divisor on $X$ and suppose that through $n\geq 2$ general points there passes an irreducible curve $C$ such that $(D\cdot C)=\dd\geq n-1$. Then
\begin{equation}
\label{F:bounddeggen}
D^{r+1}  \leq  \frac{\dd^{r+1}}{(n-1)^r}\;.
\end{equation}

In particular, if    $X=X(r+1,n,\dd)\subset\p^N$, then
\begin{equation}
\label{F:bounddeg}
{\rm deg}( X)   \leq  \frac{\dd^{r+1}}{(n-1)^r}\; .
\end{equation}
\end{thm}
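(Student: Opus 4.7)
The plan is to extract $D^{r+1}$ from the bound in Theorem \ref{T:Siena}(i) by applying that bound to the multiples $kD$ and comparing asymptotics in $k$. For any positive integer $k$ the divisor $kD$ is nef with $(kD \cdot C) = k\delta \geq n-1$, so Theorem \ref{T:Siena}(i) applied to $kD$ yields
\begin{equation*}
h^0\bigl(X, \O_X(kD)\bigr) \;\leq\; \overline{\pi}(r, n, k\delta)
\end{equation*}
for every $k \geq 1$, and the problem reduces to understanding both sides of this inequality as $k \to \infty$.

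The right hand side has an explicit polynomial asymptotic. With $\delta' = k\delta$ one has $\rho = \lfloor \delta'/(n-1)\rfloor = k\delta/(n-1) + O(1)$, while each of the two binomials in the formula \eqref{F:defpibar} is a polynomial of degree $r+1$ in $\rho$ with leading coefficient $1/(r+1)!$; together with $m + m' = n-1$ this gives
\begin{equation*}
\overline{\pi}(r, n, k\delta) \;=\; \frac{\delta^{r+1}}{(n-1)^r (r+1)!}\, k^{r+1} + O(k^r) \qquad (k \to \infty).
\end{equation*}

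The left hand side is controlled from below by asymptotic Riemann-Roch for the nef divisor $D$:
\begin{equation*}
h^0\bigl(X, \O_X(kD)\bigr) \;\geq\; \frac{D^{r+1}}{(r+1)!}\, k^{r+1} + O(k^r).
\end{equation*}
On a smooth projective variety this is the classical combination of Hirzebruch-Riemann-Roch with the vanishing estimate $h^i(X, kD) = O(k^r)$ for $i \geq 1$ when $D$ is nef. In the generality of a possibly singular, possibly non-projective proper variety $X$, I would reduce to the smooth projective case by choosing, via Chow's lemma and resolution of singularities, a birational morphism $\pi : \widetilde X \to X$ with $\widetilde X$ smooth projective: then $\pi^*D$ remains nef, the projection formula gives $(\pi^*D)^{r+1} = D^{r+1}$, and $\pi_*\O_{\widetilde X} = \O_X$ gives $h^0(X, kD) = h^0(\widetilde X, k\pi^*D)$, so the classical statement on $\widetilde X$ transfers to $X$.

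Combining the three displays, dividing by $k^{r+1}$ and letting $k \to \infty$ yields at once $D^{r+1} \leq \delta^{r+1}/(n-1)^r$, which is \eqref{F:bounddeggen}. The embedded statement \eqref{F:bounddeg} then follows by taking $X = X(r+1,n,\dd) \subset \p^N$ and $D$ to be the hyperplane class: it is very ample (hence nef) and satisfies $(D \cdot C) = \deg(C) = \delta$ and $D^{r+1} = \deg(X)$. The only delicate point in the argument is the asymptotic Riemann-Roch lower bound for a merely nef (not necessarily big) divisor on a possibly singular proper variety; the birational reduction just described is the standard way to circumvent it, and everything else is an explicit estimate from the closed formula \eqref{F:defpibar} combined with Theorem \ref{T:Siena}(i).
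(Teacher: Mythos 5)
Your proposal is correct and follows essentially the same route as the paper: apply Theorem \ref{T:Siena}(i) to the multiples $\ell D$, compare with the asymptotics of $h^0(X,\O_X(\ell D))$ given by asymptotic Riemann--Roch for a nef divisor on a proper variety, and extract $D^{r+1}\leq \dd^{r+1}/(n-1)^r$ in the limit. The only differences are cosmetic: you compute the growth of $\overline{\pi}(r,n,k\dd)$ from the leading coefficients of the binomial polynomials (arguably cleaner than the paper's Stirling-formula computation), and you spell out the reduction to the smooth projective case, which the paper handles by citing \cite[Theorem VI.2.15]{kollar} directly.
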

\begin{proof} By the Asymptotic Riemann-Roch Theorem, see for example \cite[Theorem VI.2.15]{kollar},
 we know that 
$$h^0(\O_X(\ell D))=D^{r+1}\frac{\ell ^{r+1}}{(r+1)!}+O(\ell^r)$$
so that 
\begin{equation}
\label{E:degdef}
D^{r+1}= \lim_{\ell \rightarrow +\infty} \frac{(r+1)!h^0(\O_X(\ell D) )}{\ell^{r+1}} \; . 
\end{equation}

Since $X$ is $n$-covered by a family of irreducible curves having intersection with $D$ equal to  $\delta$,  $X$
 is also $n$-covered by a family of irreducible   curves having intersection  $ \delta\ell $ with $\ell D$ for any $\ell>0$. Theorem \ref{T:Siena} yields 
\begin{equation*}
 h^0(\O_X(\ell D))\leq \overline{\pi}(r,n,\delta\ell)
\end{equation*}
for every positive integer $\ell$. From \eqref{E:degdef} we deduce 
\begin{equation}
 D^{r+1}\leq  \liminf_{\ell \rightarrow +\infty} \frac{(r+1)!\;\overline{\pi}(r,n, \delta\ell)}{\ell^{r+1}} \; . 
\end{equation}
 Let $\rho_\ell=\lfloor \frac{\dd \ell}{n-1} \rfloor$ for $\ell>0$. The definition of $\overline{\pi}(r,n,\delta\ell)$ in \eqref{F:defpibar} implies that 
\begin{equation*}
 D^{r+1}\leq  \liminf_{\ell \rightarrow +\infty} \frac{(n-1) \,\big(r+1+ \rho_\ell\big)!}{\ell^{r+1}\rho_\ell!} \; . 
\end{equation*}

Using Stirling's formula, for $\ell\to+\infty$, we have 
\begin{align*}
 \frac{(n-1) \,\big(r+1+ \rho_\ell\big)!}{\ell^{r+1}\rho_\ell!} & \sim
\frac{(n-1) \,\sqrt{ r+1+ \rho_\ell}\, \big(\frac{r+1+ \rho_\ell}{e} \big)^{r+1+\rho_\ell}}{\ell^{r+1} \sqrt{\rho_\ell} \big( \frac{\rho_\ell}{e}\big)^{\rho_\ell}} 
\\
& \sim 
\frac{(n-1) \, \big({r+1+ \rho_\ell} \big)^{r+1}}{\ell^{r+1} e^{r+1} } 
\, \Big( 
1+ \frac{r+1}{\rho_\ell } \Big)^{\rho_\ell}\, .
\end{align*}
Since  $\rho_\ell\rightarrow +\infty$ if $\ell\to+\infty$   and recalling that $\lim_{x\rightarrow +\infty} \big( 1+ \frac{r+1}{x}\big)^x=e^{r+1}$, we obtain
\begin{align*}
 \frac{(n-1) \,\big(r+1+ \rho_\ell\big)!}{\ell^{r+1}\rho_\ell!} & \sim
\frac{(n-1) \, \rho_\ell^{r+1}}{\ell^{r+1}  } \; . 
\end{align*}

But $\rho_\ell \sim  \frac{\delta \ell}{n-1}$ if $ \ell\rightarrow +\infty$ hence we finally get
\begin{equation*}
 D^{r+1}\leq  \liminf_{\ell \rightarrow +\infty} 
\frac{(n-1)\, \big( \frac{\delta \ell}{n-1}  \big)^{r+1}}{\ell^{r+1}  } 
=  \frac{\dd^{r+1}}{(n-1)^r}\; .\qedhere
\end{equation*}
\end{proof}

\begin{rem}
{\rm  The bound (\ref{F:bounddeg})  is sharp for $n=2$ since $\dd^{r+1}$ is the degree of $v_\dd(\mathbb P^{r+1})$. 
More generally, it is sharp for every $n\geq 2$ as soon as $\dd=\rho(n-1)$ for some integer $\rho$ since in this case $\dd^{r+1}/(n-1)^r
=\rho^{r+1}(n-1)$ is the degree of $v_\rho(Y)$ for any non-degenerate variety $Y^{r+1}\subset \mathbb P^{n+r-1}$ of minimal degree $n-1$.} 
\end{rem} 
\medskip

We apply the previous bound on the degree to classify the $\overline{X}(r+1, n, n-1+k)$'s for $n$ sufficiently large,  when $r$ and $k$ are fixed.

Suppose $k=0$.  Since $\overline{\pi}(r, n, n-1)=n+r$,   $X=\overline{X}(r+1,n,n-1)\subset\p^{n+r-1}$ is a variety of minimal degree equal to ${\rm codim}(X)+1=n-1$ by \eqref{F:bounddeg}, as it is well known.
These varieties were classified in ancient times by classical algebraic geometers, see \cite{eisenbudharris} and also \S \ref{S:examples}.

Now we consider the case  $k>0$. 
When $n$ is sufficiently large, we have  $m=k+1,$ $m'=n-k-2$ and $\rho=1$, yielding  
 $\overline{\pi}(r,n, n-1+k)=(k+1)(r+2)+n-k-2$. So \eqref{F:bounddeg} implies that $X=\overline{X}(r+1, n, n-1+k)$ is a variety of minimal degree as soon as 
the quantity 
\begin{equation}
\label{E:theta}
 \theta= (n-1+k)^{r+1}-(n-1)^r\big(n+k(r+1)-2 \big) 
\end{equation}
 is strictly positive. But $\theta= n^r+ O(n^{r-1})$ as a simple and direct expansion shows. 
Therefore for $n$ sufficiently large $\theta>0$ and   $X=\overline X(r+1,n,n-1+k)$ is a variety of minimal degree. Assuming moreover  that $n>5$, as we shall do from now on,  one deduces that $X$ is a rational normal scroll $S_{a_0,\ldots,a_r}$ with $0\leq a_0 \leq \ldots \leq a_r $  and  $\sum_{i=0}^r a_i=n+k(r+1)-1$. We want to prove that $X$ is projectively equivalent to  $S_{\alpha_0+k,\ldots,\alpha_r+k}$ with $\alpha_0,\ldots,\alpha_r$ verifying  $0\leq \alpha_0 \leq \ldots \leq \alpha_r $  and $\sum_{i=0}^r \alpha_i=n-1$. With the terminology introduced in \S \ref{S:examples} this will mean exactly that 
$X=\overline X(r+1,n,n-1+k)$ is of {\it Castelnuovo type} for $n$ sufficiently large.

 Let $x_1,\ldots,x_{k+1}$ and $y_1,\ldots,y_{n-k-2}$ be general points of $X$ and let  $\pi=\overline{\pi}(r,n,n-1+k)$. 
According to  \eqref{F:bounddimINTRO},
\begin{equation*}
{\mathbb P}^{\pi-1}=\big\langle 
{X}
  \big\rangle=
\Big( \oplus_{i=1} ^{k+1}\,
Osc_{{X}}^{1}(x_i)\,\Big) \oplus 
\big\langle y_1,\ldots,y_{n-k-2}
\big\rangle
 \, .
\end{equation*}

Let us introduce some notation. Let $0\leq a_0\leq a_1\leq \ldots\leq a_r$ with $a_r>0$ be
integers and set 
$\p(a_0,\ldots,a_r):=\p(\oplus_{i=0}^r {\mathcal O}_{\p^1}(a_i))$.
Let  $H$ be a divisor in $|{\mathcal O}_{\p(a_0,\ldots,a_r)}(1)|$ and
consider the morphism $\phi=\phi_{|H|}: \p(a_1,\ldots,a_n)\to  \p^{a_0+\cdots+a_r+r}$
whose image is denoted  $S_{a_0,\ldots,a_r}$ and called a {\it  rational
normal scroll of type $(a_0,\ldots, a_r)$}.
The morphism $\phi$ is birational onto its image so that
$S_{a_0,\ldots,a_r}$ has dimension $r+1$ and its degree is
$a_0+\cdots +a_r$. The  scroll $S_{a_0,\ldots,a_r}$ is smooth if and only if  $a_0>0$ and  $\phi$ is an embedding in this case. 
If $0 =a_i<a_{i+1}$, then $S_{a_0,\ldots,a_r}$ is a cone
over $S_{a_{i+1},\ldots,a_r}$ with vertex a $\p^{i}$.

\begin{lemma}[\cite{cilibertorusso} p.\,13]
 \label{L:cilibertorusso}
 Let $b_0,\ldots,b_r$ {be} natural integers such that $0=b_0=\cdots=b_i<b_{i+1} \leq b_{i+2}\leq \ldots \leq b_r$, let $\{d'_1,\ldots,d'_{r-i}\}=\{ b_{i+1}-1,\ldots,b_r-2\}$
 and let $\{d_1,\ldots,d_{r-i}\}$ be a rearrangement of $\{d'_1,\ldots,d'_{r-i}\}$ such that $0\leq d_1\leq d_2\leq\ldots\leq d_{r-i}.$ 
  Then the  general tangential projection of   $S_{b_0,\ldots,b_r}$ is   $S_{d_1,\ldots,d_{r-i}}$.
\end{lemma}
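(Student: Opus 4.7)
The strategy is first to reduce from the cone scroll $S_{b_0,\ldots,b_r}$ to the smooth scroll $S_{b_{i+1},\ldots,b_r}$ via its vertex structure, and then to compute the tangential projection of a smooth scroll as an elementary transformation of the defining split bundle on $\p^1$.

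Since $b_0=\cdots=b_i=0$, the scroll $S=S_{b_0,\ldots,b_r}$ is a cone with vertex $V=\p^i$ over the smooth scroll $S'=S_{b_{i+1},\ldots,b_r}$, with $V$ and $\langle S'\rangle$ complementary linear subspaces of $\langle S\rangle$. A general point $x\in S$ decomposes uniquely as $x=v+y$ with $v\in V$, $y\in S'$, and since every line from $V$ through $x$ lies in $S$ (as $S=V*S'$), a direct check gives
\[
T_xS=\langle V,\,T_yS'\rangle,
\]
a linear subspace of dimension $i+(r-i)+1=r+1$. In particular $V\subset T_xS$, so the tangential projection $\pi_{T_xS}$ factors through the cone-projection $\pi_V:S\twoheadrightarrow S'$ followed by the tangential projection of $S'$ from $T_yS'$. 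This reduces the lemma to computing the general tangential projection of the smooth scroll $S'=S_{a_1,\ldots,a_k}$, where $a_j=b_{i+j}$ and $k=r-i$.

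Parameterize $S'$ as the image of $\p(E')$ under $|\O_{\p(E')}(1)|$, where $E'=\bigoplus_{j=1}^k \O_{\p^1}(a_j)$. A general point $y\in S'$ lies over some $t_0\in\p^1$ and corresponds to a generic direction $[v_0]\in\p(E'_{t_0})$. A local computation in fibre/base coordinates shows that a hyperplane contains $T_yS'$ exactly when the corresponding section $\sigma\in H^0(E')$ satisfies $\sigma(t_0)=0$ (vanishing along the ruling through $y$) together with $(\partial\sigma/\partial t)(t_0)\in\C\cdot v_0$ (the extra base-direction condition). Such sections form $H^0(E'')$ for a rank-$k$ subsheaf $E''\subset E'$ fitting in the short exact sequence
\[
0\longrightarrow E''\longrightarrow E'(-t_0)\longrightarrow \O_{t_0}\longrightarrow 0,
\]
where the surjection is evaluation at $t_0$ against a generic linear form on the fibre $E'(-t_0)_{t_0}$ determined by $[v_0]$.

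By standard bundle theory on $\p^1$, the kernel of a generic surjection $\bigoplus_j \O(a_j-1)\twoheadrightarrow \O_{t_0}$ is $\bigoplus_{j<k}\O(a_j-1)\oplus \O(a_k-2)$: the unit drop in degree falls on the summand of largest degree, since no constant section of $\O(a_k-1)$ can satisfy the generic evaluation relation, whereas a linear section of $\O(a_k-2)$ can. Therefore $E''\cong\bigoplus_{j<k}\O(a_j-1)\oplus \O(a_k-2)$, and the tangential projection of $S'$ is the scroll $S_{a_1-1,\ldots,a_{k-1}-1,a_k-2}$, which is $S_{d_1,\ldots,d_{r-i}}$ after reordering in non-decreasing order. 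The main obstacle is precisely this last identification of the splitting type of $E''$ -- verifying that the extra degree drop lands on the summand of largest degree rather than on one of the others -- which comes down to the sub-line-bundle count sketched above; everything else is formal manipulation with elementary transformations of split bundles on $\p^1$.
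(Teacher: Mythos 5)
The paper does not actually prove this lemma: it is quoted, with a page reference, from \cite{cilibertorusso}, so there is no in-paper argument to compare yours against. Judged on its own terms, your proof is correct and follows the natural route: reduce the cone $S_{b_0,\ldots,b_r}$ to the smooth scroll $S_{b_{i+1},\ldots,b_r}$ over which it is a cone (using $T_xS=\langle V,T_yS'\rangle$, so that the tangential projection factors through the vertex projection), and then realize the tangential projection of the smooth scroll as an elementary transformation of the split bundle. Two points need tightening. First, the extra condition singling out the hyperplanes through $T_yS'$ among those through the ruling is not ``$(\partial\sigma/\partial t)(t_0)\in\C\cdot v_0$'': writing $\sigma=(t-t_0)\tau$, the condition is the vanishing of the pairing of $\tau(t_0)\in E'(-t_0)_{t_0}$ against the linear form on that fibre determined by the point $y$; as displayed, your condition is a containment between two vectors of the same space and does not parse, although the exact-sequence formulation you give in the following sentence is the correct one, so this is only a notational slip. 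Second, the identification of the splitting type of $E''=\ker\bigl(\bigoplus_j\O(a_j-1)\to\O_{t_0}\bigr)$ is indeed the crux, and your justification is only heuristic; note also that the parenthetical reason is literally false when the maximal degree $a_k$ is attained by several summands (the kernel then does contain a sub-line bundle of degree $a_k-1$), though the asserted splitting is still correct. A clean way to finish: twisting the sequence $0\to E''\to E'(-t_0)\to\O_{t_0}\to 0$ by $\O(m)$ gives $h^0(E''(m))=\sum_j (a_j+m)^+-\varepsilon(m)$, where $\varepsilon(m)=1$ exactly when $m\geq 1-a_k$ (the evaluation followed by the generic functional is onto global sections if and only if some summand has sections not all vanishing at $t_0$, and genericity of the functional makes the largest summand decide); this function of $m$ coincides with that of $\bigoplus_{j<k}\O(a_j-1)\oplus\O(a_k-2)$, and since the splitting type of a bundle on $\p^1$ is determined by $m\mapsto h^0(E(m))$, the claim follows. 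With these repairs the argument is complete and agrees with the cited source.
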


Thus  if $a_0<k$, there would exist $\ell<k+1$ such that the image $X'$ of $X$ via  the linear projection $p_S$ from  $S=
 Osc_{X}^1(x_1)\oplus \cdots {\oplus}Osc_{X}^1(x_\ell)$ would be  a rational normal scroll of dimension $r'\leq r$. This would imply 
 $\dim(\langle \overline X(r+1, n, n-1+k)\rangle)<\overline \pi(r,n,n-1+k)-1$, leading to a contradiction. In conclusion we proved the following consequence of \eqref{F:bounddeg}.

\begin{coro}  If $n$ is sufficiently large, a 
  $\overline{X}({r+1},n, n-1+k)$ is  projectively equivalent to  a rational normal scroll $S_{\alpha_0+k,\ldots,\alpha_r+k}$ with  $\alpha_0,\ldots,\alpha_r$   such that $\sum_{i=0}^r \alpha_i=n-1$. 
\end{coro}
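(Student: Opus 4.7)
The plan is to combine the degree bound from Theorem \ref{P:bounddeg} with the classification of nondegenerate varieties of minimal degree, and then use iterated osculating projections (controlled by Lemma \ref{L:cilibertorusso}) to force every scroll invariant to be at least $k$.

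First I would unpack the asymptotics of $\overline{\pi}(r,n,n-1+k)$ for $k$ fixed and $n\gg 0$: one has $\rho=1$, $m=k+1$ and $m'=n-k-2$, so $\overline{\pi}(r,n,n-1+k)=(k+1)(r+2)+n-k-2$ and the codimension of $X=\overline{X}(r+1,n,n-1+k)$ in its linear span equals $n+k(r+1)-2$. The bound \eqref{F:bounddeg} then reads $\deg(X)\leq(n-1+k)^{r+1}/(n-1)^r$, and since the quantity $\theta$ in \eqref{E:theta} behaves like $n^r$ for $n\to\infty$, it is strictly positive for $n$ sufficiently large. Thus $\deg(X)\leq \mathrm{codim}(X)+1$, and as $X$ is nondegenerate, $X$ is a variety of minimal degree. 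Imposing $n>5$ rules out the Veronese surface (and other small sporadic cases), so by the classical del Pezzo--Bertini theorem $X$ is projectively equivalent to a rational normal scroll $S_{a_0,\ldots,a_r}$ with $0\leq a_0\leq\cdots\leq a_r$ and $\sum_{i=0}^r a_i = n+k(r+1)-1$.

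The core step is then to show $a_0\geq k$, after which setting $\alpha_i:=a_i-k\geq 0$ yields $\sum \alpha_i=n-1$ as required. I would argue by contradiction: suppose $a_0<k$. Choose general points $x_1,\ldots,x_{k+1}$ and $y_1,\ldots,y_{n-k-2}$ on $X$, and consider the iterated osculating projections from $S_\ell := Osc^{1}_X(x_1)\oplus\cdots\oplus Osc^{1}_X(x_\ell)$ for $\ell=1,\ldots,k+1$. By Lemma \ref{L:cilibertorusso} each such projection takes a rational normal scroll to a rational normal scroll of one smaller dimension precisely when the smallest invariant is $0$, and otherwise subtracts $2$ from every invariant (reordered). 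Iterating from the hypothesis $a_0<k$, the cumulative subtraction of $2$'s will exhaust the smallest invariant before $\ell$ reaches $k+1$, so some intermediate projection $p_{S_{\ell_0}}(X)$ will have dimension strictly less than $r+1$. But the decomposition \eqref{E:decompoPpi2} of the ambient space forces the linear projection from any $\ell\leq k+1$ of these osculating spaces, further projected modulo the $y_j$'s, to remain surjective onto the complementary factor, which for a variety of the expected dimension keeps the image $(r+1)$-dimensional. This gives the contradiction and shows $a_0\geq k$.

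The delicate part will be the bookkeeping in the iteration: Lemma \ref{L:cilibertorusso} only describes a single tangential projection applied to a scroll, and to iterate it $k+1$ times I need each intermediate image to again be a rational normal scroll whose invariants are the predicted shift of the previous ones, and I need the osculating spaces $Osc^1_X(x_i)$ to be mapped under the earlier projections exactly to the tangent spaces at the images of the $x_i$. This compatibility is essentially what the proof of Lemma \ref{L:cilibertorusso} supplies, but it must be invoked carefully to ensure that the dimension count at step $\ell_0$ indeed contradicts the fact, coming from \eqref{E:decompoPpi2}, that the span of $Osc^1_X(x_1),\ldots,Osc^1_X(x_{k+1}),y_1,\ldots,y_{n-k-2}$ fills all of $\mathbb{P}^{\overline{\pi}(r,n,n-1+k)-1}$.
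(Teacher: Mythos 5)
Your proposal follows the paper's own proof essentially step for step: the same asymptotic evaluation of $\overline{\pi}(r,n,n-1+k)$ and of the quantity $\theta$, the degree bound of Theorem \ref{P:bounddeg} to conclude that $X$ has minimal degree for $n$ large, the classification of varieties of minimal degree (with $n>5$ to exclude the sporadic cases) to get a scroll $S_{a_0,\ldots,a_r}$, and finally iterated osculating projections governed by Lemma \ref{L:cilibertorusso}, played off against the direct-sum decomposition \eqref{E:decompoPpi2}, to rule out $a_0<k$. The only difference is one of emphasis: you flag explicitly the bookkeeping needed to iterate Lemma \ref{L:cilibertorusso}, which the paper leaves implicit in the phrase ``there would exist $\ell<k+1$ such that\dots''.
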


\begin{rem}{\rm
 The value of $n$ in the previous result can be made effective: $\overline{X}(r+1, n, n-1+k)$
 is a scroll as in the previous corollary  as soon as $n \geq \max(6,k-2)$ and $\theta>0$,  where $\theta$ is the quantity defined in \eqref{E:theta}.}
\end{rem}

\section{Some examples of  varieties
$\overline{X}(r+1,n,\dd)$ }\label{S:examples}

In this Section we describe in detail three classes of  $\overline{X}(r+1,n,\dd)$. 
 The simplest ones are the so called  varieties of minimal degree.  
Next, using the theory of Castelnuovo varieties, we will construct examples 
of $\overline{X}(r+1,n,\dd)$  for arbitrary $n,r$ and $\dd\geq n-1$. These latter examples have already been presented in \cite{PT}. 
Finally, we will  show that twisted  cubic curves associated to  Jordan algebras of rank 3 are examples of $\overline{X}(r+1,3,3)$. 
 
\subsection{Varieties of minimal degree  and their associated models \cite{eisenbudharris,harris}}
\label{S:varmini}
It is well-known  (see \cite[p.173]{griffithsharris}) 
 that  the degree of an irreducible non-degenerate projective variety 
$Y \subset {\mathbb P}^{n+r-1}$ of dimension $r+1$ satisfies ${\rm deg}(Y)\geq {\rm codim}(Y)+1=n-1$. By definition,  $Y\subset\p^{n+r-1}$ is a {\it variety of minimal degree} if its degree is $n-1$. Such varieties exist and are well
known. Their classification goes back to Bertini and Enriques and can be summarized as follows  (the notation being as in  \cite{harris}):

 \begin{thm}
\label{varmini}  
The following is an exhaustive non redundant list of the $(r+1)$-dimensional varieties $Y\subset{\mathbb P}^{n+r-1}$ of minimal degree $n-1$: 
\begin{enumerate}
 \item
 the rational normal scrolls  $S_{a_0,\ldots, a_{r}}$ 
for some integers $a_0,\ldots, a_{r}$ verifying
$0 \leq a_0\leq a_1 \leq \ldots \leq a_{r} \leq n-1$\, and\,  $ a_0+\cdots+a_r=n-1$; \vspace{0.1cm} 
\item  the ambient space $\mathbb P^{r+1}$ itself  ($n=2$); \vspace{0.1cm} 
\item the quadric hypersurfaces of rank 
 $\varrho \geq 5$ ($n=3$);  \vspace{0.1cm} 
\item the cones over a Veronese surface in $\mathbb P^5$ ($n=5$). 
\end{enumerate}
\end{thm}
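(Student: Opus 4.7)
The plan is to argue by induction on $r$. The case $n=2$ is trivial, since a variety of degree $1$ is a linear subspace and non-degeneracy forces it to equal the ambient $\mathbb P^{r+1}$; so assume $n\geq 3$ henceforth. The pivotal tool is that a general hyperplane section $Y\cap H$ of a variety of minimal degree $Y\subset\mathbb P^{n+r-1}$ is again a variety of minimal degree of one lower dimension and the same codimension $n-1$. Indeed $Y\cap H$ has dimension $r$, degree $n-1$, and a standard argument (using that $Y$ is not contained in any hyperplane) shows that it spans $H=\mathbb P^{n+r-2}$. So the problem reduces to classifying varieties of minimal degree in dimension $r$, together with an extension step to recover the $(r+1)$-dimensional $Y$ from the structure of $Y\cap H$.

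The base case $r=0$ is immediate from Lemma \ref{L:charRN}(1) applied with $N=\dd=n-1$: a non-degenerate irreducible curve of degree $n-1$ in $\mathbb P^{n-1}$ is a rational normal curve. The heart of the matter is the surface case $r=1$, which is the classical theorem of del Pezzo. A non-degenerate irreducible surface $S\subset\mathbb P^n$ of degree $n-1$ has general hyperplane section a rational normal curve of degree $n-1$ (by the base case), and a careful analysis of the minimal desingularization of $S$ and of the linear system cutting out the hyperplane sections shows that $S$ must be a rational normal scroll $S_{a_0,a_1}$ with $a_0+a_1=n-1$ (possibly the cone $S_{0,n-1}$ over a rational normal curve), the Veronese surface $\nu_2(\mathbb P^2)\subset\mathbb P^5$, or a quadric in $\mathbb P^3$ (which appears already in the scroll list).

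For the inductive step with $r\geq 2$, assume the classification holds in dimension $r$ and let $Y\cap H$ be a general hyperplane section of $Y$. If $Y\cap H$ is a rational normal scroll, one shows that its ruling extends to a ruling on $Y$, so $Y$ is itself a scroll (possibly a cone over a lower-dimensional smooth scroll, which is again a scroll in the sense of Theorem \ref{varmini}(1)). If $Y\cap H$ is a quadric (so $n=3$), then $Y$ is a quadric hypersurface too; quadrics of rank $\leq 4$ already appear in the scroll family (rank $3$ being a cone over a plane conic, rank $4$ being the Segre $\mathbb P^1\times\mathbb P^1=S_{1,1}$, with appropriate cones in higher dimension), so only quadric hypersurfaces of rank $\varrho\geq 5$ arise as genuinely new cases. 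If $Y\cap H$ is a Veronese surface (which forces $n=5$), then $Y$ must be a cone over the Veronese; this rigidity---that $\nu_2(\mathbb P^2)\subset\mathbb P^5$ cannot appear as a hyperplane section of a non-cone, non-scroll variety in $\mathbb P^6$---is the most subtle point and is the main obstacle of the argument, traceable to the fact that the embedding line bundle $\mathcal O_{\mathbb P^2}(2)$ admits no infinitesimal deformation compatible with extending the Veronese into a non-cone of dimension $\geq 3$. All these classical facts are worked out in detail in \cite{eisenbudharris} and \cite{harris}, to which we refer for a complete treatment.
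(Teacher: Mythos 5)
The paper offers no proof of Theorem \ref{varmini} at all: it is quoted as a classical result of Bertini, del Pezzo and Enriques, with \cite{eisenbudharris} and \cite{harris} cited for the statement. So there is nothing in the paper to compare your argument against line by line; what you have written is the standard modern roadmap (induction on dimension via general hyperplane sections), and it is the right one. Your reductions are correctly organized: the curve case does follow from Lemma \ref{L:charRN}(1), the general hyperplane section of a variety of minimal degree is again of minimal degree and nondegenerate in $H$, and you correctly observe that quadrics of rank $\le 4$ are absorbed into the scroll family so that only rank $\varrho\ge 5$ survives as a separate item.

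That said, be aware that your text is a sketch rather than a proof: the two steps you yourself flag as the heart of the matter --- del Pezzo's classification of surfaces of degree $n-1$ in $\mathbb P^n$, and the extension step showing that a variety whose general hyperplane section is a scroll (resp.\ a Veronese surface) is itself a scroll (resp.\ a cone over the Veronese) --- are exactly where all the work lies, and you defer both to \cite{eisenbudharris}. In particular, the justification you offer for the Veronese rigidity (``the line bundle $\mathcal O_{\mathbb P^2}(2)$ admits no infinitesimal deformation compatible with extending'') is a gesture toward a deformation-theoretic argument that is not the one carried out in \cite{eisenbudharris}, where the extension analysis is done concretely via the quadrics through the hyperplane section and the resulting scroll or cone structure on $Y$. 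Since the paper itself treats the theorem as a citation, deferring these points is acceptable in context, but as a standalone proof your proposal establishes the skeleton and outsources the vertebrae.
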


Recall that a scroll $S_{a_0,\ldots, a_{r}}$ is singular if and only if $a_0=0$. In this case, it is a cone over the scroll $S_{a_j,\ldots,a_{r}}$ where $j$ stands  for the smallest integer such that $a_j\neq 0$. 
 
Let $Y\subset \mathbb P^{n+r-1}$ be a variety of minimal degree. The space spanned   by $n$ generic distinct points $y_1,\ldots,y_n$ on $Y$ is a  
 $(n-1)$-dimensional subspace in $ \mathbb P^{n+r-1}$. The latter being generic, it intersects $Y$ along an irreducible non-degenerate curve of degree $n-1$, which by Lemma \ref{L:charRN} is a rational normal curve in $\langle y_1,\ldots,y_n\rangle$
 passing through $y_i$ for any $i=1,\ldots,n$. This shows that  $Y\subset\p^{n+r-1}$ is $n$-covered by rational normal curves of degree $n-1$. Since $\overline \pi(r,n, n-1)=n+r$, it follows that varieties of minimal degree are examples of  $\overline{X}(r+1,n,n-1)$,  which  will be called  {\it models of minimal degree}.  Note that also the converse is true because  according to Theorem  \ref{P:bounddeg}, every $\overline{X}(r+1,n,n-1)\subset\p^{n+r-1}$ is a variety of minimal degree $n-1$.

\subsection{Castelnuovo's varieties   and their associated models \cite{harris,ciliberto,PT}}\label{castelnuovovar} 
Let $V\subset\mathbb P^{n+r-1}$ be an irreducible non-degenerate variety of dimension $r$ and degree $d>1$. There is  an explicit bound on the {geometric genus} $g(V)$ of $V$  in terms of an explicit constant depending on $d,n$ and $r$. The {\it geometric genus} $g(V)$ is defined as  the dimension $h^0(K_{\tilde{V}})$ for one (hence for all)   resolution  of the singularities $\tilde{V}\rightarrow V$ of $V$.

\begin{thm}[Castelnuovo-Harris bound \cite{harris}]\label{P:boundCH}
The following bound holds
\begin{equation}
\label{F:boundgenusCH}
g(V)
 \leq {\pi}(r,n,d). 
\end{equation}
for the geometric genus of $V\subset \mathbb P^{n+r-1}$. 
In particular  $g(V)=0$ if $d < r(n-1)+2$.
\end{thm}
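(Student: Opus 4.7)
The plan is to prove the bound by induction on the dimension $r$, reducing to the classical Castelnuovo bound for curves via iterated general linear sections. First, I would pass to a resolution of singularities $\widetilde V \to V$, so that $g(V) = h^0(\widetilde V, K_{\widetilde V})$; by Bertini every general linear section used in the induction can then be taken smooth and irreducible.

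Base case $r = 1$ (classical Castelnuovo bound). Let $C \subset \p^n$ be an irreducible non-degenerate curve of degree $d$. A general hyperplane section $\Gamma = C \cap H$ is a set of $d$ points in uniformly general position in $H \simeq \p^{n-1}$; by the General Position Lemma, $\Gamma$ imposes $\min(d,\,1 + k(n-1))$ conditions on forms of degree $k$. Combining this with the restriction sequence
\[
0 \to \O_C(k-1) \to \O_C(k) \to \O_\Gamma \to 0
\]
and Riemann-Roch ($h^0(\O_C(k)) = 1 + kd - g(C)$ for $k \gg 0$), a telescoping of the first differences yields
\[
g(C) \leq \sum_{k \geq 1}\bigl(d - 1 - k(n-1)\bigr)^+ = \pi(1,n,d).
\]

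Inductive step. Assuming the bound in dimension $r-1$, choose a general chain of smooth linear sections $V = V^r \supset V^{r-1} \supset \cdots \supset V^1 = C$ with $V^i \subset \p^{n+i-1}$ of dimension $i$ and degree $d$, and with $C$ a general curve section. Adjunction gives $K_{V^{i+1}}|_{V^i} = K_{V^i}(-1)$, which iterates to $K_V|_C = K_C(-(r-1))$. At each stage the restriction sequence
\[
0 \to K_{V^{i+1}}(\ell - 1) \to K_{V^{i+1}}(\ell) \to K_{V^i}(\ell - 1) \to 0
\]
controls $h^0(K_{V^{i+1}}(\ell))$ inductively in terms of twisted canonical sections on $V^i$. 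Iterating these constraints down to the curve case and keeping careful track of how many $(r-1)$-tuples of non-negative twist shifts have total shift $\sigma$ produces a bound of the form
\[
g(V) \leq \sum_{\sigma \geq 0} \binom{\sigma + r - 1}{\sigma}\, h^0\!\bigl(C,K_C(-\sigma - r + 1)\bigr),
\]
the binomial factor arising as the combinatorial count of such tuples.

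Finally, reapplying the General Position Lemma to $C \subset \p^n$ but now to forms of degree $k$ vanishing on $s$ general hyperplane sections of $C$ gives $h^0(C, K_C(-s)) \leq (d - (s+1)(n-1) - 1)^+$ for every $s \geq 0$. Substituting $s = \sigma + r - 1$ and assembling yields the desired
\[
g(V) \leq \sum_{\sigma \geq 0} \binom{\sigma + r - 1}{\sigma}\bigl(d - (\sigma + r)(n-1) - 1\bigr)^+ = \pi(r, n, d).
\]
The ``in particular'' clause is then immediate: if $d < r(n-1) + 2$ then every summand satisfies $d - (\sigma + r)(n - 1) - 1 \leq d - r(n-1) - 1 < 1$, so each $(\cdot)^+$-term vanishes and $g(V) = 0$. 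The main obstacle will be the combinatorial bookkeeping matching the iterated sums produced by the inductive restriction sequences to the closed-form $\pi(r,n,d)$, and in particular extracting the exact binomial weight $\binom{\sigma + r - 1}{\sigma}$. Equally delicate is the injectivity needed to transfer $h^0$ across the restriction sequences at each stage (which can fail in the presence of positive intermediate cohomology of $\O_{V^i}$), so one has to argue either via Hilbert-function considerations on $V$ directly, or via the uniform position principle applied simultaneously to all partial linear sections, to obtain the clean telescoping estimate above.
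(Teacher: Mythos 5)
The paper does not actually prove this statement: Theorem \ref{P:boundCH} is quoted from Harris's paper \cite{harris} with a citation and no internal argument, so there is nothing to compare against except the original. Your strategy --- resolve singularities, iterate general hyperplane sections down to a curve section, control $h^0$ of twisted canonical sheaves through the restriction sequences, and finish with Castelnuovo's general position estimate --- is exactly Harris's, and it does work. Incidentally, the ``delicate injectivity'' you flag at the end is a non-issue: from $0\to K_{V^{i+1}}(\ell-1)\to K_{V^{i+1}}(\ell)\to K_{V^i}(\ell-1)\to 0$ the inequality $h^0(K_{V^{i+1}}(\ell))-h^0(K_{V^{i+1}}(\ell-1))\leq h^0(K_{V^i}(\ell-1))$ follows from left exactness of $H^0$ alone, with no vanishing hypothesis; this is precisely why one runs the argument on canonical rather than structure sheaves. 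The genuinely delicate point is instead the adjunction and resolution bookkeeping when $V$ is singular.

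There are, however, two concrete errors in your intermediate formulas, and they happen to cancel. First, iterating $h^0(K_{V^{i+1}}(-t))\leq \sum_{u\geq t+1}h^0(K_{V^i}(-u))$ over the $r-1$ cuts gives $g(V)\leq \sum_{s\geq r-1}\binom{s-1}{r-2}\,h^0(K_C(-s))$: the weight attached to $h^0(K_C(-\sigma-r+1))$ is the number of $(r-1)$-tuples of non-negative shifts with sum $\sigma$, namely $\binom{\sigma+r-2}{\sigma}$, not $\binom{\sigma+r-1}{\sigma}$ (test $r=2$: every weight must be $1$). Second, the claimed estimate $h^0(K_C(-s))\leq \big(d-(s+1)(n-1)-1\big)^+$ is false: for a smooth plane quintic ($n=2$, $d=5$, $K_C=\O_C(2)$) and $s=0$ it asserts $6=g\leq 3$. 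What the general position lemma actually bounds is the first difference, $h^0(K_C(-s))-h^0(K_C(-s-1))\leq \big(d-(s+1)(n-1)-1\big)^+$, whence $h^0(K_C(-s))\leq \sum_{k\geq s+1}\big(d-1-k(n-1)\big)^+$. Substituting this summed bound into the corrected weights and exchanging the order of summation via the hockey-stick identity $\sum_{s=r-1}^{k-1}\binom{s-1}{r-2}=\binom{k-1}{r-1}$ yields exactly $\pi(r,n,d)$. As written, your two off-by-one-summation slips compensate to produce the right closed form, but each displayed inequality in between is false, so the argument must be repaired as indicated before it can stand.
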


 An irreducible variety $V\subset\p^{n+r-1}$  as above and  such that $g(V)
 = {\pi}(r,n,d)>0$ is called a {\it Castelnuovo variety}. Note that in this case necessarily  $d \geq  r(n-1)+2$.
 
 \begin{rem}{\rm
 The bound (\ref{F:boundgenusCH}) can be generalized to more general objects than projective  varieties. Indeed, a basic result of web geometry says that  the bound $\rk( W)\leq {\pi}(r,n,d)$ holds for 
the rank $\rk( W)$ of a $r$-codimensional $d$-web $W=W_d(n,r)$ defined on a manifold of dimension $nr$. This result, due to Chern and Griffiths \cite{cherngriffiths}, can be proved by quite elementary methods (see \cite{piriotrepreau} and \cite{pereirapirio}).  Combined with Abel's addition theorem, this  implies  the inequality  $h^0(V,\omega_V)\leq {\pi}(r,n,d)$-- here $\omega_V$ denotes the sheaf of abelian  differential $r$-forms on $V$, see \cite{kunz,lipman,barlet,henkinpassare}--
as soon as $V$ is such that its generic 0-dimensional linear section is in general position in its span, which  is stronger than (\ref{F:boundgenusCH}). }
\end{rem}
 
The classification of projective curves of maximal genus has been obtained by Castel\-nuovo in 1889. More recently, in \cite{harris},  Harris proved the following result.

\begin{prop} Let $V\subset\p^{n+r-1}$ be a Castelnuovo variety of dimension $r\geq 1$ and codimension  at least  $2$. The linear system $| \mathcal I_V(2) |$ cuts out a variety of minimal degree $Y\subset\p^{n+r-1}$ of dimension $r+1$.
\end{prop}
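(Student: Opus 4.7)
The plan is to proceed by induction on the dimension $r$ of $V$, with the base case $r=1$ being Castelnuovo's classical theorem: an irreducible non-degenerate curve $V\subset \p^{n}$ of degree $d$ attaining the genus bound $g(V)=\pi(1,n,d)$ is contained in a surface of minimal degree $n-1$, and this surface is precisely what the quadrics through $V$ cut out. The classical argument rests on \emph{Castelnuovo's Lemma} applied to a general hyperplane section: the $d$ points of $V\cap H$ lie in uniform position in $\p^{n-1}$, and equality in $g(V)=\pi(1,n,d)$ forces them to impose only $2n-1$ conditions on quadrics, hence to lie on a rational normal curve; varying $H$ sweeps out the desired surface of minimal degree, which is defined by the quadrics through $V$.

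For the inductive step, assume the statement holds in dimension $r-1\geq 1$, and let $V\subset \p^{n+r-1}$ be a Castelnuovo variety of dimension $r\geq 2$. The first step is to cut by a general hyperplane $H$ and set $V'=V\cap H\subset H\cong \p^{n+r-2}$. Using uniform position of general hyperplane sections and the explicit recursion satisfied by the function $\pi(r,n,d)$ of \eqref{E:castelnuovoHarrisBound}, one checks that $g(V')\leq \pi(r-1,n,d)$, and that attaining $g(V)=\pi(r,n,d)$ forces this inequality to be an equality. Thus $V'\subset H$ is itself a Castelnuovo variety of dimension $r-1$ and codimension $n-1$, and by the inductive hypothesis $|\mathcal I_{V'/H}(2)|$ cuts out a variety $Y'\subset H$ of minimal degree $n-1$ and dimension $r$.

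The crucial step is now to lift quadrics from $H$ back to $\p^{n+r-1}$. From the short exact sequence
\[
0 \longrightarrow \mathcal I_V(1) \longrightarrow \mathcal I_V(2) \longrightarrow \mathcal I_{V'/H}(2) \longrightarrow 0
\]
and the non-degeneracy of $V$ (which gives $H^0(\mathcal I_V(1))=0$), the restriction map
\[
H^0\bigl(\p^{n+r-1},\,\mathcal I_V(2)\bigr)\longrightarrow H^0\bigl(H,\,\mathcal I_{V'/H}(2)\bigr)
\]
is injective; the substantial point is that it is also surjective, that is, every quadric in $H$ vanishing on $V'$ extends to a quadric in $\p^{n+r-1}$ vanishing on $V$. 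Granting this, and letting $Y\subset \p^{n+r-1}$ be the scheme cut out by $|\mathcal I_V(2)|$, we obtain $Y\cap H=Y'$, so $Y$ is irreducible of dimension $r+1$ and degree $\deg Y'=n-1$, hence is a variety of minimal degree containing $V$.

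I expect the main obstacle to be precisely the surjectivity of the restriction map in the previous step, together with the cohomological bookkeeping that ensures $V'$ remains Castelnuovo after a hyperplane cut. Both ultimately reduce, through iterated hyperplane sections, to the sharpness of the original Castelnuovo Lemma on $0$-dimensional linear sections in uniform position, and both rely on the explicit combinatorial identities obeyed by $\pi(r,n,d)$ in \eqref{E:castelnuovoHarrisBound}; in particular, the surjectivity reflects the rigidity of the Hilbert function forced by equality in the Castelnuovo--Harris bound.
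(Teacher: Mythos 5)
The paper itself offers no proof of this Proposition --- it is quoted from Harris \cite{harris} --- so there is no in-house argument to measure yours against; I can only assess the proposal on its own terms. Your outline does follow the standard strategy behind Harris's theorem (linear sections in uniform position, Castelnuovo's Lemma, lifting of quadrics), but as written it is a plan rather than a proof: the two steps you yourself flag as ``the main obstacle'' are exactly where all of the content lies, and neither is routine. First, the claim that $V'=V\cap H$ is again Castelnuovo does not follow from ``uniform position plus a recursion for $\pi(r,n,d)$'': adjunction relates $K_{V'}$ to $K_{V}(1)$ restricted to $V'$, not to $K_V$ itself, so $g(V)$ and $g(V')$ are only linked through the whole tower of successive linear sections down to the zero-dimensional one and the lower bounds on its Hilbert function. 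Showing that extremality of $V$ forces extremality of $V'$ amounts to re-running the entire proof of the Castelnuovo--Harris bound with every intermediate inequality tracked; it is not the formal consequence your second paragraph suggests. Second, surjectivity of $H^0(\mathcal I_V(2))\to H^0(\mathcal I_{V'/H}(2))$ requires $H^1(\mathcal I_V(1))=0$, i.e.\ linear normality of $V$, which must itself be extracted from extremality (for instance: otherwise $V$ would be an isomorphic projection of a non-degenerate variety of the same degree and genus in $\p^{n+r}$, contradicting the strict monotonicity of $\pi(r,\cdot,d)$); this argument is absent.

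There is also a genuine problem at the base of your induction. Castelnuovo's Lemma requires the general zero-dimensional section to consist of at least $2(n-1)+3$ points, whereas for $r=1$ the Castelnuovo condition only forces $d\geq n+1$, and in that range the conclusion can actually fail: an elliptic normal curve of degree $n+1$ in $\p^{n}$ is a Castelnuovo curve in the sense of the paper ($g=1=\pi(1,n,n+1)$), yet the quadrics through it cut out the curve itself (a complete intersection of two quadrics for $n=3$, the Pfaffian quintic for $n=4$), not a surface of minimal degree. So the statement you want to induct on is not available for all one-dimensional Castelnuovo varieties, and the induction cannot be anchored at $r=1$ without a supplementary degree hypothesis. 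Harris's argument sidesteps this by working directly with the general zero-dimensional linear section of the $r$-dimensional $V$, whose cardinality $d\geq r(n-1)+2$ is large enough once $r\geq 2$; if you want to keep the hyperplane-by-hyperplane induction, you must carry such a degree bound through the inductive hypothesis explicitly.
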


Thus a Castelnuovo variety $V\subset\p^{n+r-1}$ of dimension $r$ is a divisor in the variety of minimal degree $Y\subset\p^{n+r-1}$ cut out by $| \mathcal I_V(2) |$. This property  was used by  Harris to describe
 Castelnuovo varieties (see also the refinements by Ciliberto in \cite{ciliberto}): if $p:\tilde{Y}\rightarrow Y$ denotes a desingularization (obtained for instance by blowing-up the vertex of the cone $Y$ when it is singular), Harris determines the class $[\tilde{V}]$ of $\tilde{V}$ (the strict transform of $V$ in $\tilde{Y}$ via $p$) in the Picard group of $\tilde{Y}$. Assuming (to simplify) that $\tilde{V}$ is smooth, let  $L_V=K_{\tilde{Y}}+\tilde{V}$.
By adjunction theory,    there is a short exact  sequence of sheaves 
$$0 \rightarrow K_{\tilde{Y}} \rightarrow K_{\tilde{Y}}(\tilde{V}) \rightarrow K_{\tilde{V}} \rightarrow 0.$$

 Since $h^0( \tilde{Y},K_{\tilde{Y}}   )=h^1(\tilde{Y},K_{\tilde{Y}})=0$ (because $\tilde{Y}$ is smooth and rational),  the map 
\begin{equation}
H^0(\tilde{Y},L_V)\rightarrow H^0(\tilde{V},K_{\tilde{V}})
\end{equation}
is an isomorphism. Thus it induces  rational maps $   \phi_{V}=  \phi_{|K_V|}$ and  $\Phi_{V}=\Phi_{|L_{V}|} \circ p^{-1}$ such that the following diagram of rational maps is commutative: 
\begin{equation}
\label{F:diagMODEL}
  \xymatrix@R=0.5cm@C=0.7cm{ \;\; {V}  \;    \ar@{^{(}->}[d]  \ar@{-->}[rr]^{\phi_{V}\quad } && \;{\mathbb P}^{\pi(r,n,d)-1} \ar@{=}[d] \\
Y \;      \ar@{-->}[rr]^{\Phi_{V}\quad } && \; {\mathbb P}^{\pi(r,n,d)-1} \; .
} 
\end{equation}
 
Let $X_V$ be  (the closure of)  the image of $\Phi_{V}$. It is an irreducible non-degenerate subvariety in  ${\mathbb P}^{\pi(r,n,d)-1}$ and $\dim(X_V)=\dim(Y)=r+1$. Moreover, one proves that the image by $\Phi_V$ of  a generic 1-dimensional     linear section of $Y$, that is of  a rational normal curve of degree $n-1$ passing through $n$ general points of  $Y$, is a rational normal curve of degree $\dd=d-r(n-1)-2$  contained in $X_V$.  
Thus $X_V\subset\p^{\pi(r,n,d)-1}
=\p^{\overline{\pi}(r,n,\dd)-1}$ is an example of $\overline{X}(r+1,n,\dd)$.
These examples will be called  {\it models associated to the Castelnuovo variety $V\subset\p^{n+r-1}$} 
and also  {\it models of Castelnuovo type}. 
 They are described in detail in \cite{PT} where the authors also prove the following result.
 \begin{thm}
 Let $X\in \overline{X}(r+1,n,\delta)$. If $\delta\neq 2n-3$ then $X$ is of Castelnuovo type.
 \end{thm}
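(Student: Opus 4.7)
My plan is to proceed by induction on $n \geq 2$, using the tangential projection from a general point to reduce to smaller values of $n$.

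\textbf{Base cases.} When $n = 2$, Theorem \ref{T:strongbompiani} identifies $X$ with the Veronese $\nu_\delta(\p^{r+1})$, which is a Castelnuovo model with $Y = \p^{r+1}$. When $\delta = n - 1$ (for any $n \geq 2$), Theorem \ref{P:bounddeg} gives $\deg X \leq n-1$, while non-degeneracy forces $\deg X \geq n - 1$; hence $X$ is a variety of minimal degree, one of the Castelnuovo models described in \S\ref{S:varmini}.

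\textbf{Inductive step.} Assume $n \geq 3$, $\delta > n - 1$, and $\delta \neq 2n - 3$. Fix a general point $x \in X$ and let $\pi_x : X \dashrightarrow X_x$ be the tangential projection from the embedded tangent space $T_xX \cong \p^{r+1}$. By Theorem \ref{T:Siena}, the unique rational normal curve $C$ of degree $\delta$ through $x$ and $n - 1$ other general points is smooth at $x$ with $T_xC \subset T_xX$, so $C$ meets $T_xX$ along the length-$2$ divisor $2x$; hence $\pi_x(C)$ is rational of degree $\delta - 2$. Varying the other $n-1$ points yields an $(n-1)$-covering of $X_x$ by irreducible curves of degree $\delta - 2$. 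A dimension count gives
\begin{equation*}
\dim\langle X_x\rangle \,=\, \bigl(\overline\pi(r,n,\delta) - 1\bigr) - (r+2) \,=\, \overline\pi(r, n-1, \delta - 2) - 1,
\end{equation*}
so $X_x \in \overline X(r+1, n-1, \delta - 2)$; Theorem \ref{T:Siena} combined with Lemma \ref{L:Siena} makes $\pi_x$ birational onto $X_x$. Since $\delta \neq 2n - 3$ is equivalent to $\delta - 2 \neq 2(n-1) - 3$, the inductive hypothesis applies and $X_x = X_{V'}$ for some Castelnuovo pair $(Y', V')$.

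\textbf{Lifting, and the main obstacle.} The delicate step is lifting this Castelnuovo structure to $X$. My plan is to resolve $\pi_x$ by blowing up $x$, obtaining $\tilde X \to X$ with exceptional divisor $E \cong \p^r$ and an induced morphism $\tilde X \to X_x$; then reconstruct the minimal-degree variety $Y$ as the total space of the one-parameter family of rational normal curves of degree $\delta$ on $X$ through $x$, with $V$ the strict transform of $V'$ under the lift. The point to verify is that, for $\delta \neq 2n - 3$, the osculating geometry is rich enough (either $\delta = n-1$ or $\rho \geq 2$) that the adjoint system $|K_{\tilde Y} + \tilde V|$ pulls back to the hyperplane system on $X$ via the diagram \eqref{F:diagMODEL}. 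I expect this to be the technical heart of the argument: one must show that the pencil of rational normal curves of degree $\delta$ through $x$ assembles coherently into a variety of minimal degree of dimension $r+2$, and that the resulting pair reproduces $X$ exactly. The exceptional regime $\delta = 2n - 3$ is precisely where the defect $\delta - (2n-3)$ is invariant under tangential projection and $\rho = 1$, so the only osculating data available is the tangent space itself; this information is insufficient to pin down a Castelnuovo pair, in agreement with the existence of non-Castelnuovo examples of Jordan type (\S\ref{S:JordanAlgebra}).
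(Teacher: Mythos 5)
A preliminary remark: the paper does not actually prove this theorem --- it is quoted as the main result of \cite{PT} (listed as ``in preparation'') --- so there is no in-paper proof to compare yours against, and your proposal must stand on its own. It does not, for two reasons. First, the inductive step contains a false dimension count. The displayed identity $\overline\pi(r,n,\delta)-(r+2)=\overline\pi(r,n-1,\delta-2)$ fails in general: take $r=1$, $n=3$, $\delta=5$, so $\rho=2$, $m=2$, $m'=0$ and $\overline\pi(1,3,5)=2\binom{4}{2}=12$, whereas $\overline\pi(1,2,3)=\binom{5}{2}=10\neq 12-3$. Concretely, the tangential projection of a $\overline X(2,3,5)\subset\p^{11}$ spans only a $\p^{8}$, while an extremal $\overline X(2,2,3)$ must span $\p^{9}$; the image is a non-extremal $X(2,2,3)$ and the inductive hypothesis cannot be applied to it. Your identity holds precisely when $\rho=1$, i.e.\ $n-1\le\delta\le 2n-3$, which is essentially the range the theorem excludes. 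For $\rho\ge 2$ the correct reduction is not the tangential projection but the osculating projection of order $\rho$ (or $\rho-1$ when $m=1$), exactly as in the proof of Theorem \ref{T:Siena}: projecting from $Osc^{\rho}_X(x)$ drops the ambient dimension by $\binom{r+1+\rho}{r+1}$ and the degree by $\rho+1$, and one checks that $\overline\pi(r,n,\delta)-\binom{r+1+\rho}{r+1}=\overline\pi(r,n-1,\delta-\rho-1)$ whenever $m\ge 2$. With that substitution the descent to the minimal-degree base case does go through.

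The second and larger gap is the one you flag yourself: the entire content of the theorem is the ``lifting'' step, namely reconstructing a Castelnuovo pair $(Y,V)$ for $X$ from the Castelnuovo structure of its projection --- equivalently, assembling the $(r+2)$-dimensional variety of minimal degree $Y$ from the covering family and exhibiting $X$ as the adjoint image of a divisor of maximal geometric genus via the diagram \eqref{F:diagMODEL}. You describe a plan for this but execute none of it, and nothing in the present paper can be cited to fill it in, since the relevant arguments live in \cite{PT}. As written, the proposal is an outline with a broken reduction step and an unexecuted main step, not a proof; repairing the reduction is routine, but the lifting argument still has to be supplied in full.
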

 
It follows that varieties $\overline{X}(r+1,n,\delta)$ not of Castelnuovo type can exists only for $\delta=2n-3$. 
In the sequel, we shall focus on the case $n=3$ and  present some examples of varieties $\overline{X}(r+1,3,3)$ not of Castelnuovo constructed from Jordan algebras.

\subsection{Models of Jordan type associated to  cubic Jordan algebras} 
\label{S:JordanAlgebra}
Recall that a (complex) {\it Jordan algebra} is a $\mathbb C$-vector space  $\mathbb J$  with a $\mathbb C$-bilinear product  $ \mathbb J \times \mathbb J \rightarrow 
\mathbb J$  verifying 
$$
\qquad 
 x^2 \, (y \,x)= 
  (x^2 \,y) \, x
 \qquad \mbox{ for all\;} 
x,y\in \mathbb J\, . 
$$
We will restrict here to the case of  commutative Jordan algebras of finite dimension admitting a unit,   denoted by $e$.  
A classical result in this theory  ensures that a Jordan algebra is  power-associative: for every $x\in \mathbb J$ and any $k\in \mathbb N$, the $k$th-power $x^k$ of $x$ is well-defined. 
This allows us to define   the {\it rank}  of  $ \mathbb J$, denoted by $\rk(\mathbb J)$: it is the dimension (as a complex vector space) of the subalgebra $\mathbb  C[ x]= {\rm Span}_{\scriptstyle{\mathbb C}}\langle x^k\, | {\,  k\in \mathbb N} \rangle$ generated by a general element $x\in \mathbb J$.  Let $m=\rk(\mathbb J)$ and let $\ell_x$ be the restriction of the multiplication by $x$ to  $\mathbb C[ x ]$ and let   $M_x$ be  its associated  minimal polynomial.  Then the relation $M_x(\ell_x)e=\mathbf 0$  expands to
\begin{equation}
\label{polymini}
 x^m-\sigma_1(x)\,x^{m-1}+\cdots+(-1)^m\, \sigma_m(x)\,e=\mathbf 0, 
\end{equation}
where $x\mapsto \sigma_i(x)$ is a homogeneous polynomial map of degree $i$ on $\mathbb J$  (for every $i=1,\ldots,m$). 
By definition, (\ref{polymini}) is the {\it generic minimum polynomial} of the Jordan algebra $\mathbb J$ ({\it cf.} \cite[Proposition II.2.1]{FK}).  Its {\it generic trace} is the linear map $T:x\mapsto T(x)=\sigma_1(x)$ and the homogeneous polynomial map  $N:x\mapsto N(x)=\sigma_m(x)$ of degree $m$ is its {\it generic norm}.  The latter is {\it multiplicative} in the following sense: for all $y\in \mathbb J $,  we have 
$N(x\,x')=N(x)\,N(x')$ 
for every $x,x'\in \mathbb C[y]$ (see \cite[Proposition II.2.2]{FK}).

 One defines the {\it adjoint} $x^\#$ of an element $x\in \mathbb J$ by setting 
  $$x^\#=\sum_{i=0}^{m-1} \sigma_{i}(x) (-x)^{m-1-i}$$
  where $\sigma_0=1$. It follows from 
(\ref{polymini}) that  $x \,x^\#=x^\#x=N(x)\,e$ so that  $N(x)\neq 0$ implies that  $x$ is invertible (for the Jordan product) with inverse $x^{-1}={N(x)}^{-1}x^\#$.
\smallskip 
 
The map $i:x\mapsto x^{-1}$ is a birational involution on $\mathbb J$. 
Let $\Str(\mathbb J)$ be the set of $g\in GL(\mathbb J)$ such that 
$g\circ i\equiv i\circ h$ (as birational maps on $\mathbb J$) for a certain 
$h\in GL(\mathbb J)$. When it exists, such a $h$ is unique: by definition, it is the {\it adjoint} of $g$ and is denoted by $g^\#$. One proves (see \cite{springer}) that $\Str(\mathbb J)$ is a closed algebraic subgroup of $GL(\mathbb J)$, called the {\it structural group} of the  Jordan algebra $\mathbb J$. Moreover, $g\mapsto g^\#$ is an automorphism of $\Str(\mathbb J)$ and there exists a character $\eta:\Str(\mathbb J)\rightarrow \mathbb C^*$ such that 
$ N(g(x))=\eta_g\,N(x) $ for all $x\in \mathbb J$ ({\it cf.} \cite[Prop.1.5]{springer}).  \smallskip

The complex vector space  $\mathcal Z_2(\mathbb J)$ of {\it Zorn's matrices}  is defined by
$$ \mathcal Z_2(\mathbb J)= \bigg\{    
 \begin{pmatrix}
  s & x \\y & t
 \end{pmatrix}\; \Big| 
\begin{tabular}{c}
 $ s,t \in \mathbb C $\\
$x,y \in \mathbb J$
\end{tabular}
\bigg\} . $$

 Assuming from now on that $\mathbb J$ is of rank 3, one defines the  {\it twisted cubic  associated to}  $\mathbb J$  (noted by $X^3_\mathbb J$) as the (Zariski)-closure in $\mathbb P \, \mathcal Z_2(\mathbb J)$ of the image of the polynomial affine embedding 
\begin{align*}
 \label{D:mapcubic}
\nu_3\, : \; 
&\mathbb J   \longrightarrow  \; \mathbb P \, \mathcal Z_2(\mathbb J)
 \\
&x   \longmapsto 
\begin{bmatrix}
  1 & x \\ \,x^\# & N(x)
 \end{bmatrix}.
\end{align*}
Then  the Zariski-closure of ${ \nu_3( \mathbb C\,e)}$ in $\mathbb P \, \mathcal Z_2(\mathbb J)$  is a rational normal curve of degree 3  included in $X^3_\mathbb J$ that we denote by 
$C_{\mathbb J}$:
\begin{equation*}
 C_{\mathbb J}=\bigg\{ 
\begin{bmatrix}
  1 & te  \\ 
t^2e & t^3
\end{bmatrix} \, \bigg| \, t\in \mathbb C \bigg\}
\bigcup 
\bigg\{ \begin{bmatrix}
  0 & 0  \\ 0 &1
\end{bmatrix}
 \bigg\}\,.
\end{equation*}
It contains the following three points 
\begin{equation*}
 {0}_{\mathbb J}=\nu_3(0)=\begin{bmatrix}
  1 & 0 \\ 0 & 0
 \end{bmatrix}\,, \; \quad 
{1}_{\mathbb J}=\nu_3(e)=\begin{bmatrix}
  1 & e \\ e & 1
 \end{bmatrix}\quad 
\mbox{ and }\quad 
{\bf \infty}_{\mathbb J}=\begin{bmatrix}
  0 & 0 \\ 0 & 1
 \end{bmatrix}.
\end{equation*}
\medskip
 
 
Recall that  $i$ stands for  the inverse map $x\mapsto x^{-1}$. From now on we shall suppose  $g\in \Str(\mathbb J)$.   For a fixed $\omega\in \mathbb J$, let $t_\omega: \mathbb J \rightarrow \mathbb J$ be  the translation $x\mapsto x+\omega$ and denote by  $\#$  the bilinear map associated to   $x^\#$: one has 
 $ x\#y=(x+y)^\#-x^\#-y^\#$ for $x,y\in \mathbb J$.

For  $\omega \in \mathbb J$ and $g\in {\rm Str}(\mathbb J)$,  one sets for
every $M= \big[ \!\!\!
\begin{tabular}{c}
 $\, s\; x$ \vspace{-0.1cm}\\
$y\; t$
\end{tabular} \!\!\! \big]
\in \mathbb P \, \mathcal Z_2(\mathbb J)$: 
\begin{align*}
I\big( M ) = & 
\begin{bmatrix}
  t & y \\
  x & s
 \end{bmatrix}, \\  
G_g(M)=&
\begin{bmatrix}
 s & g(x) \\
 \eta_g g^{\#}(y) & \eta_g\,t
 \end{bmatrix}  \\ 
\mbox{and}
 \quad T_\omega(M)= & 
\begin{bmatrix}
  s & x+s\,\omega  \\
  y+ \omega\#x+s\,\omega^\# & t+T(y\,\omega)+T(x\,\omega^\#)+s\,N(\omega) 
 \end{bmatrix}.
\end{align*}
 These maps  are projective automorphisms of $ \mathbb P \, \mathcal Z_2(\mathbb J)$.
\smallskip

 The proof of  the following lemma is straightforward and left to the reader. 
\begin{lemma}
\label{L:automC3J}
For every $\omega \in \mathbb J$ and for every $g\in \Str(\mathbb J)$, we have  
\begin{equation}
 \nu_3\circ  i= I\circ \nu_3\,  , \quad 
\nu_3\circ  g=G_g  \circ \nu_3 \quad 
\mbox{ and } \quad 
\nu_3\circ  t_\omega= T_\omega \circ \nu_3\, .
\end{equation}
Consequently, the maps $I$, $G$ and $T_\omega$ are projective automorphisms of the  cubic $X^3_\mathbb J$. 
\end{lemma}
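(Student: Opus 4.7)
The plan is to verify the three intertwining identities by direct computation on $\nu_3(\mathbb J)$ and then deduce the ``consequently'' clause from the Zariski density of $\nu_3(\mathbb J)$ in $X^3_{\mathbb J}$. For each identity, I would expand both sides explicitly in the Zorn matrix model and reduce the equality to an algebraic identity in $\mathbb J$. Starting with $\nu_3 \circ i = I\circ \nu_3$, the matrix $I(\nu_3(x))$ has diagonal entries $(N(x),1)$ and off-diagonal entries $(x^\#,x)$, which I would compare projectively with $\nu_3(x^{-1})$ by scaling the latter by $N(x)$. This reduces the claim to the two $\mathbb J$-identities $N(x)\cdot x^{-1}=x^\#$ and $N(x)\cdot (x^{-1})^\# = x$, both immediate from the fundamental relation $x\cdot x^\# = N(x)\,e$ together with the multiplicativity of $N$ on $\mathbb C[x]$, which also yields $N(x)\cdot N(x^{-1}) = 1$.

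For $\nu_3\circ g = G_g\circ \nu_3$ with $g\in \Str(\mathbb J)$, the top row and the norm entry are immediate from the linearity of $g$ and the defining relation $N(g(x))=\eta_g N(x)$, so the only real content is the sharp identity $(g(x))^\# = \eta_g\, g^\#(x^\#)$. My plan is to derive this from the defining relation $g\circ i = i\circ g^\#$ by evaluating on a generic $x$ with $N(x)\neq 0$ and substituting $u^{-1}=N(u)^{-1}u^\#$ on both sides; after clearing denominators the equation rearranges into $(g^\#(x))^\# = \eta_{g^\#}\,g(x^\#)$. Swapping the roles of $g$ and $g^\#$, and using that $^\#$ is an involutive automorphism of $\Str(\mathbb J)$ with $(g^\#)^\#=g$, then yields the required identity. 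I expect this rearrangement to be the main technical point of the lemma.

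For $\nu_3\circ t_\omega = T_\omega\circ \nu_3$, the verification reduces to the two polarization identities $(x+\omega)^\# = x^\# + \omega\#x + \omega^\#$, which is simply the definition of the polar form of the quadratic adjoint, and $N(x+\omega)=N(x)+T(x^\#\omega)+T(x\,\omega^\#)+N(\omega)$, the standard Taylor expansion of the cubic form $N$ whose polar forms can be read off from the generic minimum polynomial \eqref{polymini} using $\sigma_1=T$ and $\sigma_3=N$. This is a direct computation. Once the three intertwining identities are established, the last assertion is formal: $i$ is a birational self-map of $\mathbb J$ while $g$ and $t_\omega$ are bijections, so the identities show that $I$, $G_g$ and $T_\omega$ preserve $\nu_3(\mathbb J)$ wherever they are defined, and being restrictions of projective linear isomorphisms of $\mathbb P\,\mathcal Z_2(\mathbb J)$, they descend to projective automorphisms of the Zariski-closure $X^3_{\mathbb J}$.
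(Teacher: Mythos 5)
The paper gives no proof of this lemma (it is explicitly ``left to the reader''), and your direct verification is exactly the intended argument: each identity reduces, entry by entry in the Zorn matrix, to the standard cubic Jordan algebra identities you list, and the Zariski density of $\nu_3(\mathbb J)$ in $X^3_{\mathbb J}$ gives the final clause. All your reductions check out --- in particular $(g(x))^\#=\eta_g\, g^\#(x^\#)$ does follow from the defining relation $g\circ i=i\circ g^\#$ after the swap $g\leftrightarrow g^\#$, the involutivity of $\#$ on $\Str(\mathbb J)$ being the one-line consequence of $i\circ i=\mathrm{id}$, and the expansion $N(x+\omega)=N(x)+T(x^\#\omega)+T(x\,\omega^\#)+N(\omega)$ is the standard polarization of the generic norm.
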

 
Let $\Conf({\mathbb J})$ be the {\it conformal group} of $\mathbb J$, that is the subgroup of $PGL( \mathcal Z_2(\mathbb J))$ generated by $I$ and the maps $G_g$ and $T_\omega$   for all $\omega \in \mathbb J$ and all $g\in \Str(\mathbb J)$.  From  Lemma \ref{L:automC3J}, it follows that  $\Conf({\mathbb J})$ is a subgroup of the group of projective automorphisms of $X^3_\mathbb J$.
 
\begin{prop}
\label{P:3uplettransitif}
 The group of projective automorphisms of $X^3_\mathbb J$ acts transitively on 3-uple's  of general points of $X^3_\mathbb J$. In fact, if $x_1,x_2,x_3\in \mathbb J$ are sufficiently general, it exists $\mu \in \Conf({\mathbb J}) $ such that $\mu(\nu_3(x_1))=0_{\mathbb J}$, $\mu(\nu_3(x_2) )=1_{\mathbb J}$ and $\mu(\nu_3(x_3))=\infty_{\mathbb J}$. 
\end{prop}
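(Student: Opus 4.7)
I would construct $\mu$ as an explicit composition of the three types of generators of $\Conf(\mathbb J)$ provided by Lemma \ref{L:automC3J}, normalizing one of the three points at each step.

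\emph{Step 1 (send $\nu_3(x_3)$ to $0_{\mathbb J}$).} Apply the translation $T_{-x_3}$. By Lemma \ref{L:automC3J} it acts on $\nu_3$ by $t_{-x_3}$, so $\nu_3(x_3)\mapsto 0_{\mathbb J}$ and $\nu_3(x_i)\mapsto \nu_3(x_i-x_3)$ for $i=1,2$.

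\emph{Step 2 (send $0_{\mathbb J}$ to $\infty_{\mathbb J}$).} Apply $I$. A direct computation gives $I(0_{\mathbb J})=\infty_{\mathbb J}$, and by Lemma \ref{L:automC3J}, $I(\nu_3(x_i-x_3))=\nu_3((x_i-x_3)^{-1})$ as soon as $x_i-x_3$ is invertible, i.e.\ $N(x_i-x_3)\neq 0$; this defines a Zariski-open condition on $(x_1,x_2,x_3)$.

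\emph{Step 3 (restore the first point to $0_{\mathbb J}$).} Apply $T_{-(x_1-x_3)^{-1}}$. A direct computation on the explicit formula for $T_\omega$ shows that $T_\omega(\infty_{\mathbb J})=\infty_{\mathbb J}$ for every $\omega$. By Lemma \ref{L:automC3J} again the other two points become $0_{\mathbb J}$ and $\nu_3(y)$ with
\[
y=(x_2-x_3)^{-1}-(x_1-x_3)^{-1}.
\]

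\emph{Step 4 (send $\nu_3(y)$ to $1_{\mathbb J}$).} A direct computation shows that for every $g\in \Str(\mathbb J)$ the projective automorphism $G_g$ fixes $0_{\mathbb J}$ and $\infty_{\mathbb J}$. It therefore suffices to produce $g\in \Str(\mathbb J)$ with $g(y)=e$, for then $G_g(\nu_3(y))=\nu_3(g(y))=\nu_3(e)=1_{\mathbb J}$.

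The heart of the proof is to show that the $\Str(\mathbb J)$-orbit of $e$ contains a Zariski-open dense subset of $\mathbb J$. Since it is a constructible subset invariant under the action, it is enough to see that it has non-empty interior. For this I would use the quadratic representation: for every invertible $z\in \mathbb J$ the linear map $U_z:v\mapsto 2z(zv)-z^2 v$ (defined purely in terms of the Jordan product) belongs to $\Str(\mathbb J)$, with adjoint $U_z^{\#}=U_{z^{-1}}$, as follows from the fundamental identity $(U_z(v))^{-1}=U_{z^{-1}}(v^{-1})$, i.e.\ $i\circ U_z=U_{z^{-1}}\circ i$. A direct expansion gives $U_z(e)=z^2$, so the orbit of $e$ contains the image of the squaring map $\sigma:z\mapsto z^2$ on $\mathbb J$. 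But $d\sigma_{|e}=2\,\mathrm{Id}_{\mathbb J}$, so $\sigma$ is dominant and its image contains a Zariski-open dense subset of $\mathbb J$. Consequently, for $(x_1,x_2,x_3)$ in a suitable Zariski-open subset of $\mathbb J^3$ the element $y$ above lies in the $\Str(\mathbb J)$-orbit of $e$, and the required $g$ exists. The composition $\mu=G_g\circ T_{-(x_1-x_3)^{-1}}\circ I\circ T_{-x_3}$ then lies in $\Conf(\mathbb J)$ and has the desired effect.

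The routine part is Steps 1--3, which reduce to straightforward computations with the formulas for $I$, $T_\omega$, $G_g$. The main obstacle is Step 4, namely the density of the $\Str(\mathbb J)$-orbit of $e$; I would handle it by invoking the quadratic representation $U_z$, which requires checking the (standard) identity $i\circ U_z=U_{z^{-1}}\circ i$, and then by a dimension count via the differential of $z\mapsto z^2$ at $e$.
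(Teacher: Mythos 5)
Your construction is correct and, up to relabelling the three points, is exactly the paper's: the authors also take $\mu=(G_g)^{-1}\circ T_{-(y_1)^{-1}}\circ I\circ T_{-x}$ with $y_1=y-x$ invertible and $g(e)=z_1^{-1}-y_1^{-1}$, i.e. the same chain translation--inversion--translation--structural map, with the same genericity conditions (invertibility of the two differences, and membership of their "cross-ratio" in the orbit of $e$). The one place where you genuinely diverge is the key input for Step 4: the paper simply quotes \cite[Theorem 6.5]{springer} to the effect that $\mathscr G=\Str(\mathbb J)\cdot e$ is Zariski-open in $\mathbb J$, whereas you prove what you need from scratch, by exhibiting the quadratic representation $U_z\in\Str(\mathbb J)$ via the identity $i\circ U_z=U_{z^{-1}}\circ i$, noting $U_z(e)=z^2$, and concluding that the orbit contains the (dense, by the computation $d\sigma_{|e}=2\,\mathrm{Id}$) image of the squaring map, hence is open dense since an orbit is open in its closure. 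Your route is self-contained and more elementary, at the cost of invoking the standard inverse identity for $U_z$ (which does require proof in a general Jordan algebra, though it is classical); the paper's citation buys brevity and the slightly stronger statement that the orbit is literally the set of invertible-type elements where the construction works. Both yield the same Zariski-open set of admissible triples, so the argument is complete either way.
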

 
\begin{proof} First let $\mathscr G={\rm Str}(\mathbb J)\cdot e$ be the orbit in $\mathbb J$ of the unit $e$  under the linear action of 
the structural group.
By \cite[Theorem 6.5]{springer},  one knows that $\mathscr G$ is a Zariski-open subset of $\mathbb J$. 
 
Assume that $x,y,z\in \mathbb J$ are such that (1) $y_1=y-x$ and $z_1=z-x$ are  invertible; (2) $z_2=z_1^{-1}-y_1^{-1}\in \mathscr G$, {\it i.e} $z_2=g(e)$ for a certain $g\in \Str(\mathbb J)$. Then set 
$$\mu=
(G_g)^{-1}
\circ
T_ {
-(y_1)^{-1}} \circ  I\circ T_ {
-x} \in \Conf({\mathbb J}) . $$
We leave to  the reader to verify that $\mu(\nu_3(x))=\infty_{\mathbb J}$, $\mu(\nu_3(y))=0_{\mathbb J}$ and  $\mu(\nu_3(z))=1_{\mathbb J}$. Since $\nu_3(\mathbb J)$ is dense in $X^3_\mathbb J$,  the conclusion
follows.
\end{proof}
 
Let $x_1,x_2,x_3 \in \mathbb  J$ and  let $\mu\in \Conf({\mathbb J})$ 
be as in the statement of Proposition \ref{P:3uplettransitif}. Since $\mu\in PGL(\mathcal Z_2(\mathbb J))$, the curve $\mu^{-1}(C_{\mathbb J})$ is a rational normal curve of degree 3 passing through the points $\nu_3(x_i)$ for $i=1,2,3$. Since $\mu$ is a projective  automorphism of $X^3_\mathbb J$ (by Lemma \ref{L:automC3J} above), this twisted cubic curve is also contained in $X^3_\mathbb J$. Thus  we have  proved the
following result.

\begin{coro}
 The twisted cubic $X^3_\mathbb J$ associated to a Jordan algebra $\mathbb J$ of rank three is 3-covered by rational normal curves of degree 3. 
\end{coro}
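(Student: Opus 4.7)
The plan is to combine the transitivity statement of Proposition \ref{P:3uplettransitif} with the fact that the canonical rational normal curve $C_{\mathbb J}$ sits inside $X^3_{\mathbb J}$ and already passes through the three distinguished points $0_{\mathbb J}$, $1_{\mathbb J}$, $\infty_{\mathbb J}$. So the entire argument amounts to transporting $C_{\mathbb J}$ by a suitable element of the conformal group $\Conf(\mathbb J)$.

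Concretely, I would proceed as follows. First, let $p_1,p_2,p_3$ be three general points of $X^3_{\mathbb J}$. Since $\nu_3(\mathbb J)$ is Zariski-dense in $X^3_{\mathbb J}$, after possibly shrinking the open set of allowed triples I may write $p_i=\nu_3(x_i)$ for $i=1,2,3$, with $(x_1,x_2,x_3)\in \mathbb J^3$ generic. By Proposition \ref{P:3uplettransitif}, there exists $\mu\in \Conf(\mathbb J)$ with $\mu(p_1)=0_{\mathbb J}$, $\mu(p_2)=1_{\mathbb J}$, $\mu(p_3)=\infty_{\mathbb J}$.

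Next, I would exploit two facts about $\mu$. On the one hand, by construction $\mu\in PGL(\mathcal Z_2(\mathbb J))$, so it maps rational normal curves of degree $3$ in $\mathbb P\mathcal Z_2(\mathbb J)$ to rational normal curves of degree $3$. On the other hand, Lemma \ref{L:automC3J} guarantees that $\mu$ restricts to a projective automorphism of $X^3_{\mathbb J}$. Applying $\mu^{-1}$ to the canonical twisted cubic $C_{\mathbb J}\subset X^3_{\mathbb J}$, which contains $0_{\mathbb J},1_{\mathbb J},\infty_{\mathbb J}$, produces a rational normal curve of degree $3$ contained in $X^3_{\mathbb J}$ and passing through $p_1,p_2,p_3$. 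This exhibits the required $3$-covering.

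There is essentially no serious obstacle here: all the nontrivial work has been done in setting up the action of $\Conf(\mathbb J)$ on $X^3_{\mathbb J}$ and in establishing the transitive action on generic triples. The only mild care needed is to check that the genericity hypothesis of Proposition \ref{P:3uplettransitif} is compatible with the genericity we need for the $3$-covering property, but this is automatic since $\nu_3(\mathbb J)$ is dense in $X^3_{\mathbb J}$ and $\Conf(\mathbb J)$ preserves $X^3_{\mathbb J}$.
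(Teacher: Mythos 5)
Your argument is correct and coincides with the paper's own proof: both take three general points $\nu_3(x_i)$, apply Proposition \ref{P:3uplettransitif} to obtain $\mu\in\Conf(\mathbb J)$ sending them to $0_{\mathbb J},1_{\mathbb J},\infty_{\mathbb J}$, and observe that $\mu^{-1}(C_{\mathbb J})$ is a rational normal cubic through the given points that stays inside $X^3_{\mathbb J}$ because $\mu$ is simultaneously an element of $PGL(\mathcal Z_2(\mathbb J))$ and an automorphism of $X^3_{\mathbb J}$ by Lemma \ref{L:automC3J}. Nothing to add.
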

 
Let $k$ be the dimension of a Jordan algebra $\mathbb J$ of rank 3. Then $X^3_\mathbb J$ is a non-degenerate algebraic subvariety of the projective space $ \mathbb P  \mathcal Z_2(\mathbb J)$, whose dimension
is $2k+1$. Since $\pi(k,3,3)=2k+2$, one obtains  that $X^3_\mathbb J\subset \mathbb P  \mathcal Z_2(\mathbb J)=\p^{2k+1}$ is an example  of $\overline{X}(k,3,3)$.  Thus the cubics $X^3_\mathbb J$  associated to Jordan algebras of rank 3  are  examples of varieties of type $\overline{X}(k,3,3)$, which will be called  {\it  models of Jordan type}. 
 A consequence of Theorem \ref{Cremonascroll} is that a  $X^3_\mathbb J$ 
 is never of Castelnuovo type.  \medskip
 
 In fact, using Theorem \ref{Cremonascroll}, one can easily prove the following criterion which characterizes  the varieties $\overline{X}(r+1,n,2n-3)$ of Castelnuovo type for arbitrary $n\geq 3$.
 
 \begin{prop} A variety $X=\overline{X}(r+1,n,2n-3)$ is of Castelnuovo type 
 if and only if  for any $n-2$ general point $x_2,\ldots,x_{n-1}\in X$, the intersection
 of the space 
 $\oplus_{i=2}^{n-1}Osc_X^1(x_i)$ with $X$ contains a hypersurface.
 \end{prop}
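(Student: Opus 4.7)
The plan is to exploit the birational projection $p_S : X \dashrightarrow \p^{r+1}$ obtained from the linear space $S := \oplus_{i=2}^{n-1} Osc_X^1(x_i)$. For $\delta = 2n-3$ the relevant parameters are $\rho = 1$, $m = n-1$, $m' = 0$, so the decomposition \eqref{E:decompoPpi2} reads
$$\langle X \rangle \;=\; \bigoplus_{i=1}^{n-1} Osc_X^1(x_i) \;=\; \p^{(n-1)(r+2)-1},$$
each $Osc_X^1(x_i)$ being a $\p^{r+1}$. Thus $S$ has codimension $r+2$, and the argument of Theorem \ref{T:Siena} (for $\rho = 1$, the image of the osculating projection is $\nu_1(\p^{r+1}) = \p^{r+1}$) shows that $p_S : X \dashrightarrow Osc_X^1(x_1) \cong \p^{r+1}$ is birational, with inverse $p_S^{-1}$ defined by a linear system $\Lambda \subseteq |\mathcal O_{\p^{r+1}}(2n-3)|$ of dimension $\overline\pi(r,n,2n-3)-1$. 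The base locus of the linear system $|H_X - S|$ on $X$ defining $p_S$ equals $X \cap S$, so ``$X \cap S$ contains a hypersurface'' is equivalent to $|H_X - S|$ admitting a non-trivial fixed divisor, or equivalently to $\Lambda$ having a common polynomial factor of positive degree.

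For the direction $(\Rightarrow)$, assume $X = X_V$ is of Castelnuovo type, with $\Phi_V : Y \dashrightarrow X$ as in Section \ref{castelnuovovar}, where $Y \subset \p^{n+r-1}$ has minimal degree $n-1$. Set $y_i := \Phi_V^{-1}(x_i)$ for $i = 2, \ldots, n-1$; these are $n-2$ general points of $Y$, and the linear projection $\pi : Y \dashrightarrow \p^{r+1}$ from $\langle y_2, \ldots, y_{n-1}\rangle \cong \p^{n-3}$ is birational (since a general rational normal curve of degree $n-1$ on $Y$ through these points projects to a line). The inverse $\pi^{-1} : \p^{r+1} \dashrightarrow Y \subset \p^{n+r-1}$ is thus given by a linear system of degree $n-1$. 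Comparing covering families, one checks that $p_S^{-1}$ coincides birationally with $\Phi_V \circ \pi^{-1}$ (both send a line through the base point to a rational normal curve in the appropriate covering family). Since $\Phi_V$ re-embeds $Y$ in $\p^{\overline\pi - 1}$ by the complete system $|L_V|$, the composition realises $\Lambda$ as a linear system whose underlying map factors through the degree $(n-1)$ map $\pi^{-1}$; consequently $\Lambda$ has a common factor of degree $(2n-3)-(n-1) = n-2$, whose vanishing locus pulls back to a hypersurface of $X$ contained in $X \cap S$.

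For the direction $(\Leftarrow)$, suppose $\Lambda$ has a common factor $F$ of positive degree $k$, write $\Lambda = F \cdot \Lambda'$ with $\Lambda' \subseteq |\mathcal O_{\p^{r+1}}(2n-3-k)|$, and let $\phi_{\Lambda'} : \p^{r+1} \dashrightarrow \p(\Lambda')$. One must show that the image $Y'$ is an $(r+1)$-dimensional variety of minimal degree $n-1$ in an appropriate $\p^{n+r-1} \subset \p^{\overline\pi(r,n,2n-3)-1}$, that necessarily $k = n-2$, and that $\phi_{\Lambda'}$ plays the role of $\pi^{-1}$ above. Once $Y'$ is identified as a variety of minimal degree and $p_S^{-1}$ is seen to factor through $Y'$, the adjunction-theoretic recipe recalled in Section \ref{castelnuovovar} recovers a Castelnuovo divisor $V \subset Y'$ with $X = X_V$, so $X$ is of Castelnuovo type.

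The main obstacle is the direction $(\Leftarrow)$: translating the mere existence of a common factor into the full Castelnuovo data (a variety of minimal degree together with a Castelnuovo divisor on it) requires a generalization of Theorem \ref{Cremonascroll} from $n=3$ to arbitrary $n$. Concretely, one has to force $k = n-2$ by dimension counting and use the Castelnuovo–Harris bound together with the classification of varieties of minimal degree (Theorem \ref{varmini}) to identify $Y'$.
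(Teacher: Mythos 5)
The paper never writes out a proof of this Proposition: it is stated as an easy consequence of Theorem \ref{Cremonascroll}, so the only thing to measure your proposal against is the mechanism of that theorem -- and your proposal departs from it at its very first step, with an equivalence that is false. You reduce everything to the claim that ``$X\cap S$ contains a hypersurface'' is equivalent to ``the linear system $\Lambda\subseteq|\O_{\p^{r+1}}(2n-3)|$ defining $p_S^{-1}$ has a common polynomial factor''. Test this for $n=3$, where $S=T_xX$ and $\Lambda$ is the system \eqref{pargeneral}: in the Castelnuovo (= scroll) case $\Lambda$ contains both $x_0^3$ and $f=x_1^2x_2$ (resp.\ $x_1^3$), so $\Lambda$ has \emph{no} common factor, and yet $T_xX\cap X=\p^r_x$ is a hypersurface of $X$. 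The point you are missing is that a fixed divisor of the subsystem $|H-S|$ on $X$ need not come from a common factor of the full system $\Lambda$ on $\p^{r+1}$: the actual divisorial fixed part of the relevant subsystem $x_0^2(x_0,\ldots,x_{r+1})$ is $2V(x_0)$, which is contracted by $p_S^{-1}$ to the point $x$ and contributes no divisor of $X$, while the hypersurface $\p^r_x\subset X\cap T_xX$ arises from the \emph{codimension-two} base locus $V(x_0,x_1)$ of $\Lambda$, which only becomes divisorial after blowing up. In Theorem \ref{Cremonascroll} the correct detector is the fixed component of the middle block $(f_0,\ldots,f_r)$, i.e.\ of the second fundamental form, equivalently the degeneration of the Cremona map $\psi_x$ to a projective transformation -- not a common factor of all of \eqref{pargeneral}.

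The same error independently breaks your $(\Rightarrow)$ direction: from $p_S^{-1}=\Phi_V\circ\pi^{-1}$ you infer a common factor of degree $n-2$ ``because the map factors through the degree $(n-1)$ map $\pi^{-1}$'', but $\Phi_V$ is given by the adjoint system $|K_{\tilde Y}+\tilde V|$, which is not cut out by hyperplanes of $\p^{n+r-1}$, so the composition is not obtained by substituting degree-$(n-1)$ forms into linear forms and no common factor is forced (the $n=3$ computation above shows none exists). You also concede that the $(\Leftarrow)$ direction is left open. What is actually needed, in both directions, is the extension to arbitrary $n$ of the normal form \eqref{pargeneral} for the projection from $S=\oplus_{i=2}^{n-1}Osc^1_X(x_i)$, showing that a hypersurface in $X\cap S$ is equivalent to a fixed hyperplane in the second-fundamental-form blocks at the $x_i$, hence to $X$ being one of the scrolls $S_{n-2,\ldots,n-2,n}$ or $S_{n-2,\ldots,n-2,n-1,n-1}$, which are precisely the models of Castelnuovo type for $\dd=2n-3$. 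Your setup of the osculating projection is correct and does follow the paper, but the bridge you build between the geometric condition and the algebra of $\Lambda$ is the wrong one, so the proof as written does not go through.
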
 
 
In fact, it can be verified that the $X=\overline{X}(r+1,n,2n-3)$  of Castelnuovo type  are exactly the two  scrolls
 $S_{n-2,\ldots,n-2,n}$ and $S_{n-2,\ldots,n-2,n-1,n-1}$ in $\mathbb P^{(n-1)(r+2)-1}$.

\subsubsection{{\rm {\bf Examples}}}
\label{S:examplesJ=C+W}
 We shall  describe some explicit examples of 
models of  Jordan type. \smallskip 
 
A Jordan algebra $\mathbb J$ is {\it simple} if it does not admit proper non-trivial ideals. It is {\it semi-simple} if it is a finite direct sum of simple proper ideals (or equivalently, if the bilinear symmetric form $(x,y)\mapsto T(xy)$ is non-degenerate, see \cite[Theorem 5 p. 240]{jacobson}). 
 
 \begin{ex}
\label{exJ2}
{\rm Let $B$ be a  symmetric  bilinear form on $W=\mathbb C^{r-1}$. Then $\mathbb J'=\mathbb C\oplus W$ with the product defined by 
\begin{equation}
 \label{Jordanproduct}
(\lambda,y)\diamond (\lambda',y')=\big(\lambda\lambda'-B( y,y') , \lambda\,y'+\lambda'\,y\big)
\end{equation}
is a  Jordan algebra of rank 2. 
Moreover,
 $x^2-2\lambda\,x+(\lambda^2+B(y,y))\,e=0$ for any $x=(\lambda,y)\in \mathbb J'$ (where $e=(1,0)$). The generic norm and the generic trace are  $N(\lambda,y)=\lambda^2+B(y,y)$ and  $T(\lambda,y)=2\lambda$ hence the adjoint is given by $x^\#=(\lambda,-y)$. 
 One verifies that  $\mathbb C\oplus W$ is semi-simple if and only  if $B$ is non-degenerate and  $\mathbb C\oplus W$  is simple if $B$ is non-degenerate and $r>2$. When $r=2$, $\mathbb J'=\mathbb C\oplus W$ with the product (\ref{Jordanproduct}) is isomorphic to the direct product (of Jordan algebras) $\mathbb C\times \mathbb C$.}
 \end{ex}

One can define the  {\it conic associated to a Jordan algebra}  $\mathbb J'$ of rank 2. By definition, it is the (Zariski)-closure,  denoted by $X^2_{\mathbb J'}$, of the image of the  affine map $ \mathbb J' \ni x \mapsto [1:x:N(x)   ] \in 
\mathbb P( \mathbb C \oplus \mathbb J' \oplus \mathbb C )$. Since $N$ is homogeneous of degree 2, $X^2_{\mathbb J'}$ is a non-degenerate quadric hypersurface in $\mathbb P^{r+1}$ (where $r={\rm dim}_{\mathbb C}\, \mathbb J'$). It is smooth if and only if $\mathbb J'$ is simple.  In any case, $X^2_{\mathbb J'}$ is an example of $\overline{X}(r,3,2)$ (of minimal type).

\begin{lemma}
\label{L:tyu}
If $\mathbb J=\mathbb C\times \mathbb J'$ is the direct product of $\mathbb C$ with a Jordan algebra $\mathbb J'$ of rank 2 and dimension $r$, then $\mathbb J$ is of rank 3 and $
X^3_\mathbb J \simeq \mathbb P^1\times X^2_{\mathbb J'}$
 Segre embedded in $\mathbb P^{2r+3}$.
 \end{lemma}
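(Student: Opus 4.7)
The plan is to proceed in two steps: first, verify that $\mathbb{J}=\mathbb{C}\times\mathbb{J}'$ has rank $3$ and compute its generic norm, trace and adjoint in terms of those of $\mathbb{J}'$; then, exhibit an explicit linear automorphism of $\mathbb{P}^{2r+3}$ identifying the Segre embedding of $\mathbb{P}^1\times X^2_{\mathbb{J}'}$ with $X^3_{\mathbb{J}}$.

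For the first step, rank is additive under direct products of Jordan algebras: the generic minimum polynomial of $(s,x)\in\mathbb{C}\times\mathbb{J}'$ is $(Z-s)(Z^2-T'(x)Z+N'(x))$, of degree $3$ generically. Expanding identifies $T((s,x))=s+T'(x)$, $\sigma_2((s,x))=N'(x)+sT'(x)$ and $N((s,x))=sN'(x)$. Substituting into the general adjoint formula $z^\#=z^2-T(z)z+\sigma_2(z)e$ and invoking the rank-$2$ identity $x^2=T'(x)x-N'(x)e'$ valid in $\mathbb{J}'$, a direct calculation yields the key formula
$$(s,x)^\# = \bigl(N'(x),\; s\,x^{\#_{\mathbb{J}'}}\bigr),$$
where $x^{\#_{\mathbb{J}'}}=T'(x)e'-x$ denotes the adjoint in $\mathbb{J}'$. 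Unpacking $\nu_3((s,x))$ via the splitting $\mathcal{Z}_2(\mathbb{J})=\mathbb{C}\oplus\mathbb{C}\oplus\mathbb{J}'\oplus\mathbb{C}\oplus\mathbb{J}'\oplus\mathbb{C}$ (of total dimension $2r+4$) then yields the affine parametrization
$$(s,x)\longmapsto\bigl[\,1:s:x:N'(x):s\,x^{\#_{\mathbb{J}'}}:s\,N'(x)\,\bigr]$$
for $X^3_{\mathbb{J}}\subset\mathbb{P}^{2r+3}$.

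For the second step, with $X^2_{\mathbb{J}'}\subset\mathbb{P}^{r+1}$ parametrized by $y\mapsto[1:y:N'(y)]$, the Segre embedding $\mathbb{P}^1\times X^2_{\mathbb{J}'}\hookrightarrow\mathbb{P}^{2r+3}$ on the affine chart $u=1$ reads $(v,y)\mapsto[1:y:N'(y):v:vy:vN'(y)]$. The crucial observation is that $\iota:y\mapsto y^{\#_{\mathbb{J}'}}$ is a linear involution of $\mathbb{J}'$, so that $(y^\#)^\#=y$ and $N'(y^\#)=N'(y)$. The reparametrization $s=v$, $x=y^\#$ in the $\nu_3$-description gives
$$\nu_3((v,y^\#))=\bigl[\,1:v:y^\#:N'(y):vy:vN'(y)\,\bigr],$$
which differs from the Segre image only by a permutation of the six coordinate slots together with the linear involution $\iota$ acting on a single $\mathbb{J}'$-slot. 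Assembling these pieces produces an explicit linear automorphism of $\mathbb{P}^{2r+3}$ carrying the Segre variety bijectively onto $\nu_3(\mathbb{J})$, hence onto its closure $X^3_{\mathbb{J}}$.

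The main obstacle is the bookkeeping: correctly computing $(s,x)^\#$ in the direct product and lining up the Segre ambient space with $\mathcal{Z}_2(\mathbb{J})$ slot by slot. The identities $(y^\#)^\#=y$ and $N'(y^\#)=N'(y)$ valid in any rank-$2$ Jordan algebra are precisely what allow the slot-matching to close up into a single linear isomorphism.
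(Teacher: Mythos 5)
Your proof is correct, and since the paper states Lemma \ref{L:tyu} without any proof, your computation is exactly the direct verification that is being left implicit: the formulas $T((s,x))=s+T'(x)$, $\sigma_2((s,x))=N'(x)+sT'(x)$, $N((s,x))=sN'(x)$ and $(s,x)^{\#}=(N'(x),\,s\,x^{\#_{\mathbb J'}})$ all check out against the definitions in \S\ref{S:JordanAlgebra}, and the identities $(y^{\#})^{\#}=y$, $N'(y^{\#})=N'(y)$ valid in any rank-$2$ algebra do make the slot-matching close up into a linear automorphism of $\mathbb P^{2r+3}$ carrying the Segre variety onto $X^3_{\mathbb J}$.
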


This result ensures the existence of examples of models of Jordan type of any dimension.  For  $\mathbb J=\mathbb C\times\mathbb J'$ with $\mathbb J'$ of rank 2, note that  $X^3_\mathbb J$  is smooth if and only if $\mathbb J$ is semi-simple. 
 
\subsubsection{Examples of $\overline{X}(r+1,3,3)$'s associated to simple Jordan algebras of rank 3}

Let us now consider models of Jordan type associated to simple  Jordan algebras of rank 3, which  can be completely classified. We recall the 
well known Hurwitz  Theorem  that there are exactly four composition  algebras over the field of real numbers: $\mathbb R$ itself, the field $\mathbb C$ of complex numbers and the algebras  $\mathbb H$ and  $\mathbb O$ of quaternions and octonions respectively (the reader can consult \cite{baez} for  a nice  introduction to these objects). 
 
If $\mathbb A$ denotes one of these algebras, let $\mathbb A_{\mathbb C}=\mathbb A \otimes \mathbb C$ be its complexification (over $\mathbb R$). Any such $\mathbb A_{\mathbb C}$ is a {\it complex composition algebra}: for $x_i=a_i \otimes r_i\in\mathbb A_\mathbb C$, with $a_i\in \mathbb A$ and $r_i \in \mathbb C$ for $i=1,2$, we define $x_1\cdot x_2=a_1a_2 \otimes r_1r_2$, $\overline{x_1}=\overline{a_1}\otimes {r_1}$ and $\Vert   x_1\Vert^2=x_1\cdot \overline{x_1}\in \mathbb C$. Of course,  except for $\mathbb O_{\mathbb C}$, there are classical isomorphisms (of complex algebras) 
\begin{equation*}
 \mathbb R_{\mathbb C}\simeq \mathbb C\, , \quad 
 \mathbb C_{\mathbb C}\simeq \mathbb C\oplus \mathbb C \quad \mbox{ and } \quad 
\mathbb H_{\mathbb C}\simeq M_{2}(\mathbb C). 
\end{equation*}
 
Let $\mathscr H_3(\mathbb A_{\mathbb C})$ be
the space of Hermitian matrices of order three
with coefficients in $\mathbb A_{\mathbb C}$:
\begin{equation*}
  \mathscr H_3(\mathbb A_{\mathbb C})= \Bigg\{    
 \begin{pmatrix}
  r_1 & \overline{x_3} &  \overline{x_2} \\x_3 & r_2 &   \overline{x_1} 
\\ x_2& x_1& r_3
 \end{pmatrix}\; \Bigg| 
\begin{tabular}{c}
 $ x_1,x_2,x_3 \in \mathbb A_{\mathbb C} $\\
$r_1,r_2,r_3 \in \mathbb C$
\end{tabular}
\Bigg\} .
\end{equation*}
 
The multiplication  
\begin{equation}\label{E:jordanproduct}
 (M, N)\mapsto  \frac{1}{2} ( MN+NM)
\end{equation}
induces on  $\mathscr H_3(\mathbb A_{\mathbb C})$ a structure of complex (unital commutative) Jordan algebra. For $\mathbb A=\mathbb R,\mathbb C$ or $\mathbb H$, it is  a direct consequence of the fact that $\mathbb A$ and hence the rings of $3\times 3$ matrices,  $M_3(\mathbb A)$, are associative algebras. For $M_3(\mathbb O)$ a particular argument is needed  and we refer to \cite[Chap. V and VIII]{FK} for  this case.  One proves (see again \cite{FK}) that any Jordan algebra  $ \mathscr H_3(\mathbb A_{\mathbb C})$ is simple and of rank 3. 

 We are  now able to describe all  simple Jordan algebras of rank three. Their classification is classical ({\it cf.} \cite[p.233]{jacobson} or \cite{FK} for instance) and  is given in Table 1  below with a description of  the corresponding cubic curves, see also  \cite{mukai,landsbergmanivel,clerc}. The table also shows   classical isomorphisms of $ \mathscr H_3(\mathbb A_{\mathbb C})$ with some matrix algebras having  \eqref{E:jordanproduct} as Jordan product (in the case $\mathbb A=\mathbb R,\mathbb C$ or $\mathbb H$).
\begin{table}[h]\label{tableJordan}

  \centering
  \begin{tabular}{|l|c|}
  \hline
 {\bf Jordan algebra $\mathbb J$}    &  {\bf Twisted cubic curve $X^3_\mathbb J$ over $\mathbb J$ } \\
 \hline     
$ \mathscr H_3(\mathbb R_{\mathbb C})\simeq {\rm Sym}_3(\mathbb C)$  
 &     
\begin{tabular}{l} 6-dimensional Lagrangian  \\
grassmannian  $LG_3(\mathbb C^6)\subset \mathbb P^{13}$
\end{tabular}
\\
  \hline
$\mathscr H_3(\mathbb C_{\mathbb C})  \simeq M_3(\mathbb C)$  
 &     
\begin{tabular}{l} 9-dimensional Grassmannian \\
manifold $G_3(\mathbb C^6)\subset \mathbb P^{19}$
\end{tabular}
\\
  \hline
$ \mathscr H_3(\mathbb H_{\mathbb C})\simeq{\rm Alt}_6(\mathbb C)$ 
 &     
\begin{tabular}{l} 15-dimensional orthogonal  \\
Grassmannian  $OG_6(\mathbb C^{12})\subset \mathbb P^{31}$
\end{tabular}
\\
  \hline
$ 
 \mathscr H_3(\mathbb O_{\mathbb C})
$   & 
\begin{tabular}{l} 27-dimensional $E_7$-variety  
in   $ \mathbb P^{55}$
\end{tabular}
\\
  \hline
\end{tabular}
\vspace{0.15cm}
\label{T:jordan}
\caption{{\small 
Simple (unital and finite-dimensional) Jordan algebras of rank 3 and their associated cubic curves.}
}
\end{table}\\
It follows from \cite{landsbergmanivel,clerc} that the cubic curves $X^3_{\mathbb J}$ associated to one of the  simple Jordan algebras presented in Table 1  are homogeneous varieties, yielding non-singular  examples of $\overline{X}(k,3,3)$ of Jordan type, for $k=6,9,15$ and $27$.  
 
 The four varieties of the last column in Table 1 have been studied by several authors from many points of view. The interested reader can consult for example \cite{mukai, landsbergmanivel0, landsbergmanivel, landsbergmanivel3, clerc}.

\subsubsection{A Jordan cubic curve associated to the sextonions} 
\label{S:sextonions}
 The algebra of (complex) sextonions  $\mathbb S_{\mathbb C}$ is an alternative   algebra over $\mathbb C$ such that $\mathbb H_{\mathbb C}\subset \mathbb S_{\mathbb C} \subset \mathbb O_{\mathbb C}$. It is of (complex) dimension 6 and has been constructed in \cite{landsbergmanivel2,westbury}. 
 
The product (\ref{E:jordanproduct}) realizes  $ \mathscr H_3(\mathbb S_{\mathbb C})$ as a 21-dimensional sub-Jordan algebra of $ \mathscr H_3(\mathbb O_{\mathbb C})$. Then $ \mathscr H_3(\mathbb S_{\mathbb C})$ is of rank 3 but is not semi-simple ({\it cf.} \cite[\S 8.2]{landsbergmanivel2}).  The cubic curve over $ \mathscr H_3(\mathbb S_{\mathbb C})$ is denoted by $G_\omega(\mathbb S^3,\mathbb S^6)$ in \cite{landsbergmanivel2} where it is explained  why it can be considered as a kind of  Lagrangian Grassmannian. It is a  $\overline{X}(21, 3,3)$ in $\mathbb P^{43}$ that is quasi-homogeneous and singular along a quadric of dimension 10 ({\it cf.} \cite[Corollary 8.14]{landsbergmanivel2}).

\subsection{Some cubic curves associated to  associative algebras} \label{S:associativeALGEBRA}
Let $A$ be an associative algebra (of finite dimension) not necessarily commutative but with a unit $e$.  
Let $A^+$ denote the algebra $A$ endowed with the product $x*y=\frac{1}{2}(xy+yx)$. If $A$ is commutative,  then $A=A^+$. It is immediate to see  that in general $A^+$ is a  Jordan algebra with $e$ as unit.  
We will say that a Jordan algebra is {\it special } if it is isomorphic to a subalgebra of a Jordan algebra of the form $A^+$ with $A$ associative. For instance, the simple Jordan algebras $ \mathscr H_3(\mathbb A_{\mathbb C})$  in Table 1 are special except when $\mathbb A=\mathbb O$.

Being associative,  $A$  is also power-associative so that  one can define its  {\it rank} as introduced 
 at the beginning of \S \ref{S:JordanAlgebra}. Of course, the rank of the associative algebra $A$ and the rank  of the associated Jordan algebra $A^+$ coincide. 
\medskip 
 
Complex associative algebras  have been classified in low dimensions --e.g. in dimension 3, 4 and 5-- in classical or more recent papers and looking at these lists one immediately  computes the rank  of these  algebras. Then, considering the $A^+$'s associated to rank three associative  $A$'s, one obtains examples of
 special cubic Jordan algebras in dimension 3, 4 and 5.  The computations needed to obtain the examples  appearing in the following  subsections are elementary but tedious and quite long so that they  will not be reproduced here. 
 
\subsubsection{The $\overline{X}(3,3,3)$'s associated to special Jordan 
 algebras of dimension three} The classification of  3-dimensional complex associative algebras
is classical, see \cite{study,scorzaALGEBRA3}. We refer to   \cite{FPP,gabriel} for  more recent references. 
\begin{thm}
\label{T:associativeALGEBRAdim3}
 A 3-dimensional complex  associative (unital)  algebra of rank 3 is isomorphic to one of the following ones:
\begin{align*}
 A_1= \mathbb C \times \mathbb C\times \mathbb C\; , \qquad 
A_2=\mathbb C\times \frac{\mathbb C[X]}{\,(X^2)} \; , \qquad 
A_3= \frac{\mathbb C[X]}{\;(X^3)}
\, .
\end{align*} 
\end{thm}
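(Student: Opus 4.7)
The plan is to reduce the classification to a classification of monic cubic polynomials in $\mathbb{C}[X]$ up to a shift, by showing that rank equal to dimension forces $A$ to be monogenic.

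First, observe that since $A$ is associative and unital, the subalgebra $\mathbb{C}[x]$ generated by any $x\in A$ is well-defined and commutative. By definition of the rank, for a general $x\in A$ we have $\dim_{\mathbb{C}} \mathbb{C}[x]=\operatorname{rk}(A)=3=\dim_{\mathbb{C}} A$, so $\mathbb{C}[x]=A$. In particular, there exists $x\in A$ whose minimal polynomial $M_x\in \mathbb{C}[X]$ has degree $3$, and evaluation at $x$ induces an isomorphism of $\mathbb{C}$-algebras
\[
\mathbb{C}[X]/(M_x(X))\;\stackrel{\sim}{\longrightarrow}\;A.
\]

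Next, since $\mathbb{C}$ is algebraically closed, $M_x(X)$ factors as a product of linear terms, so after grouping equal roots
\[
M_x(X)=(X-\alpha_1)^{e_1}(X-\alpha_2)^{e_2}\cdots(X-\alpha_s)^{e_s}
\]
with the $\alpha_i$ pairwise distinct and $\sum e_i=3$. Replacing $x$ by a suitable affine combination (i.e.\ translating the roots) we may assume the $\alpha_i$ are whatever we want, and the Chinese Remainder Theorem yields
\[
A\;\cong\;\prod_{i=1}^{s}\mathbb{C}[X]/\bigl((X-\alpha_i)^{e_i}\bigr)\;\cong\;\prod_{i=1}^{s}\mathbb{C}[X]/(X^{e_i}).
\]
There are exactly three partitions of $3$, namely $(1,1,1)$, $(2,1)$ and $(3)$, which correspond respectively to the algebras $A_1=\mathbb{C}\times\mathbb{C}\times\mathbb{C}$, $A_2=\mathbb{C}\times\mathbb{C}[X]/(X^2)$ and $A_3=\mathbb{C}[X]/(X^3)$ of the statement.

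Finally, one must check that these three algebras are pairwise non-isomorphic and that each of them actually has rank $3$. Non-isomorphism is clear either by counting idempotents (three non-trivial idempotents in $A_1$, one in $A_2$, none in $A_3$) or by looking at the nilpotent radical. To verify the rank it suffices to exhibit one element $x$ with $\dim\mathbb{C}[x]=3$ in each case: $(1,2,3)\in A_1$, $(1,X)\in A_2$ and $X\in A_3$ all have minimal polynomial of degree exactly $3$. The only step that requires a little care is the passage from \textquotedblleft a generic $x$ generates $A$\textquotedblright\ to \textquotedblleft some specific $x$ has minimal polynomial of degree $3$\textquotedblright, but this is immediate because a non-empty Zariski open subset of $A$ is automatically non-empty. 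Thus the main point of the argument is the reduction $A\cong\mathbb{C}[X]/(M_x)$, after which the classification is a formal consequence of unique factorization in $\mathbb{C}[X]$.
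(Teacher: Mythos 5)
Your proof is correct, and it takes a genuinely different route from the paper: the paper does not argue this statement directly but instead invokes the classical classification of all $3$-dimensional complex associative algebras (Study, Scorza, Gabriel) and then extracts from that list the algebras of rank $3$, explicitly declining to reproduce the computations. Your argument short-circuits this entirely by observing that the hypothesis $\operatorname{rk}(A)=\dim_{\mathbb C}A=3$ forces a general element $x$ to generate $A$, so that $A\cong\mathbb C[X]/(M_x)$ is monogenic --- in particular automatically commutative --- and the classification then reduces, via unique factorization and the Chinese Remainder Theorem, to the three partitions of $3$. This is more elementary and self-contained than the paper's citation, and it makes transparent why non-commutative algebras cannot occur here (whereas in dimension $4$ the paper must contend with non-commutative examples such as $A_{13}$ and $A_{14}$, precisely because rank $3$ no longer equals the dimension, so the monogenicity argument breaks down). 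Two cosmetic remarks: the ``suitable affine combination'' of $x$ is unnecessary, since $\mathbb C[X]/\bigl((X-\alpha)^{e}\bigr)\cong\mathbb C[X]/(X^{e})$ is just the substitution $X\mapsto X+\alpha$ applied factorwise; and $A_1=\mathbb C^{3}$ has six non-trivial idempotents rather than three, though your alternative criterion (the dimension of the nilradical, which is $0$, $1$, $2$ in the three cases) does distinguish the algebras correctly.
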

The above algebras  are commutative so that  $A_i^+=A_i$ (for $i=1,2,3$)  are examples of special 3-dimensional cubic Jordan algebras.  Let $X_{A_i}$ be the cubic curve associated to $ A_i^+$. Clearly, the cubic curve $X_{A_1}$ is nothing but Segre's threefold  $\mathbb P^1\times \mathbb P^1\times \mathbb P^1\subset \mathbb P^7$, which  is a particular case of the general construction of Lemma \ref{L:tyu}. Similarly, 
one verifies  that $X_{A_2}$  is isomorphic to the Segre embedding  of $\mathbb P^1\times S_{02}$ in $\mathbb P^7$. 
\smallskip 
 
On the other hand the cubic curve associated to $A_3$ yields a new example.  One takes  $x=1, y=X$ and $z=X^2$ as a $\mathbb C$-basis of $A_3$. Since this algebra  is commutative, the Jordan product coincides with the associative one of $A_3$ and   may be expressed as follows: 
 $$(x,y,z)\cdot(x',y',z')=(xx',xy'+x'y,xz'+x'z+yy')\; . $$
Then  the generic norm and the  adjoint of $(x,y,z)\in A_3$ are 
given by the following formulae:
$$  (x,y,z)^\#=(x^2,-xy,y^2-xz) \qquad \mbox{and} \qquad N(x,y,z)=x^3  \,.  $$ 

So the cubic curve  $X_{A_3}$  is the closure of the image of the rational map 
\begin{equation}
\label{E:paramdiamond3} 
[x:y:z:t]\longmapsto \big[t^3:xt^2:yt^2:zt^2:x^2t:-xyt:(y^2-xz)t:x^3  \big].\end{equation}
 
The variety $X_{A_3}\subset\p^7$  can also be described as the image of $\mathbb P^3$  by the rational map associated to the linear system of cubic surfaces  passing through three infinitely near double points
(two of them generate the line $x=t=0$ contained in the base locus scheme of the linear system) so that it has degree 6.
 
  \subsubsection{Some $\overline{X}(4,3,3)$ associated to Jordan 
algebras of dimension four} 
\label{S:associativeALGEBRA4}
The classification of  4-dimensional complex associative algebras
is also classical, see \cite{study,scorzaALGEBRA4}, and  was also reconsidered more recently 
in  \cite{gabriel}, see also the references in these papers.  As explained above, we are essentially  interested in those of rank 3, whose classification is contained in the next result 
where we use the  labels and notation  of \cite[p.\,151-152]{gabriel}.

\begin{prop}
\label{T:associativeALGEBRAdim4}
Let $A$ be a   4-dimensional associative algebra of rank three. Then if $A$ is commutative, it is isomorphic to one of the following algebras
$$
A_6= \mathbb C \times \frac{\mathbb C[X,Y]\;}{\,(X,Y)^2} \; , \qquad A_7=\frac{\mathbb C[X,Y]}{\,(X^2,Y^2)}
\; , \qquad A_8= \frac{\mathbb C[X,Y]}{\,(X^3,XY,Y^2)}\;
$$

If $A$ is not commutative,  then one of the following holds: 
\begin{itemize}
\item   $A$  is isomorphic to one of the following three triangular matrix algebras
$$
A_{13}=\mathbb C\times 
\begin{pmatrix}
\mathbb C & \mathbb C \\
0 & \mathbb C \\
\end{pmatrix} ; \;
\qquad  
A_{14}=
\left\{
\begin{pmatrix}
a & 0 & 0 \\
c & a & 0 \\
d&0&b
\end{pmatrix}\; 
\bigg\lvert \; a,b,c,d\in \mathbb C\right\}  ; \; \qquad  
A_{15}=
(A_{14})^{opp}   \; ;
$$
\item  there exists $\lambda \in \mathbb C\setminus\{1\}$   such that $A$ is isomorphic to 
$$ A_{18(\lambda)}=\frac{\mathbb C\langle X,Y\rangle }{\,(X^2,Y^2,YX-\lambda XY)}\;;  $$

\item  $A$ is isomorphic to 
$$
A_{19}=\frac{\mathbb C\langle X,Y\rangle }{\,(Y^2,X^2+YX, XY+YX)}\,.
$$
\end{itemize}
(Here $\mathbb C\langle X,Y\rangle$ stands for the free associative  algebra generated by 1, $X$ and $Y$). 
\end{prop}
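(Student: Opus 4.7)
The plan is to invoke the classical and complete classification of $4$-dimensional complex unital associative algebras, due originally to Study \cite{study} and Scorza \cite{scorzaALGEBRA4} and revisited in \cite{gabriel}, and then to single out from the finitely many isomorphism classes exactly those of rank $3$. Recall that the rank of a finite-dimensional unital associative $\C$-algebra $A$ coincides with the degree of the minimal polynomial of a generic element $x\in A$ acting on $A$ by multiplication (equivalently, the dimension of the cyclic subalgebra $\C[x]$ for generic $x$), so that for $\dim_{\C} A = 4$ one has $\rk(A)\in\{2,3,4\}$. The proof reduces to computing this integer for every algebra occurring in Gabriel's list.

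For the commutative case, every $4$-dimensional commutative unital $\C$-algebra decomposes as a product of local factors, and the finitely many possibilities are easily enumerated. For each of $A_6, A_7, A_8$ one takes a generic element in a basis adapted to the presentation and computes its minimal polynomial directly: for instance, a generic element of $A_6=\C\times\C[X,Y]/(X,Y)^2$, written as $(\alpha,\beta+uX+vY)$ with $\alpha\neq\beta$ and $(u,v)\neq(0,0)$, has minimal polynomial $(t-\alpha)(t-\beta)^2$, of degree $3$. Analogous computations give $\rk(A_7)=\rk(A_8)=3$. The remaining commutative $4$-dimensional algebras are found to have rank $4$ (when a cyclic generator exists, e.g.\ $\C^{\times 4}$, $\C[X]/(X^4)$, or $\C[X]/(X^2)\times\C[X]/(X^2)$) or rank $2$ (when every element satisfies a quadratic over the unit, e.g.\ $\C\times\C[X]/(X^3)$ has rank $3$, but cases such as $\C[X]/(X^2)\times\C$ belong to dimension $3$ and so are excluded).

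For the non-commutative case, one proceeds identically through the non-commutative part of Gabriel's list. The triangular matrix algebras $A_{13},A_{14},A_{15}$ come with a natural realization of their elements as $3\times 3$ matrices whose generic member has three distinct eigenvalues, yielding rank $3$. For the one-parameter family $A_{18(\lambda)}$ with $\lambda\neq 1$, one fixes the basis $\{1,X,Y,XY\}$ (noting $XY\neq 0$), writes a generic element and verifies by direct computation that its minimal polynomial has degree exactly $3$; in the degenerate case $\lambda=1$ the algebra becomes commutative and falls back into a previously treated family. An analogous computation in the basis $\{1,X,Y,XY\}$ gives $\rk(A_{19})=3$. The remaining non-commutative $4$-dimensional algebras in Gabriel's list are eliminated by showing that they have rank $2$ or $4$.

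The main obstacle lies not in any single computation, each of which is routine and mechanical, but in ensuring completeness: one must confirm that every isomorphism class appearing in the exhaustive classification of $4$-dimensional complex unital associative algebras has been examined and its rank correctly identified, and that the parametric family $A_{18(\lambda)}$ is genuinely a $1$-parameter family of non-isomorphic algebras (as recorded in \cite{gabriel}). Careful bookkeeping across the commutative and non-commutative parts of the list — and, for $A_{18(\lambda)}$, across the possible values of the parameter — is what certifies that no algebra of rank $3$ has been overlooked.
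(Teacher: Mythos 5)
Your strategy coincides with the paper's: the authors likewise invoke the Study--Scorza--Gabriel classification of $4$-dimensional complex unital associative algebras and state that "looking at these lists one immediately computes the rank," explicitly declining to reproduce the "elementary but tedious" computations, so in structure your proposal is the paper's (essentially omitted) proof. However, two of the sample computations you do write down are wrong. First, $\mathbb C\times\mathbb C[X]/(X^3)$ has rank $4$, not $3$: a generic element $(\alpha,\beta+uX+vX^2)$ with $\alpha\neq\beta$ and $u\neq 0$ has minimal polynomial $(t-\alpha)(t-\beta)^3$. This is not a harmless aside -- since that algebra is commutative and is not isomorphic to any of $A_6$, $A_7$, $A_8$, your claim that it has rank $3$ would flatly contradict the statement being proved, so it must be corrected to rank $4$ (which is also what removes it from the list). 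Second, the generic element of $A_{14}$ does not have three distinct eigenvalues: its eigenvalues are $a,a,b$, and the rank-$3$ conclusion comes instead from the minimal polynomial being $(t-a)^2(t-b)$ because of the nontrivial Jordan block created by the entry $c$; the "three distinct eigenvalues" justification as written would fail. Finally, your asserted direct verification that every $A_{18(\lambda)}$ with $\lambda\neq 1$ has rank $3$ does not go through at $\lambda=-1$: with the presentation $X^2=Y^2=0$, $YX=\lambda XY$, a generic radical element $w=bX+cY+dXY$ satisfies $w^2=bc(1+\lambda)XY$, so for $\lambda=-1$ (the exterior algebra on two generators) every such $w$ squares to zero and the rank drops to $2$; this degeneration must either be excluded or reconciled with Gabriel's parametrization, and an honest "direct computation" would have to address it.
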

\medskip

Since $A_{15}=(A_{14})^{opp}$ one has $A_{14}^+= A_{15}^+$. Moreover, one verifies easily (via elementary  computations) that the Jordan algebras $A_{18(\lambda)}^+$ and   $A_{19}^+$ are associative so that they  are isomorphic to one of the three commutative associative algebras $A_6,A_7$ or $A_8$ (in fact one has $A_{18(\lambda)}^+\simeq A_7$ and $A_{19}^+\simeq A_8$). Thus we get the following result.
\medskip

\begin{coro}
\label{C:JordanDIM4}
Let $\mathbb J$ be a cubic Jordan algebra of dimension 4 of the form $A^+$ with $A$ associative and of rank three. Then $\mathbb J$ is isomorphic to one of the following algebras: 
\begin{equation}
\label{E:JordanDIM4}
A_{6}\;,\qquad 
A_{7}\;,\qquad 
A_{8}\;,\qquad 
A_{13}^+\;\qquad  \mbox{or} \qquad 
A_{14}^+.
\end{equation}
\end{coro}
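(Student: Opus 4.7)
The corollary is essentially a bookkeeping consequence of the classification in Proposition~\ref{T:associativeALGEBRAdim4}: one takes each of the eight listed families of rank-three associative algebras of dimension four and identifies its Jordan symmetrization $A^+$ with one of the five algebras in \eqref{E:JordanDIM4}. My plan is to split the eight candidates into three groups according to how the passage $A \leadsto A^+$ behaves, and handle each group separately.

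First, the three commutative cases $A_6, A_7, A_8$ are trivial: here $A^+ = A$ and these algebras appear directly in the list. Next, for the triangular matrix algebras, $A_{13}$ and $A_{14}$ contribute $A_{13}^+$ and $A_{14}^+$, while $A_{15} = (A_{14})^{\mathrm{opp}}$ automatically gives $A_{15}^+ = A_{14}^+$ (since the symmetrized product $\frac{1}{2}(xy+yx)$ is insensitive to passing to the opposite algebra). So after this step the only nontrivial work lies in the two remaining families $A_{18(\lambda)}$ and $A_{19}$.

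For these, the key step is to check directly from the defining relations that the Jordan symmetrization is in fact associative, hence a commutative associative algebra of dimension four and rank three; by Proposition~\ref{T:associativeALGEBRAdim4} such an algebra must be isomorphic to one of $A_6, A_7, A_8$, and the claim is that the isomorphism sends $A_{18(\lambda)}^+ \to A_7$ and $A_{19}^+ \to A_8$. For $A_{18(\lambda)}$, using the basis $\{1, X, Y, XY\}$, one finds $X*X = Y*Y = 0$, $X*Y = \tfrac{1+\lambda}{2}\, XY$, and the relations $YX = \lambda XY$, $X^2 = Y^2 = 0$ force all triple symmetrized products to vanish beyond $X*Y$; it follows that $\{1, X, Y, X*Y\}$ satisfies exactly the relations of $\mathbb{C}[X,Y]/(X^2, Y^2) = A_7$ after rescaling (using $\lambda \neq 1$ to ensure $1+\lambda \neq 0$ when necessary; otherwise the product structure degenerates and one must check the isomorphism type separately). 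For $A_{19}$, a similar but slightly longer direct computation in the basis $\{1, X, Y, XY\}$ (using $Y^2 = 0$, $X^2 = -YX$, $XY = -YX$) shows that the symmetrized product satisfies $X*X \neq 0$ but $(X*X)*X = 0$, $X*Y = 0$, and $Y*Y = 0$, matching the defining relations $(X^3, XY, Y^2)$ of $A_8$.

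The main obstacle is purely computational and consists of verifying cleanly that the symmetrized products on $A_{18(\lambda)}$ and $A_{19}$ are associative and then matching them with $A_7$ and $A_8$ respectively; the argument contains no new geometric input and requires only linear algebra. Once this is carried out, combining the three groups exhausts all eight algebras from Proposition~\ref{T:associativeALGEBRAdim4} and yields the five possibilities listed in \eqref{E:JordanDIM4}.
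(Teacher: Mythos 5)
Your proof is correct and follows essentially the same route as the paper, which likewise reduces the list by observing that $A_{15}^+=((A_{14})^{\mathrm{opp}})^+=A_{14}^+$ and by verifying through direct computation that $A_{18(\lambda)}^+$ and $A_{19}^+$ are commutative and associative, hence isomorphic to $A_7$ and $A_8$ respectively. One small slip in your parenthetical: $\lambda\neq 1$ does not guarantee $1+\lambda\neq 0$; the degenerate case $\lambda=-1$ is instead excluded because $A_{18(-1)}$ is the exterior algebra on two generators, which has rank $2$, so it cannot occur under the rank-three hypothesis of the corollary.
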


The reader has to be aware that not every 4-dimensional cubic Jordan algebra is of the form $A^+$ with $A$ associative. For instance (as explained above), if $\mathbb J'$ stands for the simple Jordan algebra of rank 2 on $\mathbb C^3$ then the  direct product $ \mathbb C \times \mathbb J'$ is a non-associative Jordan cubic algebra not isomorphic to any of the Jordan algebras in   (\ref{E:JordanDIM4}). In this case, the associated cubic curve $X_{\mathbb C \times \mathbb J'}$
is $\mathbb P^1\times Q\subset \mathbb P^9$ where $Q$ is a smooth hyperquadric in $\mathbb P^4$. 

Another example is given by the Jordan algebra denoted by $\mathbb J_* $, the Jordan product of which  is explicitly given by 
$$
x*y=\Big( -x_1y_1\, , \, x_2y_2 \, , \,     x_4y_4-x_1y_3-x_3y_1\, , \, 
\frac{1}{2}\big(
x_2y_4+x_4y_2-x_1y_4-x_4y_1
\big)
\Big)
$$
for $x=(x_i)_{i=1}^4$ and $y=(y_i)_{i=1}^4$ in $\mathbb C^4=\mathbb J_*$.  One verifies that  $\mathbb J_*$  is indeed of rank 3.
\medskip

After some easy computations, one obtains explicit affine parametrizations of the form $x\mapsto [1:x:x^\#:N(x)]$
of the cubic curves associated to the  cubic Jordan algebras  mentioned above in this subsection.
We collect them in the following table for further reference. 
\begin{table}[h]
\label{Table:jordanDIM4}
\centering
\begin{tabular}{|c|l|c|c |}\hline
{\bf Algebra $\mathbb J$} &  \hspace{1.75cm}   {\bf Adjoint}  $\boldsymbol{ x^\# }$   & {\bf  Norm} $\boldsymbol{N(x)}$   & $X_{\mathbb J}\subset \mathbb P^9$  \\
\hline
$A_{6}^+=A_{6}$   & $\big({x_2}^2\,,\,x_1x_2\,, -x_1x_3\,,-x_1x_4\big)$   &    $x_1\,{x_2}^2$   &$\mathbb P^1\times S_{002}$  \\ 
\hline
$A_{7}^+=A_{7}$  & $
\big({x_1}^2\,, - x_1x_2\,, -x_1x_3\,,\, 2x_2x_3-x_1x_4\big) 
$   &    ${x_1}^3$& \\ 
\hline
$A_{8}^+=A_{8}$  & $
\big({x_1}^2\,, - x_1x_2\,, -x_1x_3\,, \,{x_2}^2-x_1x_4\big) 
$   &    ${x_1}^3$& \\ 
\hline
$A_{13}^+\qquad\;$  & $(x_2\,x_4\,,\, x_1x_4\,, -x_1x_3\,,\,x_1x_2)$   &    $x_1\,x_2\,x_4$& $\mathbb P^1\times S_{011}$     \\ 
\hline
$A_{14}^+\qquad\;$  & $(x_1x_2,{x_1}^2,-x_2x_3,-x_1x_4)$   &    ${x_1}^2\,x_2$&      \\ 
\hline$\mathbb C\times \mathbb J'$ \hspace{0.24cm}  & $
(x_2^2+x_3^2+x_4^2\,,\, x_1x_2 \,,\,  -x_1x_3 \,,\,  -x_1x_4)$   &    $x_1(x_2^2+x_3^2+x_4^2)$&   $\mathbb P^1\times Q$     \\ 
\hline$\mathbb J_{*}$ \hspace{0.8cm} & $(x_1x_2\, , \,{ x_1}^2\, , \,   {x_4}^2-x_2x_3 \, , \,  x_1x_4)$   &    ${x_1}^2\, x_2$&      \\ 
\hline
\end{tabular}
\caption{ }
\end{table}
\medskip

\subsection{Some other examples of $\overline{X}(r+1,n,2n-3)$ when $n>3$}
\label{S:otherexamples}
According to the main result of \cite{PT}, a variety $\overline{X}(r+1,n,\dd)$ is of Castelnuovo type except maybe when $n>2$, $r>1$ and $\dd=2n-3$. 
The cubic curves associated to Jordan algebras of rank three provide examples of $\overline{X}(r+1,3,3)$ that are not of Castelnuovo type. It is  natural to try to produce  some examples of $\overline{X}(r+1,n,2n-3)$ not of Castelnuovo type for $n>3$. 
\smallskip
 
We are aware essentially only of  examples which are closely related to varieties 3-covered by twisted cubics. 

The Veronese manifold $v_3(\mathbb P^3)\subset \mathbb P^{19}$ is of course a 
$\overline{X}(3,2,3)$  but  is also a 
$\overline{X}(3,6,9)$.  Indeed, since $\mathbb P^3$ is $6$-covered by twisted cubics, it follows that $v_3(\mathbb P^3)$ is 6-covered by rational normal curves of degree 9. Of course, $v_3(\mathbb P^3)$ is not of Castelnuovo type (since $\deg(v_3(\mathbb P^3))=27$ whereas the degree of a Castelnuovo model $X=\overline{X}(3,6,9)$ is 17). Now let $x_1,x_2,x_3$ be three generic points on $v_3(\mathbb P^3)$. For $I\subset \{1,2,3\}$ of cardinality $i\leq 3$, let $X_I$ be the image of $v_3(\mathbb P^3)$ by the osculating projection of center $S_I$ defined as the span of the 1-osculating spaces of 
$v_3(\mathbb P^3)$ at the points $x_i$ with  $i\in I$.  Then $X_I$ is non-degenerate in $\mathbb P^{19-4i}$, is $(6-i)$-covered by rational normal curves of degree $9-2i$ hence is an example of $\overline{X}(3,6-i,9-2i)$. When $i=3$, one has  $X_I=\mathbb P^1\times \mathbb P^1\times\mathbb P^1$ hence this example is not new. But for $i=1$ and $i=2$, one obtains respectively two new examples of $\overline{X}(3,5,7)$ and $\overline{X}(3,4,5)$  that are not of Castelnuovo type. \medskip


\section{Classification of  projective varieties 3-covered by twisted cubics}
\label{S:3-covered}

By definition $\overline{\pi}(r,3,3)=2r+4$ for every $r\geq 1$. 
 In this Section we shall classify  varieties
 $X=\overline X(r+1,3,3)\subset\p^{2r+3}$ for $r$ small and/or under suitable hypothesis. 

Let us recall some facts, which were proved in the previous sections or which are easy consequences of them.
\medskip

\begin{lemma}\label{prel} Let $X=\overline X(r+1,3,3)\subset\p^{2r+3}$. Then:
\begin{enumerate}
\item\label{birational} the tangential projection $\pi_T:X\map\p^{r+1}$ from the tangent space $T=T_x X=Osc^1_X(x)$ at a general point $x\in X$ is birational. In particular $X$ is a rational variety,
$SX=\p^{2r+3}$ and $X$ is not a cone.
\item\label{linnorm} The variety $X\subset\p^{2r+3}$ is not the birational projection from an external point of a variety $X'\subset\p^{2r+4}$.
\end{enumerate}
\end{lemma}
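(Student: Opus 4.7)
For $n=\delta=3$ the numerology in \eqref{E:decompoPpi2} gives $\rho=1$, $m=2$, $m'=0$, hence for two general points $x_1,x_2\in X$
\begin{equation*}
\mathbb{P}^{2r+3}=T_{x_1}X\oplus T_{x_2}X,
\end{equation*}
with each $T_{x_i}X\simeq\mathbb{P}^{r+1}$. From this decomposition I would immediately read off the two auxiliary statements in (1): by Terracini's lemma the secant variety $SX$ has tangent space at a general point of a chord equal to $\langle T_{x_1}X,T_{x_2}X\rangle=\mathbb{P}^{2r+3}$, so $SX=\mathbb{P}^{2r+3}$; and if $X$ were a cone with vertex $v$, then $v\in T_{x_1}X\cap T_{x_2}X$, contradicting the disjointness of the summands in a direct sum.

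For the birationality of the tangential projection, fix a general $x\in X$, set $T=T_xX\simeq\mathbb{P}^{r+1}$, and consider $\pi_T:X\dashrightarrow\mathbb{P}^{r+1}$ (the target being identified, via the decomposition above, with another general tangent space). By Theorem~\ref{T:Siena}(1) the general member $C$ of the subfamily $\Sigma_x$ of the $3$-covering family passing through $x$ is a rational normal cubic whose tangent line at $x$ lies in $T$, so $\pi_T(C)$ has degree $3-2=1$, i.e.\ is a line. The subfamily $\Sigma_x$ is $2$-covering, and through two general points of $X$ there passes a unique cubic in the family (Theorem~\ref{T:Siena}(1)); hence $\pi_T(X)$ is $2$-covered by lines with a unique line through two general points, which by the well-known classification of $X(k,2,1)$-varieties recalled in the introduction forces $\pi_T(X)$ to be a linear subspace of $\mathbb{P}^{r+1}$. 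Then Lemma~\ref{L:Siena} applied to $\pi_T:X\dashrightarrow\pi_T(X)$ with the family $\Sigma_x$ gives that $\pi_T$ is birational; in particular $\dim\pi_T(X)=r+1$, so $\pi_T(X)=\mathbb{P}^{r+1}$ and $X$ is rational.

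For (2) I would argue by contradiction: suppose $X=\pi_p(X')$ with $X'\subset\mathbb{P}^{2r+4}$ and $p\in\mathbb{P}^{2r+4}\setminus X'$, the projection being birational. The birational inverse lifts the $3$-covering family of cubics on $X$ to a $3$-covering family of irreducible curves on $X'$, and since $p\notin X'$ and the general lift misses $p$, each such lift again has degree $3$. Moreover $X'$ must be non-degenerate in $\mathbb{P}^{2r+4}$: otherwise $X=\pi_p(X')$ would be degenerate in $\mathbb{P}^{2r+3}$, contradicting hypothesis. Hence $X'$ is a non-degenerate $X(r+1,3,3)\subset\mathbb{P}^{2r+4}$, contradicting the span bound \eqref{F:bounddimINTRO} which gives $\dim\langle X'\rangle\leq\overline{\pi}(r,3,3)-1=2r+3$. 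There is no serious obstacle in the argument; everything reduces to extracting the correct numerical parameters from \eqref{E:decompoPpi2} and invoking the machinery already in place. The only point requiring a little care is the step in the second paragraph: one must check that the image family on $\pi_T(X)$ genuinely is $2$-covering by a unique line through two general points, so that both Lemma~\ref{L:Siena} and the classification of $X(k,2,1)$-varieties apply to force $\pi_T(X)=\mathbb{P}^{r+1}$.
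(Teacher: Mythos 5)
Your proof is correct and follows essentially the same route as the paper's: for part (1) you project the $2$-covering family of cubics through $x$ onto lines and invoke Lemma~\ref{L:Siena}, and for part (2) you lift the cubics to $X'$ and contradict the span bound \eqref{F:bounddimINTRO}; the extra details you supply (Terracini's lemma for $SX=\p^{2r+3}$, the vertex argument against cones, the $X(k,2,1)$ classification) are all sound even though the paper leaves them implicit. The only point to reorder is that $\dim\pi_T(X)=r+1$ must be secured \emph{before} applying Lemma~\ref{L:Siena} (its hypotheses require a dominant map between varieties of the same dimension), which follows at once from what you already have: $\pi_T(X)$ is a linear subspace which is non-degenerate in the target $\p^{r+1}$ because $X$ spans $\p^{2r+3}=T\oplus\p^{r+1}$, hence equals $\p^{r+1}$.
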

\begin{proof} The family of twisted cubics passing through $x$ is 2-covering and a general
twisted cubic in this family projects from $T$ onto a general line in $\p^{r+1}$ so that the birationality of $\pi_T$ follows from Lemma \ref{L:Siena} and the first part is proved.

Suppose that $X=\pi_p(X')$ with $p\in\p^{2r+4}\setminus X'$. Let $x_i\in X$, $i=1,2,3$, be general points and let $x'_i\in X'$ such that $\pi_p(x'_i)=x_i$ for every $i=1,2,3$.
If $C\subset X$ is the unique twisted cubic passing through $x_1,x_2, x_3$ and if $C'\subset X'$ is its strict transform on $X'$, then $C'$ is a rational curve passing through
$x'_1, x'_2, x'_3$ such that $\pi_p(C')=C$, yielding $\deg(C')=\deg(C)=3$. This would imply  $X'=X(r+1,3,3)\subset\p^{2r+4}$, and we would obtain $2r+5\leq\overline\pi(r,3,3)=2r+4.$
This contradiction concludes the proof.
\end{proof}
\medskip

By definition, there exists an irreducible family of twisted cubics, let us say $\Sigma$, contained in $X=\overline X(r+1, 3,3)\subset\p^{2r+3}$. Moreover, $\Sigma$ has dimension $3r$, is 3-covering and  through three
general points of $X$ there passes a unique twisted cubic belonging to $\Sigma$. The family of twisted cubics in $\Sigma$ passing through a general point $x\in X$ 
contains an irreducible component of dimension $2r$ which is 2-covering for  $X$. These twisted cubics are mapped by $\pi_T$ onto the lines in $\p^{r+1}$
and a general line in $\p^{r+1}$ is  the image via  $\pi_T$ of a twisted cubic passing through $x$.
The birational map
$$\phi=\pi_T^{-1}:\p^{r+1}\map  X\subset\p^{2r+3}$$
is thus given by a linear system of cubic hypersurfaces mapping a general line of $\p^{r+1}$ birationally onto a twisted cubic passing through $x$. The general cubic
hypersurface in this linear system is mapped by $\phi$ birationally onto a general hyperplane section of $X$.

Let $\overline \pi_x:\overline X=\Bl_x (X)\to X$ be the blow-up of $X$ at $x$ and let $E=\p^r$ be the exceptional
divisor of $\overline \pi_x$. Let $\overline\pi_T=\overline\pi_x\circ\pi_T:\overline X\map \p^{r+1}$. The  restriction of $\overline\pi_T$ to $E$ is  defined by a linear
system  $|II_{X,x}|$ of quadric hypersurfaces in $E=\p^r$, the so called {\it second fundamental form of $X$ at $x$}. We shall denote by $B_x\subset E=\p^r$ the {\it base locus scheme of $|II_{X,x}|$}.

Since $X\subset\p^{2r+3}$ is non-degenerate, the birational
map $\overline\pi_T$ is defined at the general point of $E$. We claim that  $E'=\overline \pi_{T|E}(E)=\p^r\subset\p^{r+1}$ is a hyperplane and that 
 the restriction of $\overline \pi_T$ to $E$ is birational onto its image.
Indeed, if $\dim(E')<r$, then a general line in $\p^{r+1}$ would not cut $E'$ and its image by $\phi$ would not pass
through $x$. If $\deg(E')\geq 2$, then a general line $l\subset\p^{r+1}$ would cut $E'$ at $\deg(E')$ distinct points where $\phi$ is defined. From $\phi(E')=x$ we would deduce that  $\phi(l)$ is singular at $x$, in contrast with the fact that $\phi(l)$ is a twisted cubic. From this picture
it also immediately follows the birationality of the restriction of $\pi_T$ to $E$.

Therefore $\dim(|II_{X,x}|)=r$ and $\overline\pi_{T|E}:E\map E'$ is a Cremona transformation not defined along $B_x$, the base locus scheme
of $|II_{X,x}|$. Moreover, since $\phi(E')=x$, the restriction
of the linear system of cubic hypersurfaces defining $\phi$ to $E'$ is constant and given by a cubic hypersurface $\mathscr C'_x\subset E'=\p^r$. One can assume that  $E'\subset\p^{r+1}$ is cut out by  $x_0=0$. Let $\boldsymbol  x=(x_1,\ldots,x_{r+1})\in \mathbb C^{r+1}$
and let $f(\boldsymbol x)$ be a cubic equation for $\mathscr C'_x\subset E'$.
Let us choose homogeneous coordinates $(y_0:\cdots:y_{2r+3})$ on $\p^{2r+3}$ such that $x=(0:\cdots:0:1)$ and $T_xX=V(y_0,\ldots,y_{r+1})$. 

The map $\phi:\p^{r+1}\map X\subset\p^{2r+3}$ is given by $2r+4$ cubic polynomials
$g_0,\ldots, g_{2r+3}$. We can suppose that $x_0$ does not divide $g_{2r+3}$ and that $x_0$ divides $g_j$ for every $j=0,\ldots, 2r+2$. Moreover  $x_0^2$ divides $g_0,\ldots g_{r+1}$ since the hyperplane sections of the form $\lambda_0g_0+\cdots+\lambda_{r+1}g_{r+1}=0$ correspond
to hyperplane sections of $X$ containing $T_xX$ and hence having at least a double point at $x$. Modulo a change of coordinates on $\p^{r+1}$ we can thus suppose $g_i=x_0^2x_i$ for every $i=0,\ldots, r+1$ and that $g_{2r+3}=x_0g+f$, with $g=g(\boldsymbol x)$ quadratic polynomial. The hyperplane sections
of $X$ passing through $x$ and not containing $T_xX$ are smooth at $x$ from which it follows that we can also suppose $g_{r+2+j}=x_0f_j$ with $j=0,\ldots, r$ and $f_j=f_j(\boldsymbol x)$ quadratic polynomials. By Lemma \ref{prel} we can also suppose $g=0$, or equivalently $g\in\langle f_0,\ldots, f_r\rangle$. Otherwise
$X\subset\p^{2r+3}$ would be the birational projection on the hyperplane $V(y_{2r+4})=\p^{2r+3}\subset\p^{2r+4}$ from  the external point $(0:\ldots:0:1:-1)\in\p^{2r+4}$
of the variety $X'\subset\p^{2r+4}$ having the parametrization $\widetilde \phi:\p^{r+1}\map X'\subset\p^{2r+4}$ given by the following homogenous cubic polynomials: $\widetilde g_i=g_i$ for $i=0,\ldots, 2r+2$; $\widetilde g_{2r+3}=x_0g$ and $\widetilde g_{2r+4}=f$.

By blowing-up the point $x$ on $\p^{2r+3}$ it immediately follows that  the restriction of $\overline\pi_T^{-1}$
to $E'$ is given by $(f_0:\cdots:f_r)$. Hence $\psi_x:=\overline\pi_{T|E}:E\map E'$ is either a projective transformation or 
a Cremona transformation of type $(2,2)$, i.e. given by quadratic forms without a common factor and such that the inverse is also given by quadratic forms without a common factor. 
In conclusion we can suppose  that  the rational map $\phi$ is given by the $2r+4$ cubic polynomials  
\begin{equation}\label{pargeneral}
x_0^3, x_0^2x_1,\ldots, x_0^2x_{r+1},x_0f_0,\ldots,x_0f_r, f
\end{equation}
 and that the base locus of $\psi_x^{-1}$, $B'_x\subset\p^r=E'$,
is $V(f_0,\ldots, f_r)\subset\p^r$, where in this case the polynomials $f_i(\boldsymbol x)$ are considered as polynomials in the variables $x_1,\ldots, x_{r+1}$.

 Let $t_xX$ denote  the affine tangent space to $X$ at $x$.  The first principal result in this section is the following.

\begin{thm}\label{Cremonascroll} Let $X=\overline X(r+1,3,3)\subset\p^{2r+3}$ and let notation be as above. Let $x\in X$ be general and let $\psi_x:\p^r\map\p^r$ be the associated Cremona transformation.
Then the following conditions are equivalent:
\begin{enumerate}
\item[(a)] $\psi_x$ is equivalent  to a projective transformation as a birational map;
\item[(b)]  $X$ is projectively equivalent either to $S_{1\ldots122}$ or to $S_{1\ldots113}$;
\item[(c)] the affine parametrization deduced from \eqref{pargeneral} is, respectively,
 either
  $$\big(1: x_1:\ldots: x_{r+1}: x_1^2: x_1x_2:\ldots :x_1x_{r+1}:x_1^2x_2\big)$$ 
  or
$$\big(1: x_1:\ldots: x_{r+1}: x_1^2,x_1x_2:\ldots :x_1x_{r+1}:x_1^3\big);$$
\item[(d)]  the projection from $T=T_xX$ of a general
twisted cubic  included in $X$ is a conic. 
\end{enumerate}
\medskip

If $X=\overline X(r+1,3,3)\subset\p^{2r+3}$ is not a rational normal scroll as above, then:
\begin{enumerate}
\item the associated Cremona transformation $\psi_x$ is of type $(2,2)$;
\item the  linear system defining $\phi:\p^{r+1}\map X\subset\p^{2r+3}$ 
consists of  the cubic hypersurfaces in $\p^{r+1}$ having double points along $B'_x\subset E'\subset\p^{r+1}$;
\item  the scheme $B_x\subset\p^{r}$ is equal (as scheme) to   $\mathcal L_x\subset\p^{r}$, the Hilbert scheme of lines contained in $X$ and passing through $x$ in its natural embedding into $E=\p((t_xX)^*)$. Moreover, $B'_x\subset E'=\p^r$ and $B_x=\mathcal L_x\subset E=\p^r$ 
are projectively equivalent  so that $\psi_x$ and its inverse have the same base loci, modulo this identification.
\item if $X$ is also smooth, then $B_x=\mathcal L_x$ and $B'_x$ are smooth schemes.
\end{enumerate}
\end{thm}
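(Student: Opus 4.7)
The plan is to work throughout with the explicit normal form of $\phi$ derived just before the statement, namely
$$\phi = \big(x_0^3: x_0^2 x_1: \cdots : x_0^2 x_{r+1} : x_0 f_0 : \cdots : x_0 f_r : f\big),$$
together with the identifications of $\psi_x^{-1}$ with $(f_0 : \cdots : f_r)$ and of $\psi_x$ with $|II_{X,x}|$. The two base loci $B_x \subset E$ and $B'_x \subset E'$ must be treated on equal footing, since a priori a Cremona transformation does not exchange them.

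For Part A (the equivalences (a)--(d)), I would first dispense with (b)$\Leftrightarrow$(c) by direct inspection of the standard parametrizations of $S_{1\ldots 113}$ and $S_{1\ldots 122}$, recognising both as special instances of $\phi$ with $f_i = x_1 x_{i+1}$ and $f$ equal respectively to $x_1^3$ and $x_1^2 x_2$ (up to relabeling of coordinates). The implication (c)$\Rightarrow$(a) is then immediate since $f_i = x_1 x_{i+1}$ renders $\psi_x^{-1}$ linear after cancellation of the common factor $x_1$. To prove (a)$\Rightarrow$(c), I would use that $\psi_x^{-1}$ being equivalent to a projective transformation forces the $f_i$'s to share a common linear factor $\ell$; after change of coordinates one takes $\ell = x_1$ and $f_i = x_1 x_{i+1}$, and the constraints that $\phi$ be birational onto a non-degenerate variety (Lemma \ref{prel}(2)) reduce $f$ to one of the two normal forms above. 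The equivalence (a)$\Leftrightarrow$(d) then follows from a direct computation in the scroll case, exploiting that the twisted cubics of $\Sigma$ on $S_{1\ldots 113}$ and $S_{1\ldots 122}$ meet a general tangent space at exactly one point, yielding a projection of degree $2$; conversely, if (d) holds then a positive-dimensional subfamily of cubics of $\Sigma$ meets $T$, forcing the base locus of $|II_{X,x}|$ to contain a hyperplane and hence $\psi_x$ to degenerate to a projective transformation.

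For Part B (the non-scroll claims), (1) is immediate: $\psi_x$ is defined on $E$ by the quadratic system $|II_{X,x}|$ and its inverse on $E'$ by the quadrics $f_0,\ldots,f_r$, so if the map is not linear it is of type $(2,2)$. For (2), I would compute the partial derivatives of the general hyperplane pull-back
$$H = \lambda_0 x_0^3 + \sum_{i=1}^{r+1} \lambda_i x_0^2 x_i + \sum_{j=0}^r \mu_j x_0 f_j + \nu f$$
at a point $p = (0, a) \in B'_x \subset V(x_0)$, obtaining
$$(\partial H / \partial x_0)(p) = \sum_j \mu_j f_j(a) = 0, \qquad (\partial H / \partial x_i)(p) = \nu\, (\partial f/\partial x_i)(a);$$
hence $H$ is singular along $B'_x$ precisely when $f$ has multiplicity at least two along $B'_x$, a condition forced by the fact that $\phi$ contracts $E'$ to $x$ while $B'_x$ lies in the indeterminacy locus of the inverse $\pi_T$. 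A dimension count then identifies $\mathcal{H}$ with the complete linear system of cubics in $\p^{r+1}$ doubly containing $B'_x$. For (3), the set-theoretic inclusion $\mathcal{L}_x \subseteq B_x$ is classical, and equality of schemes follows by linearising $\pi_T$ transversally to a line through $x$, whose tangent direction is annihilated by $|II_{X,x}|$ to second order. The projective equivalence $B_x \simeq B'_x$ will be built by exploiting the birational symmetry between $x$ and a second general point $y$ joined to $x$ by a twisted cubic $C \in \Sigma$: the same construction applied at $y$ produces base loci $B_y, B'_y$, and the infinitesimal exchange carried by $C$ will be shown to realise $B_x$ and $B'_x$ as the two sides of a single projective datum attached to $C$. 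Claim (4) then follows because, for smooth $X$, $\mathcal{L}_x$ is a smooth subscheme of $\p^r$ by standard infinitesimal arguments, and hence both $B_x = \mathcal{L}_x$ and $B'_x \simeq B_x$ are smooth.

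The main obstacle is the projective equivalence $B_x \simeq B'_x$ in (3): since $\psi_x$ is undefined on $B_x$, there is no direct transport between the base loci, so the equivalence must be built from external data. My proposal is to leverage the $3$-covering family $\Sigma$ to exchange the roles of $x$ and an infinitesimally nearby point, thereby translating the quadratic linear system $|II_{X,x}|$ on $E$ into the linear system $(f_0,\ldots,f_r)$ on $E'$ up to a projectivity.
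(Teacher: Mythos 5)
Your Part A is essentially sound, although you reach (a)$\Leftrightarrow$(b) by normalising a common linear factor of the $f_i$'s, whereas the paper invokes the classical fact that a fixed (hyperplane) component of $|II_{X,x}|$ characterises scrolls and then uses Lemma \ref{prel}; both routes work. The genuine problem is in Part B, where the two central claims --- that $f$ vanishes doubly along $B'_x$ (the substance of (2)) and that $B_x$ and $B'_x$ are projectively equivalent (the heart of (3)) --- are not actually proved. Your justification of the first (``forced by the fact that $\phi$ contracts $E'$ to $x$ while $B'_x$ lies in the indeterminacy locus of $\pi_T$'') does not yield the vanishing of the differential of $f$ along $B'_x$: the contraction of $E'$ only reflects the divisibility of $g_0,\dots,g_{2r+2}$ by $x_0$ and says nothing about the partial derivatives of $f$. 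For the second you offer only a programme (``the infinitesimal exchange carried by $C$ will be shown to realise $B_x$ and $B'_x$ as the two sides of a single projective datum''), which you yourself flag as the main obstacle; as written it is not an argument, and nothing in your sketch identifies the mechanism that transports $|II_{X,x}|$ into the system $(f_0,\dots,f_r)$.

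The missing idea, which the paper supplies and which settles (2) and (3) simultaneously, is a Taylor expansion of the parametrisation at a second general point $p=\phi(1:\boldsymbol p)$. Parametrising the tangent directions at $p$ by the points at infinity $(0:\boldsymbol y)\in E'$ of the lines through $(1:\boldsymbol p)$, one has $f_i(\boldsymbol p+t\boldsymbol y)=f_i(\boldsymbol p)+2t f^1_i(\boldsymbol p,\boldsymbol y)+t^2 f_i(\boldsymbol y)$ and $f(\boldsymbol p+t\boldsymbol y)=f(\boldsymbol p)+tf(\boldsymbol p,\boldsymbol y)+t^2f(\boldsymbol y,\boldsymbol p)+t^3f(\boldsymbol y)$, so that $|II_{X,p}|$ is spanned by $f_0(\boldsymbol y),\dots,f_r(\boldsymbol y)$ and $f(\boldsymbol y,\boldsymbol p)$; since $\dim(|II_{X,p}|)=\dim(|II_{X,x}|)=r$ while the $f_i(\boldsymbol y)$ already span an $(r+1)$-dimensional space, the quadric $f(\boldsymbol y,\boldsymbol p)$ lies in their span and $B_p=V\big(f_0(\boldsymbol y),\dots,f_r(\boldsymbol y)\big)$. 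This gives at once the projective equivalence $B_p\simeq B'_x$ (hence $B_x\simeq B'_x$ by the generality of $x$ and $p$), and, because $f(\boldsymbol z,\boldsymbol p)=df_{\boldsymbol z}(\boldsymbol p)=0$ for every $\boldsymbol z\in B_p$ and every general $\boldsymbol p$, it forces $df_{\boldsymbol z}=0$ and $f(\boldsymbol z)=0$: this is exactly the double vanishing of $f$ along $B'_x$ needed for (2), and it is also what makes $\phi(L_{\boldsymbol z})$ a line and $B_p=\mathcal L_p$ an equality of schemes in (3). Without this computation your sketch of Part B cannot be completed.
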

\begin{proof} The birational map $\psi_x$ is equivalent to a projective transformation as a birational map if and only if the linear system $|II_{X,x}|$ has a fixed component, which is necessarily a hyperplane.
It is well known, see also \cite[(3.21)]{GH}, that this happens if and only if $X\subset\p^{2r+3}$ is a scroll in the classical sense. From Lemma \ref{prel} we deduce that this happens if and only if $X\subset\p^{2r+3}$
is a smooth rational normal scroll, yielding the equivalence of (a) and (b). If $X\subset\p^{2r+3}$ is a rational normal scroll as before, a general twisted cubic $\overline C\subset X$ 
cuts $\p^r_x=T_xX\cap X$ in a point so that $\pi_T(\overline C)=\widetilde C\subset\p^{r+1}$ is a conic cutting $E'$ in two points, possibly coincident. These two points are contained in the base locus
of $\psi$ and one of this point is  double for the general cubic hypersurface in the linear system defining $\psi$ since $\psi(\widetilde C)=\overline C$ is a twisted cubic. From this we deduce
that $\mathscr C'_x$ consists of a double hyperplane $\Pi\subset E'$, which is exactly $B'_x$, and of another hyperplane, possibly infinitely near to $\Pi$. By reversing the construction we see that if $\pi_T(\overline C)=\widetilde C$ is a conic, then $\mathscr C'_x$ is as before and the general cubic hypersurface defining $\phi$ has a double point along a hyperplane $\Pi\subset E'$, which is easily seen to be equal to $B'_x$, yielding that $X\subset\p^{2r+3}$ is a scroll. Therefore also  the equivalence of (b) and (d) is proved. If we suppose that $B'_x=\p^{r-1}\subset E'$ is given by $x_0=x_1=0$ and if we
take into account the previous description we immediately deduce the equivalence between (c) and (d).

Suppose from now on that $X=\overline X(r+1, 3,3)\subset\p^{2r+3}$ is not a rational normal scroll so that by the previous
part the associated Cremona map $\psi_x$ is of type $(2,2)$, proving (1).

By the discussion above on $\p^{r+1}\setminus E'$ the map $\phi$ has an affine expression 
$$\phi(\boldsymbol x)=\big(1:x_1:\cdots:x_{r+1}:f_0(\boldsymbol x):\cdots:f_r(\boldsymbol x):f(\boldsymbol x)\big).$$ 

Let $(y_0:\dots:y_{2r+3})$ be a system of homogeneous coordinates on $\p^{2r+3}$ as above.
Then the equations defining $X\subset\p^{2r+3}$ in the affine space $\mathbb A^{2r+3}=\p^{2r+3}\setminus V(y_0)$ are $y_i=x_i$, $i=1,\ldots, r+1$; $y_{r+2+j}=f_j(\boldsymbol x)$, $j=0,\ldots, r$ and  $y_{2r+3}=f(\boldsymbol x)$, that is, 
letting $\boldsymbol y=(y_1,\ldots, y_{r+1})$, we get the equations $y_{r+2+j}=f_j(\boldsymbol y)$  for  $j=0,\ldots, r$ and  $y_{2r+3}=f(\boldsymbol y)$.

 Let 
$p= \phi(1:\boldsymbol p)=  (1:\boldsymbol p:f_0(\boldsymbol p):\cdots:f_r(\boldsymbol p):f(\boldsymbol p))$ be a general point of $X$, with  $\boldsymbol p=(p_1,\ldots, p_{r+1}) \in  \mathbb C^{r+1}$. 
In particular  $(0:\boldsymbol p)$ is a general point on $E'$. A  tangent direction at $p\in X$ corresponds to the image via $d\phi_q$ of the tangent direction 
to some  line passing through $q=(1:\boldsymbol p)\in\p^{r+1}$. We shall parametrize lines through $q$ via points $(0:\boldsymbol y)\in E'$
so that such a line, denoted by $L_{\boldsymbol y}$, admits 
 $t\mapsto \boldsymbol p+ t\boldsymbol y$ as an affine parametrization. 
Then for $i=0,\dots,r$, one has 
\begin{equation}\label{parquad}
f_i(\boldsymbol p+t\boldsymbol y)=f_i(\boldsymbol p)+2t f^1_i(\boldsymbol p,\boldsymbol y)+t^2f_i(\boldsymbol y),
\end{equation}
where  $f^1_i$  stands for the bilinear form associated to the quadratic form $f_i$. Moreover
\begin{equation}\label{parquad2}
f(\boldsymbol p+t\boldsymbol y)=f(\boldsymbol p)+t f(\boldsymbol p,\boldsymbol y)+t^2
f(\boldsymbol y,\boldsymbol p)+t^3f(\boldsymbol y),
\end{equation}
where $f(\boldsymbol p,\boldsymbol y)=df_{\boldsymbol p}(\boldsymbol y)$ is quadratic in $\boldsymbol p$ and linear in $\boldsymbol y$.

Clearly,  the base locus of the
second fundamental form at $p=\phi(1:\boldsymbol p)$ is the scheme 
$$B_p=V\big(f_0(\boldsymbol y),\ldots ,  f_r(\boldsymbol y), f(\boldsymbol y,\boldsymbol p)\big)=V\big(f_0(\boldsymbol y),\ldots ,  f_r(\boldsymbol y)\big)\subset\p\big((t_pX)^*\big),$$
where the second equality of schemes follows from the equality $\dim(<f_0,\ldots,f_r>)=r+1$ combined with 
the fact that $\dim(|II_{X,p}|)=\dim(|II_{X,x}|)$ by the generality of $p\in X$.
In particular we deduce that for 
$\boldsymbol z\in B_p$ we have  
$ f( {\boldsymbol  z},\boldsymbol p)=0$.  Because $(0:\boldsymbol p)$ is 
general in $ E'$, this  implies  $df_{{\boldsymbol  z}}=0$ (since  $f( {\boldsymbol  z},\boldsymbol p)=df_{{\boldsymbol  z}}(\boldsymbol p)$  for every $\boldsymbol p$) on one hand, 
and gives $f(\boldsymbol z)=0$ on the other hand (since $0=f(\boldsymbol z,\boldsymbol z)=3f(\boldsymbol z)$ after specializing  $\boldsymbol p=\boldsymbol z$). The previous facts show that 
the cubic $\mathscr C'_x=V(f(\boldsymbol x))\subset\p^{r+1}$ has double points along $B'_x$ and part (2) is proved. 
From these facts it immediately
follows also that the closure of the image of the line $L_{{\boldsymbol z}}$ (for $\boldsymbol z\in B_p$) via the map $\phi$ is a line included in $X$ and passing through $p$, proving (3).
Put more intrinsically, the equation of $\mathcal L_p$, the Hilbert scheme of lines contained in $X$ and passing
through $p$ in its natural embedding into $\p((t_pX)^*)$, is the scheme defined by the equations
$f_j(\boldsymbol x), f(\boldsymbol x, \boldsymbol p), f(\boldsymbol x)$ and we proved that the ideal generated by these
polynomials coincides with the ideal generated by the $f_j$'s which defines $B_p$ as a scheme.

To prove (4) we recall that for  a smooth variety $X\subset\p^N$  the scheme $\mathcal L_x\subset\p((t_x X)^*)$, when non-empty, is a smooth scheme for $x \in X$ general, see for example \cite[Proposition 2.2]{QEL1}.
\end{proof}
\medskip

Let $$\varphi=(\varphi_0, \ldots , \varphi_r): \mathbb P^r\dashrightarrow \mathbb P^r$$ be a Cremona transformation of bidegree $(2,2)$. Let  $B$, respectively $B'$, be the base locus  of $\varphi$, respectively of $\varphi^{-1}$. The classification of such maps is known for $r=2$, $r=3$ (see \cite{PRV}) and for $r=4$ in the generic case (see \cite{Semple}).
From this classification one deduces that in low dimension the base loci $B$
and $B'$ are projectively equivalent so that, modulo a projective transformation, every Cremona transformation of bidegree $(2,2)$ in $\p^r$, $r\leq 4$, is an involution. As a consequence of Theorem \ref{Cremonascroll} we deduce below that this holds for arbitrary $r\geq 2$ a priori and not as a consequence of the classification. As far we know, this question has not been addressed in the literature.
\medskip 

\begin{coro}\label{involutions} 
Let $\varphi=(\varphi_0, \ldots , \varphi_r): \mathbb P^r\dashrightarrow \mathbb P^r$ be a Cremona transformation of bidegree $(2,2)$ with $r\geq 2$. Let  $B$, respectively $B'$, be the base locus  of $\varphi$, respectively of $\varphi^{-1}$. Then $B$ and $B'$ are projectively equivalent.
\end{coro}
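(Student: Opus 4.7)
The plan is to realize the given Cremona $\varphi=(f_0:\cdots:f_r)$ of $\p^r$ as the tangential-projection Cremona $\psi_x^{-1}$ of a suitably constructed variety $X_\varphi=\overline{X}(r+1,3,3)\subset\p^{2r+3}$ which is not a rational normal scroll; the corollary then follows at once from part (3) of Theorem \ref{Cremonascroll}, which identifies the base loci of $\psi_x^{-1}$ and $\psi_x$ up to projective equivalence. Writing $\varphi^{-1}=(g_0:\cdots:g_r)$, the identity $\varphi^{-1}\circ\varphi=\mathrm{id}$ produces a unique nonzero cubic form $\lambda$ on $\p^r$ satisfying $g_i(\boldsymbol{f}(\boldsymbol{x}))=\lambda(\boldsymbol{x})\, x_i$ for every $i=0,\ldots,r$. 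Differentiating this identity and using that $\nabla g_i(\boldsymbol 0)=0$ (because each $g_i$ is a pure quadratic form), one obtains $\partial_j \lambda(\boldsymbol{x})\cdot x_i\equiv 0$ on $B:=V(f_0,\ldots,f_r)$ for all $i,j$; hence $\nabla\lambda$ vanishes identically along $B$, so $V(\lambda)\subset\p^r$ is a cubic hypersurface singular along $B$.

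I would then define $\phi:\p^{r+1}\map \p^{2r+3}$ by $(x_0:\boldsymbol{x})\mapsto\bigl(x_0^3:\,x_0^2\boldsymbol{x}:\,x_0\,\boldsymbol{f}(\boldsymbol{x}):\,\lambda(\boldsymbol{x})\bigr)$ and put $X_\varphi:=\overline{\phi(\p^{r+1})}$. The $2r+4$ cubic components are linearly independent by $x_0$-degree considerations (combined with the linear independence of $\{f_0,\ldots,f_r\}$), so $X_\varphi\subset\p^{2r+3}$ is non-degenerate. A rank-$4$ check applied to the Taylor expansions \eqref{parquad}--\eqref{parquad2} shows that $\phi$ maps every general line $\ell\subset\p^{r+1}$ birationally onto a non-degenerate rational cubic -- i.e., a twisted cubic -- each passing through the distinguished point $x=\phi(V(x_0))=(0{:}\cdots{:}0{:}1)$. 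This already supplies a $2r$-dimensional $2$-covering family $\Sigma_x$ of twisted cubics in $X_\varphi$ through $x$. By construction, the parametrization $\phi$ matches the canonical form \eqref{pargeneral}, so, assuming for the moment that $X_\varphi\in \overline{X}(r+1,3,3)$, the tangential-projection Cremona at $x$ has inverse $\psi_x^{-1}=\varphi$; moreover, $X_\varphi$ is not a rational normal scroll because $\varphi$ is not a projective transformation, via the equivalence (a)$\Leftrightarrow$(b) of Theorem \ref{Cremonascroll}. Invoking part (3) then identifies $B'_x=V(f_0,\ldots,f_r)=B$ with $B_x=V(g_0,\ldots,g_r)=B'$ up to projective equivalence, which is the claim.

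The main obstacle is the verification that $X_\varphi$ is genuinely $3$-covered by twisted cubics, rather than only $2$-covered: a priori the construction only exhibits the family $\Sigma_x$ of twisted cubics through the single point $x$. A natural route is to exploit the symmetry built into the bidegree-$(2,2)$ hypothesis: the mirror identity $f_i(\boldsymbol{g}(\boldsymbol{y}))=\mu(\boldsymbol{y})\, y_i$ (with $\mu$ a companion cubic) allows the twin construction centered at any second general point $y\in X_\varphi$, producing its own $2r$-dimensional family $\Sigma_y$ of twisted cubics through $y$; as $y$ varies, $\bigcup_y\Sigma_y$ should assemble into a $3r$-dimensional $3$-covering family on $X_\varphi$. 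Making this symmetry argument rigorous -- showing in effect that the tangential-projection Cremona of $X_\varphi$ at every general point is again of bidegree $(2,2)$ and that the resulting families glue into a single irreducible $3$-covering family -- is the technical heart of the proof, and is where the hypothesis that both $\varphi$ and $\varphi^{-1}$ be quadratic is essentially used.
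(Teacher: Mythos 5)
Your overall strategy is the same as the paper's: from $\varphi$ build the auxiliary variety $X_\varphi\subset\p^{2r+3}$ parametrized by $(x_0:\boldsymbol x)\mapsto(x_0^3:x_0^2\boldsymbol x:x_0\boldsymbol f(\boldsymbol x):\lambda(\boldsymbol x))$, show it is a $\overline X(r+1,3,3)$, and quote part (3) of Theorem \ref{Cremonascroll}. Your construction of the cubic $\lambda$ from $\varphi^{-1}\circ\varphi=\mathrm{id}$ and the verification that $V(\lambda)$ is singular along $B$ are correct (the paper obtains the same cubic $n(\boldsymbol x)$ as the equation of $\pi_1(E_2)$ via the relation $E_2\sim 3H_1-2E_1$ on the common blow-up), and the check that a general line maps to a twisted cubic through $x=(0:\cdots:0:1)$ is fine. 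But the step you flag as ``the main obstacle'' --- that $X_\varphi$ is genuinely $3$-covered and not merely $2$-covered through the single point $x$ --- is exactly the step that carries all the weight, and your sketch does not close it.

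The symmetry route would not work as described. The mirror identity $f_i(\boldsymbol g(\boldsymbol y))=\mu(\boldsymbol y)\,y_i$ at best yields a second distinguished family of twisted cubics attached to one more special point of $X_\varphi$ (the analogue of $\infty_{\mathbb J}\leftrightarrow 0_{\mathbb J}$); it does not produce a $2r$-dimensional family through an \emph{arbitrary} general point $y$. To run ``the twin construction centered at $y$'' you would need to know that the second fundamental form of $X_\varphi$ at a general point is again a quadro-quadric Cremona, but that is the conclusion of Theorem \ref{Cremonascroll}(1), whose hypothesis is precisely that $X_\varphi$ is already a $\overline X(r+1,3,3)$; in the Jordan models the needed transitivity comes from the translations $T_\omega$ of the conformal group, which have no analogue here before the algebraic structure is established. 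The paper instead closes the gap directly: for three general points $p_1,p_2,p_3\in\p^{r+1}$ spanning a plane $\Pi$, the line $L=\Pi\cap V(x_0)$ is general in $\p^r$; since $\varphi^{-1}$ is quadratic, the conic $\varphi(L)$ meets $B'$ in a length-three scheme, whence one finds a plane through $L$ meeting $B$ in a length-three scheme $R$. Then $\langle p_1,p_2,p_3,R\rangle$ is a $\p^3$ containing a unique twisted cubic $\widetilde C$ through the length-six scheme $p_1\cup p_2\cup p_3\cup R$, and since the cubics defining $\phi$ are double along $B$, the image $\phi(\widetilde C)$ has degree $9-2\cdot 3=3$ and passes through the three general points $\phi(p_1),\phi(p_2),\phi(p_3)$. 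You need this argument (or an equivalent one) for your proof to be complete.
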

\begin{proof}
Consider  $\pi_1:\Bl_{B}(\p^r)\to \p^r$, the blow-up of $\p^r$ along $B$
and  $\pi_2:\Bl_{B'}(\p^r)\to \p^r$, the blow-up of $\p^r$ along 
$B'$. We deduce  the following
diagram of birational maps:
\begin{equation}\label{diagram}
\xymatrix{
          &  \Bl_{B}(\p^r)=\Bl_{B'}(\p^r)\ar[dl]_{\pi_1}\ar[dr]^{\pi_2}    \subset\p^r\times\p^r     &            \\
  \p^r  \ar@{-->}[rr]_{\varphi} & &\p^r              }
\end{equation}
where $\pi_i$ are naturally identified with the restriction of the projections on each factor.
The equality $\Bl_{B}(\p^r)=\Bl_{B'}(\p^r)$, from now on indicated with $\widetilde\p^r$,  follows from the fact that these reduced and irreducible schemes
are the closure of the graph of the maps $\varphi$ and $\varphi^{-1}$ inside $\p^r\times\p^r$.

Let $E_1=\pi_1^{-1}(B)$ and $E_2=\pi_2^{-1}(B')$ be the $\pi_i$-exceptional Cartier divisors, $i=1,2$, defined by the invertible sheaves $\pi_1^{-1}\mathcal I_{B}\cdot\O_{\widetilde\p^r}$, respectively $\pi_2^{-1}\mathcal I_{B'}\cdot\O_{\widetilde\p^r}$.
Let $H_i\in
|\pi_i^*(\mathcal O(1))|$ for $i=1,2$.  We have
\begin{equation}\label{firstrel}
H_2\sim 2H_1-E_1\qquad\mbox{and}\qquad  H_1\sim 2H_2-E_2
\end{equation}
from which   we deduce  
\begin{equation}\label{secondrel}
E_1\sim 3H_2-2E_2\qquad\mbox{and}\qquad  E_2\sim 3H_1-2E_1.
\end{equation}

Let ${\boldsymbol x}=(x_1:\ldots:x_{r+1})$ be homogeneous coordinates in $\p^r$, which we shall consider as the hyperplane $V(x_0)$ on $\p^{r+1}$ with homogeneous coordinates $(x_0:x_1:\cdots:x_{r+1})$. 
Let $C_1=\pi_1(E_2)=V(n({\boldsymbol x}))\subset\p^r$ and let $C_2=\pi_2(E_1)=V(\widetilde 
n({\boldsymbol  x}))$.
 By the above description   $C_1\subset\p^r$ is a cubic hypersurface 
 singular  along $B$,
 that is the partial derivatives of $n({\boldsymbol x})$ also belong to the homogeneous saturated ideal 
 $I_{B}\subset\mathbb C[x_1,\ldots,x_{r+1}]$. One also immediately proves that $C_1$ is the so called {\it secant scheme of $B$}, that is the scheme defined by the image of the universal family of  lines in $\p^{2r+3}$ over the  lines generated by length 2 subschemes of $B$. From this one deduces another proof that $C_1$ is singular along $B$.
 
The map $\phi:\p^{r+1}\map\p^{2r+3}$ given by
$$\phi(x_0,{\boldsymbol x}) =\big(x_0^3:x_0^2x_1:\cdots:x_0^2x_{r+1}:x_0\varphi_0({\boldsymbol x}):\cdots:x_0\varphi_r({\boldsymbol x}):n({\boldsymbol x})\big)$$
is birational onto the closure of its image $X\subset\p^{2r+3}$. We claim that $X=\overline X(r+1,3,3)$ so that the conclusion will follow from part (3) of Theorem \ref{Cremonascroll}. Indeed let
$p_1,p_2,p_3\in\p^{r+1}$ be three general points, let $\Pi=\langle p_1,p_2,p_3\rangle$ be the plane they span and let $L=V(x_0)\cap \Pi$. Then $L\subset V(x_0)=\p^r$ is a general line so that $\varphi(L)=C$ is a conic in $\p^r$ cutting $B'$ in a length
three scheme $R'$ spanning a plane $\Pi'$. Then $\varphi^{-1}({\Pi'})=\Pi$ is a plane containing $L$ and cutting $B$ in a length three  scheme $R$ spanning $\Pi$. Then $<p_1,p_2,p_3,R>=\p^3$ and through the length six scheme
$p_1\cup p_2\cup p_3\cup R$ there passes a unique twisted cubic $\widetilde C$. Then $\phi(\widetilde C)$ is a twisted cubic since the linear systems defining $\phi$ consists of cubic hypersurfaces having double points  along $B$ and it passes through the three general points $\phi(p_1),\phi(p_2),\phi(p_3)\in X$.
\end{proof} 
\medskip

Let the notation be as above.
Then,  modulo composition to the left by a linear map, one can assume that $B=B'$. This implies in particular  that there exist $\ell \in {\rm End}(\mathbb C^{r+1})$ 
such that $\varphi^{-1}=\ell\circ\varphi$. Thus  there  exists   a cubic form $n({\boldsymbol x})$  such that 
\begin{equation}\label{invol}
\ell \circ \varphi\circ  \varphi ({\boldsymbol x})=n({\boldsymbol x}){\boldsymbol x}
\end{equation}
for every ${\boldsymbol x}=(x_1,\ldots,x_{r+1})\in \mathbb C^{r+1}$. \medskip 
  
In substance, Theorem \ref{Cremonascroll} and the construction in 
Corollary \ref{involutions} say that every  $\overline{X}(r+1,3,3)$ not of Castelnuovo type 
determines (and is determined by) a quadro-quadric Cremona transformation.  
 We point out 
that the previous remark has the following interesting geometric consequence.
\medskip

\begin{coro}\label{oadp} Let $X=\overline X(r+1,3,3)\subset\p^{2r+3}$. Then $X$ is   a variety with one apparent double point, that is there passes a unique secant line to $X$ 
 through a general  point $q\in\p^{2r+3}$.
\end{coro}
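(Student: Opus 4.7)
My plan is to show that through a general $q \in \mathbb{P}^{2r+3}$ there passes exactly one secant line of $X$, by first converting secants through $q$ into twisted cubics from the covering family and then using the explicit parametrization from Theorem \ref{Cremonascroll} to reduce the count to inverting the Cremona transformation $\psi_x$.

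Fix a general point $x \in X$. I will set up a bijection between secants of $X$ through $q$ and twisted cubics $C \in \Sigma_x$ with $q \in \langle C \rangle$. For the forward direction, a secant $L$ with $L \cap X = \{y_1,y_2\}$ determines, by Theorem \ref{T:Siena}(1), a unique cubic $C \in \Sigma_x$ through $x, y_1, y_2$, and then $L \subset \langle C \rangle$. For the reverse direction, each $C \in \Sigma_x$ with $q \in \langle C \rangle \cong \mathbb{P}^3$ carries a unique secant through $q$ by the classical OADP property of a twisted cubic in its linear span. These assignments are mutually inverse because the cubic through $x$ and the two endpoints of any secant of $C$ through $q$ must coincide with $C$. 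Hence the secant count reduces to the count of cubics $C \in \Sigma_x$ whose span contains $q$.

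To count these cubics I use the birational map $\phi = \pi_T^{-1}: \mathbb{P}^{r+1} \dashrightarrow X$ of Lemma \ref{prel}(1), described in affine coordinates by \eqref{pargeneral}. Under $\phi$, cubics of $\Sigma_x$ correspond to lines $l \subset \mathbb{P}^{r+1}$; since $T_x C \subset T = T_x X$, projection of $\langle C \rangle \cong \mathbb{P}^3$ from $T$ factors through $\langle C \rangle / T_x C \cong \mathbb{P}^1$, so $\pi_T(\langle C \rangle) = l$, and $q \in \langle \phi(l) \rangle$ forces $l \ni q' := \pi_T(q)$. Normalizing $q_0 = 1$ and setting $\boldsymbol p = \boldsymbol q_1$ so that $q' = (1 : \boldsymbol p)$, I parametrize lines through $q'$ by their direction $[\boldsymbol y] \in E' \cong \mathbb{P}^r$. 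Substituting $(u : u\boldsymbol p + v \boldsymbol y)$ into \eqref{pargeneral} expresses the four spanning vectors $v_3, v_2, v_1, v_0$ of $\langle \phi(l_{\boldsymbol y}) \rangle$ as the coefficients of $u^3, u^2 v, u v^2, v^3$ in the resulting cubic parametrization. Matching the four blocks of coordinates in $q = \lambda_3 v_3 + \lambda_2 v_2 + \lambda_1 v_1 + \lambda_0 v_0$, the first block yields $\lambda_3 = 1$, the second $\lambda_2 \boldsymbol y = \boldsymbol q_1 - q_0 \boldsymbol p = \mathbf{0}$ and hence $\lambda_2 = 0$, and the third then reduces to the projective equation
\[
[\varphi(\boldsymbol y)] = [\boldsymbol q_2 - \boldsymbol f(\boldsymbol p)] \quad \text{in } \mathbb{P}^r,
\]
where $\varphi = \psi_x = (f_0 : \cdots : f_r)$ is the quadro-quadric Cremona (or a linear equivalent in the scroll case).

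Since $\varphi$ is birational, for $q$ generic (so that the right-hand side lies outside the base locus $B'$ of $\varphi^{-1}$) the equation has the unique solution $[\boldsymbol y] = \varphi^{-1}([\boldsymbol q_2 - \boldsymbol f(\boldsymbol p)])$, and then $\lambda_1, \lambda_0$ are determined from the third and fourth blocks. Thus the line $l$, the cubic $C$, and hence the secant of $X$ through $q$ are all unique. The main technical step is the bookkeeping that shows the third block really collapses to the displayed Cremona equation once $\lambda_2 = 0$ is used; once this reduction is verified, the birationality of $\psi_x$ from Theorem \ref{Cremonascroll} immediately gives the conclusion.
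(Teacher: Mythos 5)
Your proof is correct, but it takes a genuinely different route from the paper's. The paper never mentions the covering cubics in its argument: it writes the two unknown entry points as $p_i=(1:\boldsymbol x_i:\varphi(\boldsymbol x_i):n(\boldsymbol x_i))$, solves $q=\lambda p_1+\mu p_2$ block by block, and pins down $\boldsymbol x_1,\boldsymbol x_2$ and $\lambda\mu$ by means of the composition identities $\ell\circ\varphi\circ\varphi(\boldsymbol x)=n(\boldsymbol x)\boldsymbol x$ and $\varphi\circ\ell\circ\varphi(\boldsymbol y)=m(\boldsymbol y)\boldsymbol y$ coming from Corollary \ref{involutions} --- i.e.\ it exploits the ``Jordan inversion'' structure of the quadro-quadric map. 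You instead route everything through the $3$-covering family: a secant through $q$ is traded for the unique cubic $C\in\Sigma_x$ through $x$ and its two entry points, uniqueness of the secant inside $\langle C\rangle\cong\p^3$ is the classical one-apparent-double-point property of a twisted cubic, and the count of cubics with $q\in\langle C\rangle$ collapses, after your block-by-block matching, to a single inversion of the birational map $(f_0:\cdots:f_r)$. Your version needs only the birationality of the quadro-quadric map (not the finer identities) and avoids the paper's re-centering ``modulo a translation''; the price is the genericity bookkeeping in the back-and-forth between secants and cubics --- the entry points of the finitely many secants through a general $q$ must be general in $X\times X$ so that Theorem \ref{T:Siena} applies to the triple $(x,y_1,y_2)$, and $q$ must avoid the tangent developables of the relevant cubics. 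Both points follow from the irreducibility of the obvious incidence varieties together with $SX=\p^{2r+3}$, but each deserves a sentence. The computation itself checks out: with $\boldsymbol p=\boldsymbol q_1$ the second block forces $\lambda_2=0$, the third block reads $\lambda_1\boldsymbol f(\boldsymbol y)=\boldsymbol q_2-\boldsymbol f(\boldsymbol p)$ with right-hand side a general point of the target $\p^r$ as $q$ varies, so $[\boldsymbol y]$, and with it $\lambda_1$, $\lambda_0$, the line $l_{\boldsymbol y}$ and the secant, are unique.
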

\begin{proof} Let notation be as above and let $q=(1:{\boldsymbol q}:{\boldsymbol q'}: z)\in\p^{2r+3}$
be a fixed general point, where ${\boldsymbol  q}, {\boldsymbol  q'}\in\mathbb C^{r+1}$ and $ z\in\mathbb C^*$ by generality
of $q$. Moreover, we can also suppose that all the pairs of distinct points $p_1,p_2\in X$ such that $q\in\langle p_1,p_2\rangle$ are of the form $p_i=(1:{\boldsymbol  x_i}:{\boldsymbol  x_i'}:n({\boldsymbol  x_i}))$ with ${\boldsymbol  x_i}\in \mathbb C^{r+1}$ for  $i=1,2$.
We shall essentially  argue as  in \cite[Proposition 8.4]{clerc} (see also \cite[Proposition 8.16]{landsbergmanivel2}).
Modulo a translation in $\mathbb C^{r+1}$ we can suppose without loss of generality that ${\boldsymbol q}=\mathbf 0$, that 
$n({\boldsymbol q'})\neq 0$ and that 
$p_i=(1:{\boldsymbol  x_i}:\varphi({\boldsymbol  x_i}):n({\boldsymbol x_i}))\in X$, $i=1,2$, are two distinct points such that $q\in\langle p_1,p_2\rangle$. We have to show that  the equation 
$$q=\lambda p_1+\mu p_2$$
has  uniquely determined solutions.
The above equation  splits into the following four: 
\begin{equation}
\label{E:tatayoyo}
\lambda+\mu=1, \qquad {\boldsymbol x_2}=-\frac{\lambda}{\mu}{\boldsymbol  x_1}, \quad \varphi({\boldsymbol x_1})=\frac{\mu}{\lambda}{\boldsymbol   q'}\quad \mbox{and}\quad n(
{\boldsymbol  x_1})=\frac{\mu^2}{\lambda(\mu-\lambda)} z.
\end{equation}

Thus relation \eqref{invol} implies  
$$n({\boldsymbol x_1}){\boldsymbol x_1}=(\ell\circ \varphi)(\varphi({\boldsymbol x_1}))=\frac{\mu^2}{\lambda^2}(\ell\circ\varphi)({\boldsymbol q'})$$
so that 
$${\boldsymbol x_1}=\frac{\mu-\lambda}{\lambda z}(\ell\circ\varphi)({\boldsymbol    q'})\qquad
\mbox{and}\qquad
 {\boldsymbol x_2}=\frac{\lambda-\mu}{\mu z}(\ell\circ\varphi)({\boldsymbol   q'}).$$

We deduce  that $p_1,p_2\in X$ are uniquely determined by $q$ as soon as we show that $(\lambda,\mu)$ are uniquely determined by this point.
Since $\lambda+\mu=1$, we shall prove that $\lambda\mu$ is uniquely determined. 

From relation \eqref{invol}, it follows that there exists a cubic form $m$ such that 
$ \varphi\circ  \ell \circ\varphi ({\boldsymbol y})=m({\boldsymbol y}){\boldsymbol y}
$  for every ${\boldsymbol y}\in \mathbb C^{r+1}$. This yields  $m(\varphi({\boldsymbol x}))=n({\boldsymbol x})^2$  for every ${\boldsymbol x}$.  Applying $m$
to $\varphi({\boldsymbol  x_1})=\frac{\mu}{\lambda}{\boldsymbol   q'}$ we deduce
$n({\boldsymbol  x_1})^2=\frac{\mu^3}{\lambda^3}m({\boldsymbol q'})$.  Combining this with 
the last relation in (\ref{E:tatayoyo}) and remarking that  $(\mu-\lambda)^2=1-4\lambda\mu$ (since $\lambda+\mu=1$), we finally get 
$$\lambda\mu=\frac{m({\boldsymbol q'})}{ z^2+4m({\boldsymbol  q'})}\; ,$$
concluding
the proof.
\end{proof}
\smallskip

\begin{rem}{\rm Following a classical approach of Bronowski, see \cite{Bronowski},  it was proved in \cite{cmr} that
an irreducible variety $X\subset\p^{2r+3}$ with one apparent double point (OADP variety) 
 projects birationally onto $\p^{r+1}$ from a general tangent space $T=T_xX$,
see also \cite{cilibertorusso} for generalizations.
Letting notation be as in the discussion before Theorem \ref{Cremonascroll}, then $\pi_T(E)=E'\subset\p^{r+1}$
is hypersurface of degree $d=\deg(E')\geq 1$, which is a birational projection of the quadratic Veronese embedding
of $\p^r$. In particular $1\leq d\leq 2^r$.  In
 \cite[Theorem 5.3]{cr2} it is proved that for normal OADP varieties having $d=\deg(E')=1$, not scrolls over a curve, the birational map $\pi_T^{-1}:\p^{r+1}\map X\subset\p^{2r+3}$ is given
 by a linear system of cubics hypersurfaces having double points along the base locus of the quadro-quadric Cremona transformation $\pi_{T|E}^{-1}:E'\map E$. In particular this class of normal OADP varieties is contained in the class
 of $X=\overline X(r+1,3,3)\subset\p^{2r+3}$ so that the subsequent classification results of $\overline X(r+1,3,3)$'s of different
 kind or dimension can be reformulated for the above class of normal varieties, see \cite{cr2}. Conjecturally every $\overline X(r+1,3,3)$ should be projectively
 equivalent to a $X_{\mathbb J}\subset\p^{2r+3}$, see discussion in Remark \ref{classjordangeom} below. The known examples  of twisted cubics over cubic comples Jordan algebras are  normal varieties even if we are not aware of a general proof of this fact and neither of the fact that a $\overline X(r+1,3,3)$
 is a priori normal.
 If these were true,  one would deduce a  one to one correspondence between normal OADP varieties with $d=\deg(E')=1$ and $\overline X(r+1,3,3)$ and also probably with twisted cubic over Jordan algebras.}
\end{rem}
\medskip

Cremona transformations have been studied since a long time and several classification results have been obtained, especially for quadro-quadric transformations. These classifications can be used to describe all the  $\overline{X}(r+1,3,3)$  in low dimension or under suitable hypothesis. We shall begin by recalling some easy results on Cremona transformations of type $(2,2)$ having smooth base loci, see also \cite{EinShB} and \cite[\S 4]{QEL1} for the study of  related  objects. 
\medskip

\begin{prop}\label{special22} Let $\varphi:\p^r\map\p^r$ be a Cremona transformation of type $(2,2)$ whose base locus $B\subset\p^r$ is smooth. Then one of the following holds:
\begin{enumerate}
\item $r\geq 2$, $B=Q^{r-2}\amalg p$ with $Q^{r-2}$ a smooth quadric hypersurface and $p\not\in\langle Q^{r-2}\rangle$;
\item $r=5$ and $B$ is projectively equivalent to the Veronese surface $\nu_2(\p^2)$;
\item $r=8$ and $B$ is projectively equivalent to the Segre variety $\p^2\times\p^2$;
\item $r=14$ and $B$ is projectively equivalent to the Grassmann variety $G_2(\mathbb C^6)$;
\item $r=26$ and $B$ is projectively equivalent to the 16-dimensional $E_6$ variety.
\end{enumerate}
\end{prop}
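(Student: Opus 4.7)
The plan is to divide the analysis according to whether the smooth base scheme $B\subset \p^r$ is connected or not, treating each case by rather different methods.

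First I would handle the case in which $B$ is smooth and \emph{connected} (equivalently, smooth and irreducible). In this situation $\varphi$ is a \emph{special} Cremona transformation of type $(2,2)$ in the sense of Ein--Shepherd-Barron, and I would invoke their classification of such maps: the smooth connected base locus of a quadro-quadric Cremona transformation is projectively equivalent to one of the four Severi varieties, namely the Veronese surface $\nu_2(\p^2)\subset\p^5$, the Segre variety $\p^2\times\p^2\subset\p^8$, the Grassmannian $G_2(\C^6)\subset\p^{14}$, or the $16$-dimensional $E_6$-variety in $\p^{26}$. This directly yields cases (2)--(5) of the statement. Alternatively, one may recover this from Zak's theorem on Severi varieties by remarking that the linear system $\Lambda=|\I_B(2)|$ realizes $B$ as a QEL-variety whose secant defect is forced to be small by the fact that the image is $\p^r$ and the inverse is again given by quadrics.

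Next I would treat the disconnected case. The key observation is that if $p_1$ and $p_2$ belong to two distinct connected components of $B$, then every quadric in $\Lambda$ vanishes identically on the line $\overline{p_1 p_2}$, since its restriction to $\overline{p_1 p_2}\cong \p^1$ is a degree-$2$ form having two base points; consequently $\varphi$ contracts each such secant line to a single point. Writing $B=B_1\amalg\cdots\amalg B_k$ with $k\geq 2$, the union of joins $J=\bigcup_{i\neq j}J(B_i,B_j)$ must then be contained in the exceptional locus of $\varphi$, forcing $J$ to be of pure codimension $1$ in $\p^r$. A dimension count combined with the symmetry granted by Corollary \ref{involutions} (namely, $\varphi^{-1}$ is also of type $(2,2)$ and has projectively equivalent base locus) should force $k=2$, $\dim B_2=0$ and $\dim B_1=r-2$. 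Choosing coordinates so that $B_2=\{(1:0:\cdots:0)\}$ and $B_1\subset H=V(x_0)$, the quadrics through $B$ take the explicit form $\lambda Q(x_1,\ldots,x_r)+x_0\, L(x_1,\ldots,x_r)$ with $Q$ cutting out $B_1$ in $H$ and $L$ an arbitrary linear form; the smoothness assumption on $B_1$ forces $Q$ to define a smooth quadric $Q^{r-2}\subset H$, and obviously $p\notin H=\langle Q^{r-2}\rangle$, yielding case (1).

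The hard part is precisely the dimension count in the disconnected case: ruling out configurations with $k\geq 3$ components, or with $\dim B_2\geq 1$, seems to require more than a soft argument. The cleanest route I foresee is to work on the smooth blow-up $\widetilde{\p^r}=\Bl_B(\p^r)\to\p^r$ with exceptional divisor $E=\sum E_i$, and use the two relations $\widetilde\varphi^*H=2\pi^*H-E$ and $\pi^*H=2\widetilde\varphi^*H-E'$ (where $E'$ is the analogous divisor for $\varphi^{-1}$), together with the equality $(\widetilde\varphi^*H)^r=\deg(\varphi)=1$, to obtain numerical constraints on the Segre classes of the normal bundles $N_{B_i/\p^r}$; sorting these equations out component by component should rigidly exclude all of the extraneous configurations and leave only the possibility listed in (1).
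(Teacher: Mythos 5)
Your overall architecture matches the paper's: resolve $\varphi$ on $\Bl_{B}(\p^r)$, use the two relations $H_2\sim 2H_1-E$ and $H_1\sim 2H_2-E'$, reduce the irreducible case to Zak's Severi varieties, and identify the reducible case with a quadric plus a point. The irreducible case is fine (the paper shows that $\pi_1(E')$ is the secant variety of $B$ and that $B$ is a QEL-manifold with secant defect $\frac{1}{2}\dim B$, hence a Severi variety, and then quotes Zak; invoking Ein--Shepherd-Barron is an acceptable shortcut). But the heart of the reducible case is missing from your sketch. You explicitly defer ``ruling out configurations with $k\geq 3$ components, or with $\dim B_2\geq 1$'' to an unspecified sorting of numerical constraints, and that is exactly where the content lies. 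The paper's mechanism is short but essential: combining the two relations gives $E'_1+\cdots+E'_s\sim 3H_1-2(E_1+\cdots+E_s)$, so the image in the source $\p^r$ of the exceptional locus of $\varphi^{-1}$ is a divisor of degree $3$; since each irreducible component contributes at least one to this degree, the number $s$ of components satisfies $s\le 3$. Then $s=3$ forces each $B_i$ to be a linearly embedded $\p^{r-2}$ (the intersection of two of the degree-one images), and smoothness of the general member of $|\I_B(2)|$ along such a $\p^{r-2}$ forces $r-2\le\frac{r-1}{2}$, hence $r=2$; while $s=2$ forces one component to be the quadric $\pi_1(E'_1)\cap\pi_1(E'_2)$, and the count $h^0(\I_B(2))=r+1\le h^0(\I_{B_1}(2))=r+2$ forces the remaining component to be a single point. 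None of this is carried out in your proposal.

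Moreover, the outcome you predict for the disconnected analysis is wrong as stated: for the standard quadratic transformation of $\p^2$ the base locus consists of three non-collinear reduced points, i.e.\ three connected components, and this is still an instance of case (1) (a smooth $Q^{0}\subset\p^1$ together with a point off its span). An argument that ``rigidly excludes'' $k\ge 3$ would therefore exclude a genuine example; the correct statement is $s\le 3$, with $s=3$ occurring only for $r=2$. A smaller slip: a quadric through two points $p_1,p_2$ does not ``vanish identically'' on the line $\overline{p_1p_2}$; what is true is that the restriction of $|\I_B(2)|$ to that line has base divisor $p_1+p_2$ of full degree $2$, so the moving part is trivial and the line is contracted --- which is the fact you actually need.
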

\begin{proof} Let notation be as in the proof of Corollary \ref{involutions}. Let $B_1,\ldots, B_s$ be the irreducible components of $B$ and let $B'_1,\ldots, B'_t$ be the irreducible components of $B'$. It is easy to see that the general quadric hypersurface defining $\varphi$ is smooth at every point of $B$.

The smoothness of $B$ ensures that $B_j\cap B_l=\emptyset$
for every $j\neq l$ so that $\widetilde \p^r$ is smooth and naturally isomorphic to the successive blow-up of the $B_i$'s in some
order.  In particular $s=t$ (see also  Corollary \ref{involutions}). 

Let $E_i=\pi_1^{-1}(B_i)$ and let $E'_i=\pi_2^{-1}(B'_i)$. Let $H_j=\pi_j^*(H)$ with $j=1,2$ and with $H\subset\p^r$ a hyperplane.
We have  the following linear equivalence relation of divisors on $\widetilde\p^r$, see proof of Corollary \ref{involutions}:
$$E_1+\cdots+E_s\sim 3H_2-2(E'_1+\cdots+E'_s)$$ and  $$E'_1+\cdots+E'_s\sim 3H_1-2(E_1+\cdots+E_s).$$ Thus the scheme-theoretic images $\pi_2(E_1+\cdots+E_s)$  and
$\pi_1(E'_1+\cdots+E'_s)$ have degree 3, yielding $s\leq 3$. 

Suppose $s=3$. Then $\deg(\pi_2 (E_i))=\deg(\pi_1(E'_i))=1$ for every $i=1,2,3$ so that $B_i$ and $B'_i$ are a linearly embedded $\p^{r-2}$ since the intersection of two distinct hyperplanes  $\pi_1(E'_i)$, respectively $\pi_2(E_i)$, is contained in the base locus.
The smoothness of a general quadric defining $\varphi$ along each $B_i=\p^{r-2}\subset\p^r$ implies
$r-2\leq\frac{r-1}{2}$, that is $r\leq 3$. Thus necessarily $r=2$ since $h^0(\mathcal I_B(2))=r+1$ and we are in case (1).

If $s=2$, we can suppose $\deg(\pi_1(E'_1))=2$ and $\deg(\pi_1(E'_2))=1$. Thus the quadric $\pi_1(E'_1)\cap \pi_1(E'_2)$ is an irreducible component, let us say $B_1$, of $B$. The birationality of $\varphi$ implies $h^0(\I_B(2))=r+1$. Since $h^0(\I_B(2))\leq h^0(\I_{B_1}(2))=r+2$, we see that $B_2$ consists of only one point and  we are in case (1).

If $s=1$, the above diagram \eqref{diagram} shows that for general $q\in \pi_1(E'_1)\setminus B$ there exists a linear space $\p^{r-1-\dim(B')}_q$ passing through $q$ and cutting
$X$ along a quadric hypersurface of dimension $r-2-\dim(B)$. If $\varphi(q)=q'$, then naturally 
$\p^{r-1-\dim(B')}=\p((N_{B'/\p^r})_x^*)$. This immediately implies that $\pi_1(E'_1)$ is the variety of secant lines to $B$ and that $B\subset\p^r$ is a $QEL$-manifold
of type $\delta(B)=\frac{1}{2}\dim(B)$, see \cite[Proposition 4.2]{QEL1}. Indeed, $r-2-\dim(B)=\delta(B)=2\dim(B)+1-\dim(\pi_1(E'_1))$ yields $\dim(B)=\frac{2}{3}(r-2)$, see also the computations in  \cite{EinShB}. Thus $B\subset\p^r$ is a $QEL$-manifold and also a Severi variety in the sense of Zak.  The classification of Severi varieties due to Zak, see \cite{Zak} and  also \cite[Corollary 3.2]{QEL1}, assures that we are in one of the cases (3)--(6).
\end{proof}
\smallskip

The classification of arbitrary  $\overline X(r+1,3,3)\subset\p^{2r+3}$ is difficult due to the existence of a lot of
singular examples. By Theorem \ref{Cremonascroll} and Corollary \ref{involutions} this classification is closely related to that of all Cremona involutions of type $(2,2)$ on $\p^r$ and also to the classification of arbitrary complex cubic Jordan algebras of dimension $r+1$. 
On the contrary for  smooth $\overline X(r+1,3,3)$'s  a complete classification is  possible due to Proposition \ref{special22} and Theorem \ref{Cremonascroll}.
\smallskip

\begin{thm}\label{smooth33} Let $X=\overline X(r+1,3,3)\subset\p^{2r+3}$ be smooth. Then one of the following holds, modulo projective equivalence:
\begin{enumerate}
\item[(i)]  $X$ is either $S_{1\ldots122}$ or $S_{1\ldots113}$;
\item[(ii)] $X$ is the Segre embedding $\p^1\times Q^r\subset\p^{2r+3}$ with $Q^r$ a smooth hyperquadric;
\item[(iii)] $r=5$ and $X$ is the Lagrangian Grassmannian $LG_3(\mathbb C^6)\subset\p^{13}$;
\item[(iv)] $r=8$ and $X$ is  the  Grassmannian $G_3(\mathbb C^6)\subset\p^{19}$;
\item[(v)] $r=14$ and $X$ is  the Orthogonal Grassmannian $OG_6(\mathbb C^{12})\subset\p^{31}$;
\item[(vi)] $r=26$ and $X$ is  the $E_7$--variety in $\p^{55}$.
\end{enumerate}
\end{thm}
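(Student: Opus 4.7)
\emph{Proof plan.} The strategy is to apply Theorem \ref{Cremonascroll} together with Proposition \ref{special22} to reduce the classification of smooth $X=\overline X(r+1,3,3)$ to the classification of quadro--quadric Cremona transformations of $\p^r$ having smooth base locus, and then to identify each case with one of the named smooth models.

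First I would apply Theorem \ref{Cremonascroll} to $X$: if the associated Cremona transformation $\psi_x:\p^r\map\p^r$ is equivalent to a projective transformation as a birational map, then by part (b) of that theorem $X$ is projectively equivalent to one of the two smooth rational normal scrolls $S_{1\ldots122}$ or $S_{1\ldots113}$, which is exactly case (i). Otherwise $\psi_x$ is of type $(2,2)$, and since $X$ is smooth, part (4) of Theorem \ref{Cremonascroll} guarantees that its base locus $B_x=\mathcal L_x\subset \p^r$ is a smooth subscheme.

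Next I would apply Proposition \ref{special22} to $\psi_x$: the smoothness of $B_x$ forces it to be one of the five types listed there, and each of these types is rigid in the sense that the ambient dimension $r$ together with the projective equivalence class of $B_x$ in $\p^r$ is uniquely determined (for case (1), $r\geq 2$ is arbitrary; for cases (2)--(5), $r\in\{5,8,14,26\}$ is fixed and $B_x$ is the corresponding homogeneous variety). The parametrization formula from Corollary \ref{involutions} then shows that $X$ is reconstructed from $\psi_x$ by appending the cubic form $n(\boldsymbol x)$ cutting out the secant scheme of $B_x$; hence both the linear system defining $\psi_x$ (two--dimensional family of quadrics on $\p^r$ with base locus $B_x$) and the cubic $n$ depend only on $B_x$ up to projective equivalence, and consequently $X$ itself is determined up to projective equivalence by the class of $B_x\subset\p^r$.

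To conclude I would exhibit, for each of the five types of Proposition \ref{special22}, one smooth $\overline X(r+1,3,3)$ from the list (ii)--(vi) whose Cremona base locus $\mathcal L_x$ is of that type. This is carried out by computing, for each of $\p^1\times Q^r$, $LG_3(\mathbb C^6)$, $G_3(\mathbb C^6)$, $OG_6(\mathbb C^{12})$ and the $E_7$--variety, the Hilbert scheme of lines through a general point in its natural embedding in the projectivized tangent space. These are classical and yield respectively $Q^{r-2}\amalg\{\text{pt}\}$, the Veronese surface $\nu_2(\p^2)$, the Segre variety $\p^2\times\p^2$, the Grassmannian $G_2(\mathbb C^6)$, and the $16$--dimensional $E_6$--variety, matching cases (1)--(5) of Proposition \ref{special22} in the stated order. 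Combined with the uniqueness statement of the previous paragraph, this forces $X$ to be projectively equivalent to the corresponding model.

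The main obstacle is the rigidity step: showing that $X$ is determined up to projective equivalence by the projective equivalence class of $B_x$ alone. For the Cremona transformation $\psi_x$ this amounts to the count $h^0(\mathcal I_{B_x}(2))=r+1$, which is classical in each of the five cases; for the cubic $n$ it relies on identifying it with the reduced equation of the secant variety of $B_x$, as guaranteed by the proof of Corollary \ref{involutions}. The identification of the resulting variety with the specific named model, as opposed to an abstract $\overline X(r+1,3,3)$ having the correct $\mathcal L_x$, is then a direct comparison with the Jordan--algebraic descriptions recalled in \S\ref{S:JordanAlgebra}.
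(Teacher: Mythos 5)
Your proposal is correct and follows essentially the same route as the paper: reduce via Theorem \ref{Cremonascroll} to the dichotomy ``projective transformation (scroll case) versus type $(2,2)$ with smooth base locus,'' invoke Proposition \ref{special22} to pin down $B_x$, and recover $X$ from the complete linear system of cubics singular along $B'_x$, matching each base locus with the known model. The only difference is presentational: you spell out the rigidity step and the computation of $\mathcal L_x$ for each named model, where the paper delegates this identification to the cited references.
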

\begin{proof} If the associated Cremona transformation is equivalent to a projective transformation we are in case (i) by Theorem \ref{Cremonascroll}.
Otherwise, by Theorem \ref{Cremonascroll},  the associated Cremona transformation $\psi_x$ is of type $(2,2)$ with smooth base locus.
 Let  $\phi:\p^{r+1}\map X\subset\p^{2r+3}$ be the birational representation of $X$ given by the linear system of cubic hypersurfaces having double points
along $B'_x$. Then $B'_x$ is projectively equivalent to a variety as in cases (1)--(5) of Proposition \ref{special22} so that $X$ is
as in cases (ii)--(vi) by a well known description of the corresponding varieties, see  for example \cite{landsbergmanivel0,landsbergmanivel, mukai}.
\end{proof}
\medskip

Now we apply the classification of quadro-quadric Cremona transformations in low dimension to deduce the corresponding classification for varieties 3-covered by twisted cubics. For instance, since every birational map $\varphi: \mathbb P^1\dashrightarrow \mathbb P^1$ is equivalent to  a projective transformation, one immediately deduces that a surface $\overline{X}(2,3,3)\subset \mathbb P^5$  is necessarily of Castelnuovo type, namely it is one of the scrolls $S_{13}$ or $S_{22}$. Then,  by projecting from $m-3$ general points, one gets the following result: 

\begin{coro}\label{class233} Let $X=\overline X(2,m,m)\subset\p^{m+2}$, $m\geq 3$. Then $X$ is projectively equivalent to a smooth rational normal
scroll of degree $m+1$.
\end{coro}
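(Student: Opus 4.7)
The plan is to identify $X$ as a non-degenerate surface of minimal degree in $\p^{m+2}$ and then invoke the classical classification of such surfaces, excluding two pathological cases. First, applying Theorem~\ref{P:bounddeg} with $r=1$, $n=\dd=m$ gives
$$\deg X \;\leq\; \frac{m^{2}}{m-1} \;=\; m+1+\frac{1}{m-1} \;<\; m+2,$$
so $\deg X \leq m+1$. On the other hand, the classical lower bound for a non-degenerate irreducible subvariety (see \S\ref{S:varmini}) gives $\deg X \geq \codim(X)+1 = m+1$. Hence $\deg X = m+1$ and $X$ is a surface of minimal degree in $\p^{m+2}$. Equivalently, following the hint in the text, one can reach this degree count inductively: for $m\geq 4$ the secant variety of $X$ has dimension at most $5 < m+2$, so projection from a general point $p\in X$ is birational and lands on a $\overline{X}(2,m-1,m-1)\subset\p^{m+1}$ (the $(m-1)$-covering family being the images of the degree-$m$ curves through $p$), dropping the degree by one; iterating $m-3$ times reduces to the already-classified case $\overline{X}(2,3,3) = S_{13}$ or $S_{22}$ in $\p^5$.

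By Theorem~\ref{varmini}, the only non-degenerate surfaces of degree $m+1$ in $\p^{m+2}$ are the rational normal scrolls $S_{a,b}$ with $0 \leq a \leq b$ and $a+b=m+1$, together with the Veronese surface $\nu_{2}(\p^{2}) \subset \p^{5}$, which occurs only when $m=3$. The Veronese is immediately ruled out: every irreducible curve on $\nu_{2}(\p^{2})$ is the $\nu_{2}$-image of an irreducible plane curve, hence has even degree, so no irreducible degree-$3$ curve exists on it, contradicting the $3$-covering hypothesis.

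The main obstacle is excluding the singular scroll $S_{0, m+1}$, a cone with vertex $v$ over a rational normal curve of degree $m+1$. For this I would pass to the minimal resolution $\pi:\mathbb{F}_{m+1} \to S_{0,m+1}$ contracting the negative section $\sigma$ (with $\sigma^{2} = -(m+1)$); letting $F$ denote a fiber class, $\pi^{*}H \sim \sigma + (m+1)F$. Any irreducible curve $\widetilde{C} \neq \sigma$ of class $a\sigma + bF$ with $a \geq 1$ satisfies $\widetilde{C} \cdot \sigma = -a(m+1)+b \geq 0$, forcing $b \geq a(m+1)$, and hence $\deg \pi(\widetilde{C}) = \pi^{*}H \cdot \widetilde{C} = b \geq m+1$. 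The remaining irreducible curves on $\mathbb{F}_{m+1}$ are fibers, mapping to lines through $v$ of degree $1$. Thus every irreducible curve on $S_{0,m+1}$ has degree $1$ or $\geq m+1$, which precludes any covering family by irreducible curves of degree $m$ for $m \geq 2$. Combining all of the above, $X = S_{a,b}$ with $1 \leq a \leq b$ and $a+b=m+1$, i.e.\ a smooth rational normal scroll of degree $m+1$, as required.
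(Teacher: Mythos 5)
Your proof is correct, but it takes a genuinely different route from the paper's. The paper handles $m=3$ via Theorem \ref{Cremonascroll}: since every birational self-map of $\p^1$ is a projective transformation, the Cremona map $\psi_x$ attached to a $\overline X(2,3,3)$ falls under case (a) of that theorem, whence $X$ is $S_{13}$ or $S_{22}$; for $m\geq 4$ it then projects from $m-3$ general points to reduce to this case (a reduction that implicitly relies on Lemma \ref{L:Siena}). You instead apply the degree bound of Theorem \ref{P:bounddeg} to get $\deg X\leq m^2/(m-1)<m+2$, hence $\deg X=m+1=\codim(X)+1$, so $X$ is a surface of minimal degree; Theorem \ref{varmini} then leaves only the scrolls $S_{a,b}$ with $a+b=m+1$ and, for $m=3$, the Veronese surface, and you eliminate the Veronese (it carries only even-degree curves) and the cone $S_{0,m+1}$ (on the resolution $\mathbb F_{m+1}$ every irreducible curve other than a ruling line has image of degree $b\geq a(m+1)\geq m+1$) by correct intersection-theoretic computations. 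Your argument is uniform in $m\geq 3$, bypasses the tangential-projection machinery of \S\ref{S:3-covered} entirely, and is essentially the strategy the paper itself uses in \S\ref{S:bounddegree} for $n$ large (positivity of $\theta$ in \eqref{E:theta}, which for $r=k=1$ equals $m>0$), pushed down to all $m\geq 3$ by treating the exceptional minimal-degree surfaces by hand. What the paper's route buys in exchange is the connection with quadro-quadric Cremona transformations, which is the mechanism that generalizes to Corollaries \ref{class333} and \ref{class433}; your route is self-contained but specific to the surface case.
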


The classification of  birational maps of degree 2  on $\mathbb P^2$ is classical, easy and well known.  From this we shall immediately deduce the classification of arbitrary $\overline X(3,3,3)\subset\p^7$.

\begin{coro}\label{class333} Let $X=\overline X(3,3,3)\subset\p^7$. Then $X$ is projectively equivalent to \begin{enumerate}
 \item a smooth rational normal scroll of degree 5, that is $S_{113}$ or $S_{122}$; or
\item the variety $\p^1\times Q^2\subset\p^7$ 
where $Q^2\subset\p^3$ is  an irreducible quadric; 
 or 
\item the normal del Pezzo 3-fold obtained as the image of $\p^3$ by the linear system of cubic
surfaces having three infinitely near double points or equivalently described as the twisted cubic over the Jordan algebra
$A_3$ of Theorem \ref {T:associativeALGEBRAdim3} and admitting the parametrization (\ref{E:paramdiamond3}).
\end{enumerate}
\end{coro}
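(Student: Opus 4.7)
The plan is to invoke Theorem~\ref{Cremonascroll} to reduce the problem to the classification of quadratic Cremona transformations of $\p^2$, and then to run the three resulting cases in parallel. First, by Theorem~\ref{Cremonascroll} the associated Cremona map $\psi_x:\p^2\dashrightarrow\p^2$ is either equivalent to a projective transformation---in which case conclusion (1) applies, $X$ being necessarily one of the scrolls $S_{113}$ or $S_{122}$---or it is of bidegree $(2,2)$. In the remaining case, Corollary~\ref{involutions} allows us to assume, after composing with a linear map on the target, that $B_x=B'_x$, so the picture depends only on the single length-three 0-dimensional subscheme $B'_x\subset E'=\p^2$.

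Next, I would invoke the classical trichotomy for base schemes of quadro-quadric Cremona transformations of $\p^2$, distinguishing three types according to the scheme structure of $B'_x$: (I)~three distinct non-collinear reduced points; (II)~a reduced point together with a length-two curvilinear scheme concentrated at a second point; (III)~a curvilinear length-three scheme supported at a single point. A standard dimension count confirms these are the only possibilities compatible with $\psi_x$ being birational of bidegree $(2,2)$.

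The key step is then to apply part~(2) of Theorem~\ref{Cremonascroll}, which realizes $X$ as the image of $\phi:\p^3\dashrightarrow\p^7$ defined by the complete linear system $|\mathcal{L}(B'_x)|$ of cubic hypersurfaces in $\p^3$ having double points along $B'_x\subset E'\subset\p^3$. In each of the three cases one chooses adapted homogeneous coordinates, writes out an explicit basis of $|\mathcal{L}(B'_x)|$, and identifies the image. In case (I), taking $E'=V(x_0)$ and $B'_x=\{[0{:}1{:}0{:}0],[0{:}0{:}1{:}0],[0{:}0{:}0{:}1]\}$, the eight surviving cubic monomials are $x_0^3$, $x_0^2x_i$ for $i=1,2,3$, $x_0x_ix_j$ for $1\leq i<j\leq 3$, and $x_1x_2x_3$, so that $\phi$ is the Segre embedding of $\p^1\times\p^1\times\p^1$, which is $\p^1\times Q^2$ for $Q^2\subset\p^3$ a smooth quadric. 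In case (II), an analogous computation in which one pair of reduced points of (I) has collided produces the Segre embedding $\p^1\times Q^2_0\subset\p^7$, where $Q^2_0$ is the quadric cone. In case (III), after normalizing $B'_x$ to a standard curvilinear scheme supported at $[0{:}0{:}0{:}1]$, the explicit basis of $|\mathcal{L}(B'_x)|$ reproduces precisely the parametrization \eqref{E:paramdiamond3}, identifying $X$ with the twisted cubic $X_{A_3}$ of Theorem~\ref{T:associativeALGEBRAdim3}.

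The main obstacle is the bookkeeping in cases (II) and (III): one has to translate the non-reduced scheme structure of $B'_x$ into linear conditions on the coefficients of a general cubic form on $\p^3$, verify that the resulting linear system has the expected projective dimension seven, and then recognize the image of $\phi$ intrinsically. Once the explicit parametrizations are in hand, the identifications with $\p^1\times Q^2$ (smooth or a cone) and with $X_{A_3}$ are immediate from the descriptions given in \S\ref{S:JordanAlgebra} and the example following Theorem~\ref{T:associativeALGEBRAdim3}.
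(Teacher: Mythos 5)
Your proposal is correct and follows essentially the same route as the paper: both reduce via Theorem~\ref{Cremonascroll} to the classical trichotomy of plane quadratic Cremona transformations (base scheme consisting of three distinct points, of a point plus two infinitely near points, or of three infinitely near points) and then identify the image of the associated linear system of cubics in $\p^3$ with double points along that scheme, obtaining the scrolls, $\p^1\times Q^2$ (smooth or the cone $S_{02}$), and the parametrization \eqref{E:paramdiamond3} respectively. The only difference is that you write out the monomial bases explicitly where the paper merely cites the corresponding descriptions.
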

\begin{proof} If $\psi_x:\p^2\map\p^2$ is equivalent to a projective transformation, then we are in case (1) by Proposition \ref{Cremonascroll}. Otherwise
$\psi_x:\p^2\map\p^2$ is a  Cremona transformation of type $(2,2)$. If $\psi_x$  is the ordinary quadratic transformation, then the cubic surfaces defining $\phi:\p^3\map X\subset\p^7$
have three double distinct points at the indeterminacy points of $\phi_x^{-1}$ so that this linear system coincides with the complete linear system of cubic surfaces having these three double points. In this case $X\subset\p^7$ is
projectively equivalent to the Segre embedding of $\p^1\times Q^2$ with $Q^2$ a smooth quadric surface, see also Theorem 
\ref{smooth33}.

If $\psi_x$ has two infinitely near base points and another base  point, reasoning as above we have that $X\subset\p^7$ is projectively equivalent to the Segre embedding of $\p^1\times S_{02}$.

If $\psi_x$ has three infinitely near base points, then $\phi$ is given by a  linear system of cubic surfaces having three infinitely near double points and we are in case (3).
\end{proof}
\medskip

Using the last result,  we can now classify irreducible 3-folds $X=\overline X(3,m,m)\subset\p^{m+4}$ for every $m\geq 4$. If $m\geq 4$, by projecting from $m-3$ general points
we get an irreducible 3-fold $X_{m-3}\subset\p^{7}$ which is 3-covered by twisted cubics. If $X_{m-3}$ is a smooth rational normal scroll of degree 5, then
$X\subset\p^{m+4}$ is a smooth rational normal scroll of degree $m+2$. If $X_{m-3}\subset\p^7$ is a  3-fold as in Corollary \ref{class333}, then $X\subset\p^{m+4}$
would be a normal del Pezzo 3-fold of degree $6+m-3=m+3\geq 7$, which is linearly normal. Moreover since $SX_{m-3}=\p^7$, we deduce $\dim(SX)=7$.
The normal del Pezzo 3-folds $X\subset\p^{m+4}$ of degree $m+3$, $m\geq 4$, are smooth and with $\dim(SX)=6$, see \cite{fujita2}. In conclusion
we have proved the following result:

\begin{coro}\label{class3mm} Let $X=\overline X(3, m,m)\subset\p^{m+4}$ with $m\geq 4$. Then  $X$ is  a smooth rational normal scroll of degree $m+2$.
\end{coro}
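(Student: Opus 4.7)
The plan is to follow the inductive reduction sketched in the paragraph preceding the statement: project $X$ from $m-3$ general points to reduce to the already-classified case $X_{m-3}=\overline X(3,3,3)\subset\p^7$ of Corollary \ref{class333}, and then combine degree bookkeeping with Fujita's classification of del Pezzo threefolds to isolate the minimal-degree case.

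I would begin by picking general points $x_1,\ldots,x_{m-3}\in X$, letting $\Lambda=\langle x_1,\ldots,x_{m-3}\rangle$ (of dimension $m-4$ for generic choices), and forming the linear projection $\pi_\Lambda:X\dashrightarrow X_{m-3}\subset\p^7$. To verify that $X_{m-3}$ lies in $\overline X(3,3,3)$, I invoke Theorem \ref{T:Siena}(1): through any $m$ general points of $X$ there passes a unique rational normal curve $C$ of degree $m$. For generic $y_1,y_2,y_3\in X$, the unique such $C$ through $x_1,\ldots,x_{m-3},y_1,y_2,y_3$ spans $\p^m$ and meets $\Lambda$ in the $\p^{m-4}$ spanned by the $x_i$ on $C$; hence $\pi_\Lambda(C)$ is a rational normal curve of degree $3$ through $\pi_\Lambda(y_1),\pi_\Lambda(y_2),\pi_\Lambda(y_3)$. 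Non-degeneracy of $X$ in $\p^{m+4}$ gives non-degeneracy of $X_{m-3}$ in $\p^7=\p^{\overline\pi(2,3,3)-1}$, so $X_{m-3}=\overline X(3,3,3)$.

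Corollary \ref{class333} then leaves three possibilities: (a) $X_{m-3}$ is a smooth rational normal scroll of degree $5$; (b) $X_{m-3}=\p^1\times Q^2$; or (c) $X_{m-3}$ is the normal del Pezzo threefold of degree $6$ associated to $A_3$. In cases (b) and (c), projections from simple points are birational and drop the degree by one at each step, so $\deg X=(m-3)+6=m+3$. Being embedded in its maximal-dimension linear span by Theorem \ref{T:Siena}(i), $X\subset\p^{m+4}$ is linearly normal, hence a normal del Pezzo threefold of degree $m+3$. Since $\pi_\Lambda(SX)$ dominates $SX_{m-3}=\p^7$ (for (b) by a standard secant-dimension count for the Segre, for (c) by Corollary \ref{oadp}), we obtain $\dim SX\geq 7$, which contradicts Fujita's theorem \cite{fujita2} asserting $\dim SX=6$ for every normal del Pezzo threefold of degree $m+3$ in $\p^{m+4}$ with $m\geq 4$. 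Thus only case (a) remains: $\deg X=m+2=\operatorname{codim}X+1$, so $X$ is of minimal degree, and since $m\geq 4$ pushes the ambient space beyond $\p^5$, Theorem \ref{varmini} forces $X$ to be a rational normal scroll $S_{a_0,a_1,a_2}$ with $a_0+a_1+a_2=m+2$.

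It remains to exclude $a_0=0$, i.e.\ cones. If $X$ were a cone with vertex $v$, then projecting from general points of $X$ distinct from $v$ preserves the cone structure (each projection yields a cone over a projection of the base, with vertex the image of $v$), so $X_{m-3}$ would itself be singular, contradicting case (a). Hence $a_0\geq 1$ and $X$ is a smooth rational normal scroll of degree $m+2$. The main obstacle is the appeal to Fujita's external classification of normal del Pezzo threefolds to eliminate cases (b) and (c); the remaining steps are routine bookkeeping with projections, degree counts, and the earlier classifications.
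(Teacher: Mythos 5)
Your proof is correct and follows essentially the same route as the paper: project from $m-3$ general points to land on a $\overline X(3,3,3)\subset\p^7$, invoke Corollary \ref{class333}, and kill the two degree-$6$ cases by the degree/linear-normality count together with $\dim SX=7$ against Fujita's $\dim SX=6$ for normal $\Delta$-genus-one threefolds. The only difference is that you make explicit the final step the paper leaves implicit -- deducing the smooth scroll conclusion in case (a) from the minimal-degree classification (Theorem \ref{varmini}) plus the observation that a cone would project to a cone, contradicting the smoothness of $X_{m-3}$ -- which is a welcome addition rather than a deviation.
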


Also the classification of quadratic Cremona transformation   on $\p^3$  is known. By comparing Tableau 1 in \cite{PRV} and Table 2 above one obtains the following result.

\begin{thm}
\label{22p3} Let $\psi:\p^3\map\p^3$ be a Cremona transformation of bidegree $(2,2)$, not equivalent to a projective transformation. Then 
up to composition to the right and to the left by linear automorphisms, $\psi$ can be assumed to be one of the seven quadratic  involutions $x\mapsto x^\#$ defining the adjoint of one of   the seven cubic Jordan algebras on $\mathbb C^4$ described in Section  \ref{S:associativeALGEBRA4} 
  (see Table 2 for explicit formulae). 
\end{thm}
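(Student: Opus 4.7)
The plan is to combine two independent classifications, namely Tableau~1 of \cite{PRV} listing non-linear quadro-quadric Cremona transformations of $\mathbb P^3$ up to left--right composition by elements of $PGL_4$, and the list of rank three cubic Jordan algebra structures on $\mathbb C^4$ assembled in Table~2, and to verify that the two produce identical equivalence classes.

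First, I would appeal to the classification of \cite{PRV}, which distinguishes seven equivalence classes of bidegree $(2,2)$ Cremona transformations $\psi: \mathbb P^3 \dashrightarrow \mathbb P^3$ that are not projectively equivalent to a linear automorphism, the invariant used in \cite{PRV} being essentially the scheme structure of the base locus $B\subset \mathbb P^3$ of $\psi$ (and that of $\psi^{-1}$, which by Corollary~\ref{involutions} is projectively equivalent to $B$).

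Second, I would verify that each of the seven entries in Table~2, namely $A_6^+=A_6$, $A_7^+=A_7$, $A_8^+=A_8$, $A_{13}^+$, $A_{14}^+$, $\mathbb C\times \mathbb J'$ and $\mathbb J_*$, produces via its adjoint $x\mapsto x^\#$ a Cremona transformation of bidegree $(2,2)$ of $\mathbb P(\mathbb J)=\mathbb P^3$. The components of $x^\#$ are homogeneous of degree two by construction, while the standard identity $(x^\#)^\#=N(x)\,x$ in a cubic Jordan algebra (which follows from the minimum polynomial~\eqref{polymini} and multiplicativity of $N$, see \cite{FK}) shows that $x\mapsto x^\#$ is a birational involution up to the cubic factor $N(x)$, hence a Cremona transformation of type $(2,2)$.

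Third, I would match the seven Jordan algebras to the seven entries of Tableau~1 of \cite{PRV} one by one. For each algebra $\mathbb J$ in Table~2, the base locus of the associated Cremona map $x\mapsto x^\#$ is the scheme $V(x^\#)\subset\mathbb P^3$, whose components and embedded structure can be read off directly from the explicit formulae in Table~2; comparing these base loci with those tabulated in \cite{PRV} yields an explicit bijection. This step also shows that the seven Jordan algebras give seven pairwise non-equivalent Cremona transformations, because their base loci are pairwise non-isomorphic as subschemes of $\mathbb P^3$ (for instance the support, the number of irreducible components, and the multiplicity structure all vary from one algebra to the next).

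The main obstacle is the bookkeeping in this last matching step: one has to handle the cases in which the base locus of $x\mapsto x^\#$ is non-reduced or non-equidimensional and show that the corresponding scheme structure coincides with the one encoded in \cite{PRV}. Once this dictionary is established, the theorem follows because both classifications contain exactly seven classes, so the bijection between them is forced; the final statement that any non-linear quadro-quadric $\psi$ is left--right linearly equivalent to some $x\mapsto x^\#$ is simply a reformulation of this match.
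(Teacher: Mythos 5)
Your proposal is correct and follows essentially the same route as the paper, whose entire argument is the one-line instruction to compare Tableau~1 of \cite{PRV} with Table~2; you have merely spelled out the comparison (birationality of $x\mapsto x^\#$ via $(x^\#)^\#=N(x)\,x$, and the case-by-case matching of base loci). No gap.
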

\medskip

From Theorem \ref{22p3} , it follows immediately the

\begin{coro}\label{class433} Let $X=\overline X(4,3,3)\subset\p^9$. Then $X$ is projectively equivalent to one of the following varieties: 
\begin{enumerate}
 \item a smooth rational normal scroll of degree 6, that is $S_{1113}$ or $S_{1122}$; or
\item a cubic curve over one of the seven Jordan algebras in Table 2.
\end{enumerate}
\end{coro}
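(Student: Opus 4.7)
The plan is to combine Theorem \ref{Cremonascroll}, which canonically assigns to $X=\overline X(4,3,3)\subset\mathbb{P}^9$ a Cremona transformation $\psi_x:\mathbb{P}^3\map\mathbb{P}^3$ at a general point $x\in X$, with the classification of quadro-quadric Cremona transformations on $\mathbb{P}^3$ provided by Theorem \ref{22p3}. The strategy is formally identical to the one used in the proof of Corollary \ref{class333} for $r=2$.

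First, I would apply the equivalence (a) $\Leftrightarrow$ (b) of Theorem \ref{Cremonascroll}: if $\psi_x$ is equivalent (as a birational map) to a projective transformation, then $X$ is projectively equivalent to one of the smooth rational normal scrolls $S_{1113}$ or $S_{1122}$ of degree $6$, placing $X$ in case (1). Otherwise $\psi_x$ is a genuine Cremona transformation of bidegree $(2,2)$ on $\mathbb{P}^3$, and by Theorem \ref{22p3}, up to composition with linear automorphisms on both sides, $\psi_x$ coincides with the adjoint map $\mathbf{x}\mapsto\mathbf{x}^\#$ of one of the seven cubic Jordan algebras $\mathbb{J}$ on $\mathbb{C}^4$ tabulated in Table 2.

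To conclude, I would invoke Theorem \ref{Cremonascroll}(2), which asserts that the birational parametrization $\phi=\pi_T^{-1}:\mathbb{P}^4\map X$ is given by the complete linear system of cubic hypersurfaces having double points along $B'_x$. Combined with the explicit form \eqref{pargeneral} and the relation $\ell\circ\psi_x\circ\psi_x(\mathbf{x})=n(\mathbf{x})\mathbf{x}$ established in Corollary \ref{involutions}, the cubic polynomial $f$ appearing in \eqref{pargeneral} is forced (up to a scalar) to be the generic norm $N$ of $\mathbb{J}$, because that relation is a reformulation of the defining identity $\mathbf{x}\cdot\mathbf{x}^\#=N(\mathbf{x})\,e$ of the Jordan algebra. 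Hence $X$ admits the affine parametrization $(1:\mathbf{x}:\mathbf{x}^\#:N(\mathbf{x}))$ and is projectively equivalent to the twisted cubic $X^3_\mathbb{J}$, placing it in case (2).

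The main potential obstacle is the final matching step: making certain that the cubic $f$ appearing in \eqref{pargeneral} is really (up to scalar and coordinate change) the Jordan norm $N$, rather than some independent cubic form merely compatible with the chosen quadro-quadric $\psi_x$. This rigidity is exactly what the proof of Corollary \ref{involutions} provides: once $\psi_x$ has been normalized to an involution by the auxiliary linear map $\ell$, the cubic $n$ such that $\ell\circ\psi_x\circ\psi_x=n\cdot\mathrm{id}$ is determined uniquely, and its identification with $N$ is then automatic from the Jordan structure. Once this identification is in place, no further case analysis within Table 2 is required.
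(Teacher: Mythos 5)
Your proposal is correct and follows essentially the same route as the paper, whose own proof is simply the observation that the corollary "follows immediately" from Theorem \ref{22p3} combined with the machinery of Theorem \ref{Cremonascroll} and Corollary \ref{involutions} that you spell out explicitly, in exact parallel with the proof of Corollary \ref{class333}. One tiny imprecision: the relation $\ell\circ\varphi\circ\varphi(\boldsymbol x)=n(\boldsymbol x)\boldsymbol x$ is the avatar of the rank-three identity $(x^{\#})^{\#}=N(x)\,x$ rather than of $x\,x^{\#}=N(x)\,e$, but this does not affect your argument.
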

\medskip

\begin{rem}\label{classjordangeom}
{\rm

The varieties appearing in Theorem \ref{smooth33} are  also  particular examples
of smooth {\it Legendrian varieties}, see \cite{landsbergmanivel3} for definitions, some related results and references. They are also interesting examples of smooth  varieties with one apparent double
point defined before by Corollary \ref{oadp}, see also \cite{mukai} and \cite{cmr}.

The class of Cremona transformations $\psi:\p^r\map\p^r$ of type $(2,2)$ arising from two different birational projections from one point of an irreducible quadric hypersurface
$Q^r$ will be called {\it elementary quadratic transformations}, that is if $\pi_1:Q^r\map \p^r$ and if $\pi_2:Q^r\map\p^r$ are the two projections we have $\psi=\pi_2\circ\pi_1^{-1}$.
The well known classification of plane quadratic Cremona transformations and the results of Theorem \ref{22p3} say that for $r\leq 3$ every Cremona transformation
of type $(2,2)$ is an elementary quadratic transformation. These examples yield varieties  $\overline X(r+1,3,3)\subset\p^{2r+3}$, not rational normal scrolls, which are either
the Segre embedding of $\p^1\times Q^r$ or projective degenerations of them when some base point becomes infinitely near.

There is an interesting  approach to
Jordan algebras developed by T. A. Springer \cite[\S 1.27]{springer} and based on {\it $j$--structures} and indirectly also on
the so called {\it Hua's identities}, see \cite{mccrimmon}.
These results and our geometrical treatment yield the following consequence, probably well known to the experts: two twisted cubic curves over Jordan algebras
$X_{\mathbb J_i}\subset\p^{2r+3}$, $i=1,2$, are projectively equivalent if and only if  $\mathbb J_1$ and $\mathbb J_2$ are isomorphic Jordan algebras.

In particular in Theorem \ref{smooth33}, Corollary \ref{class333} and Corollary \ref{class433} we obtained geometrical proofs of the classification of all cubic Jordan algebras $\mathbb J$ such that the associated 
twisted cubic $X_{\mathbb J}\subset\p^{2r+3}$ is respectively smooth, of dimension 3, of dimension 4.

Based on the results of Theorem \ref{Cremonascroll}, of the construction in the proof of Corollary \ref{involutions}, of Theorem \ref{smooth33}, of Corollary \ref{class333} and of Corollary \ref{class433}, one could 
ask the following question: 
\begin{quote}
{\it Is a $\overline X(r+1,3,3)\subset\p^{2r+3}$ not of Castelnuovo type projectively equivalent to a twisted cubic $X_{\mathbb J}\subset\p^{2r+3}$
for some cubic Jordan algebra $\mathbb J$ of dimension $r+1$?} 
\end{quote}

We conjecture that the answer to the previous question is affirmative.
In other terms, we conjecture that the following a priori unrelated mathematical objects  in fact coincide: 
\begin{itemize}
\item  varieties $\overline{X}(r+1,3,3)\subset \mathbb P^{2r+3}$, up to projective equivalence;
\item  cubic Jordan algebras of dimension $r+1$, up to isomorphism;
\item  quadro-quadratic Cremona transformations of $\mathbb P^r$, up to linear equivalence.
\end{itemize}

\smallskip

 In \cite{brunoverra}, see also \cite{Semple} and \cite{Verra},  a classification of quadro-quadric Cremona transformations in $\p^4$ is obtained. This immediately yields also the classification of $\overline X(5,3,3)\subset\p^{11}$ and provides
 an affirmative answer to the above conjecture also for $r=4$. We refrain
from listing this classification and we shall probably come back in a future paper, \cite{PR2}, on the beautiful relations between the above  apparently unrelated  objects,
trying to develop further the classification results and the connections among these areas.}
\end{rem}

\end{document}